\begin{document}

\newtheorem{theorem}{Theorem}
\newtheorem{proposition}[theorem]{Proposition}
\newtheorem{conjecture}[theorem]{Conjecture}
\def\theconjecture{\unskip}
\newtheorem{corollary}[theorem]{Corollary}
\newtheorem{lemma}[theorem]{Lemma}
\newtheorem{sublemma}[theorem]{Sublemma}
\newtheorem{fact}[theorem]{Fact}
\newtheorem{observation}[theorem]{Observation}
\theoremstyle{definition}
\newtheorem{definition}{Definition}
\newtheorem{notation}[definition]{Notation}
\newtheorem{remark}[definition]{Remark}
\newtheorem{question}[definition]{Question}
\newtheorem{questions}[definition]{Questions}
\newtheorem{example}[definition]{Example}
\newtheorem{problem}[definition]{Problem}
\newtheorem{exercise}[definition]{Exercise}



\def\reals{{\mathbb R}}
\def\torus{{\mathbb T}}
\def\heis{{\mathbb H}}
\def\integers{{\mathbb Z}}
\def\rationals{{\mathbb Q}}
\def\naturals{{\mathbb N}}
\def\complex{{\mathbb C}\/}
\def\distance{\operatorname{distance}\,}
\def\support{\operatorname{support}\,}
\def\dist{\operatorname{dist}}
\def\Dist{\operatorname{Distance}}
\def\Span{\operatorname{span}\,}
\def\degree{\operatorname{degree}\,}
\def\kernel{\operatorname{kernel}\,}
\def\dim{\operatorname{dim}\,}
\def\codim{\operatorname{codim}}
\def\trace{\operatorname{trace\,}}
\def\Span{\operatorname{span}\,}
\def\dimension{\operatorname{dimension}\,}
\def\codimension{\operatorname{codimension}\,}
\def\nullspace{\scriptk}
\def\kernel{\operatorname{Ker}}
\def\ZZ{ {\mathbb Z} }
\def\p{\partial}
\def\rp{{ ^{-1} }}
\def\Re{\operatorname{Re\,} }
\def\Im{\operatorname{Im\,} }
\def\ov{\overline}
\def\eps{\varepsilon}
\def\lt{L^2}
\def\diver{\operatorname{div}}
\def\curl{\operatorname{curl}}
\def\etta{\eta}
\newcommand{\norm}[1]{ \|  #1 \|}
\def\expect{\mathbb E}
\def\bull{$\bullet$\ }
\def\det{\operatorname{det}}
\def\Det{\operatorname{Det}}
\def\multiR{\mathbf R}
\def\bestA{\mathbf A}
\def\Apq{\mathbf A_{p,q}}
\def\Apqr{\mathbf A_{p,q,r}}
\def\bestC{\mathbf C}
\def\bestAqd{\mathbf A_{q,d}}
\def\bestBqd{\mathbf B_{q,d}}
\def\bestB{\mathbf B}
\def\Apq{\mathbf A_{p,q}}
\def\Apqr{\mathbf A_{p,q,r}}
\def\rank{\mathbf r}
\def\diameter{\operatorname{diameter}}
\def\bp{\mathbf p}
\def\bff{\mathbf f}
\def\bx{\mathbf x}
\def\bu{\mathbf u}
\def\bg{\mathbf g}
\def\essd{\operatorname{essential\ diameter}}

\def\symdif{\,\Delta\,}
\def\frakE{{\mathfrak E}}
\def\frakI{{\mathfrak I}}
\def\defe{\dist(E,\frakE)}
\def\defi{\dist(E,\frakI)}

\def\mab{\max(|A|,|B|)}
\def\t2{\tfrac12}
\def\tatb{tA+(1-t)B}
\def\divergence{\operatorname{div}}
\def\steiner{\bigstar}

\newcommand{\abr}[1]{ \langle  #1 \rangle}

\newcommand{\Norm}[1]{ \Big\|  #1 \Big\| }
\newcommand{\set}[1]{ \left\{ #1 \right\} }
\def\one{{\mathbf 1}}
\newcommand{\modulo}[2]{[#1]_{#2}}

\def\scriptf{{\mathcal F}}
\def\scriptq{{\mathcal Q}}
\def\scriptg{{\mathcal G}}
\def\scriptm{{\mathcal M}}
\def\scriptb{{\mathcal B}}
\def\scriptc{{\mathcal C}}
\def\scriptt{{\mathcal T}}
\def\scripti{{\mathcal I}}
\def\scripte{{\mathcal E}}
\def\scriptv{{\mathcal V}}
\def\scriptw{{\mathcal W}}
\def\scriptu{{\mathcal U}}
\def\scriptS{{\mathcal S}}
\def\scripta{{\mathcal A}}
\def\scriptr{{\mathcal R}}
\def\scripto{{\mathcal O}}
\def\scripth{{\mathcal H}}
\def\scriptd{{\mathcal D}}
\def\scriptl{{\mathcal L}}
\def\scriptn{{\mathcal N}}
\def\scriptp{{\mathcal P}}
\def\scriptk{{\mathcal K}}
\def\scriptP{{\mathcal P}}
\def\scriptj{{\mathcal J}}
\def\scriptz{{\mathcal Z}}
\def\scripts{{\mathcal S}}
\def\frakv{{\mathfrak V}}
\def\frakG{{\mathfrak G}}
\def\aff{\operatorname{Aff}}
\def\frakB{{\mathfrak B}}
\def\frakC{{\mathfrak C}}
\def\aA{{\mathfrak A}}

\def\aff{\operatorname{Aff}}
\def\bestC{{\mathbf C}}

\def\bE{{\mathbf E}}
\def\bF{{\mathbf F}}
\def\bG{{\mathbf G}}
\def\Psharp{P^\sharp}
\def\bA{{\mathbf A}}
\def\bI{{\mathbf I}}
\def\bEstar{{\mathbf E}^\star}
\def\bAstar{{\mathbf A}^\star}
\def\be{{\mathbf e}}
\def\bv{{\mathbf v}}
\def\bw{{\mathbf w}}
\def\br{{\mathbf r}}
\def\Star{\star}
\def\unitQ{{\mathbf Q}}
\def\Gl{\operatorname{Gl}}

\def\Gjo{G_{\text{$j$,o}}}
\def\Gje{G_{\text{$j$,e}}}

\def\ball{\mathbb B}

\def\barrier{\medskip\hrule\hrule\medskip}

\def\bb{\mathbb B}
\def\br{\mathbf{r}}
\def\bt{\mathbf{t}}
\def\sstar{{\dagger\star}}
\def\Star{{\bullet}}
\def\aff{\operatorname{Aff}}
\def\gl{\operatorname{Gl}}
\def\baff{\operatorname{\bf Aff}}
\newcommand{\mbf}[1]{\mathbf #1}
\def\T{{\mathcal T}}

\def\defb{|E\symdif \bb|}

\author{Michael Christ}

\address{
        Michael Christ\\
        Department of Mathematics\\
        University of California \\
        Berkeley, CA 94720-3840, USA}
\email{mchrist@berkeley.edu}
\thanks{Research supported in part by NSF grant DMS-1363324}

\date{January 29, 2017. Edited April 29, 2017.} 

\title{A sharpened Riesz-Sobolev inequality}

\begin{abstract}
The Riesz-Sobolev inequality provides an upper bound, in integral form, for the convolution of indicator functions of subsets of Euclidean space. 
We formulate and prove a sharper form of the inequality. This can be equivalently phrased as a stability result,
quantifying an inverse theorem of Burchard that characterizes cases of equality.
\end{abstract}

\maketitle



\section{Introduction}
\subsection{The Riesz-Sobolev inequality}
Let $\bE=(E_1,E_2,E_3)$ be an ordered triple of Lebesgue measurable subsets of $\reals^d$
with finite Lebesgue measures.
The Riesz-Sobolev inequality \cite{riesz},\cite{sobolev} states that
\begin{equation} \label{eq:RS1}
\int_{E_3} \one_{E_1}*\one_{E_2}
\le \int_{E_3^\star} \one_{E_1^\star}*\one_{E_2^\star}
\end{equation}
where $E^\star$ denotes the closed ball, centered at the origin, that satisfies $|E^\star|=|E|$,
$*$ denotes convolution of functions,
and $\one_E$ denotes the indicator function $\one_E(x)=1$ if $x\in E$ and $=0$ if $x\notin E$.
This can be read both as an upper bound for 
$\int_{E_3} \one_{E_1}*\one_{E_2}$ as $E_j$ vary over all sets of prescribed measures,
and as a statement that this upper bound is attained by $\bEstar = (E_1^\star,E_2^\star,E_3^\star)$.

Burchard \cite{burchard} characterized those triples $\bE$ that realize equality  in \eqref{eq:RS1}.
Such a characterization must take into account two features of the inequality, namely affine invariance
and the concept of admissibility.
Affine invariance holds in the sense that if $\psi:\reals^d\to\reals^d$ is a measure-preserving
linear transformation and $\bv = (v_1,v_2,v_3)\in (\reals^d)^3$ satisfies $v_3=v_1+v_2$
then the sets $\tilde E_j = \psi(E_j)+v_j$ satisfy
$\int_{\tilde E_3} \one_{\tilde E_1}*\one_{\tilde E_2}
= \int_{E_3} \one_{E_1}*\one_{E_2}$ and $|\tilde E_j|=|E_j|$.
For arbitrary $\psi\in\Gl(d)$, 
$\int_{\tilde E_3} \one_{\tilde E_1}*\one_{\tilde E_2}
= |\det(\psi)|^2 \int_{E_3} \one_{E_1}*\one_{E_2}$. 

An ordered triple $\br = (r_1,r_2,r_3)$ of positive real numbers is said to be admissible if
$r_k\le r_i+r_j$ for all permutations $(i,j,k)$ of $(1,2,3)$,
and to be strictly admissible if $r_k < r_i+r_j$ for all permutations. 
An ordered triple $\bE$ of measurable subsets of $\reals^d$ 
is said to be admissible (respectively strictly admissible)
if $(|E_j|^{1/d}: 1\le j\le 3)$ is admissible (respectively strictly admissible).
Burchard's theorem states that if $\bE$ is strictly admissible and 
realizes equality in the Riesz-Sobolev inequality, then there exist a measure-preserving
linear transformation $\psi$ and $\bv\in (\reals^d)^3$ satisfying $v_3=v_1+v_2$
such that $E_j = \psi(E_j^\star)+v_j$ for all $j\in\{1,2,3\}$.
In particular, the sets $E_j$ are mutually homothetic ellipsoids.
Here, and throughout this paper, two sets are regarded as equivalent if their symmetric difference
is a Lebesgue null set. 

In the borderline admissible but not strictly admissible case, equality holds if and only if
the sets $E_j$ are (equivalent to) suitably translated mutually homothetic convex sets \cite{burchard}. 
This is equivalent to the well-known characterization of equality in the Brunn-Minkowski inequality.
This borderline case will not be discusssed in the present paper.

In order to state our main result we need the following notion of distance
from $\bE$ to the best approximating ordered triple of compatibly translated homothetic ellipsoids
of appropriate Lebesgue measures.
\begin{definition}
Let $\bE=(E_1,E_2,E_3)$ and $\bF=(F_1,F_2,F_3)$  be ordered triples of Lebesgue measurable
subsets of $\reals^d$ with $|E_j|,|F_j|<\infty$ for each index $j$.
The distance from $\bE$ to the orbit of $\bF$ is
\begin{equation}
\Dist(\bE,\scripto(\bF)) = \inf_{\psi,\bv}
\max_{j\in\{1,2,3\}} |E_j\symdif (\psi(F_j)+v_j)|
\end{equation}
where the infimum is taken over all $\bv=(v_1,v_2,v_3)\in(\reals^d)^3$
satisfying $v_3 = v_1+v_2$
and over all Lebesgue measure--preserving invertible linear automorphisms $\psi$ of $\reals^d$.
\end{definition}

We will be especially interested in $\Dist(\bE,\scripto(\bEstar))$.
It is elementary that this quantity vanishes if and only if
there exist $\psi,\bv$, with $\psi$ measure-preserving and $\bv$ satisfying $v_3=v_1+v_2$, 
such that $E_j = \psi(E_j^\star)+v_j$ for all $j\in\{1,2,3\}$.

We also require a quantitative concept of strict admissibility.
\begin{definition}
Let $\rho>0$. 
An ordered triple $\br$ of positive real numbers is $\rho$--strictly admissible
if $r_k\le (1-\rho)(r_i+r_j)$ for all permutations $(i,j,k)$ of $(1,2,3)$,
and $\min(r_1,r_2,r_3) \ge\rho\max(r_1,r_2,r_3)$.

An ordered triple $\bE$ of Lebesgue measurable subsets of $\reals^d$ with positive,
finite Lebesgue measures is $\rho$--strictly admissible if $(|E_j|^{1/d}: 1\le j\le 3)$
is a $\rho$--strictly admissible triple of positive real numbers.
\end{definition}

Our main result is:
\begin{theorem} \label{thm:RSsharpened}
For each $d\ge 1$ and each $\rho>0$ there exists $c>0$ such that
for each $\rho$--strictly admissible ordered triple $\bE$
of Lebesgue measurable subsets of $\reals^d$,
\begin{equation} \label{eq:RSsharpened}
\int_{E_3} \one_{E_1}*\one_{E_2}
\le \int_{E_3^\star} \one_{E_1^\star}*\one_{E_2^\star}
- c \Dist(\bE,\scripto(\bEstar))^2.  \end{equation}
\end{theorem}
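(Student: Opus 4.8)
The plan is to prove the quantitative bound \eqref{eq:RSsharpened} by combining a compactness/contradiction argument with a careful local analysis near the extremizers. First I would normalize: by the affine invariance discussed above, we may rescale so that $|E_j|=|E_j^\star|$ are fixed and so that the optimizing $\psi$ in $\Dist(\bE,\scripto(\bEstar))$ is the identity, reducing matters to comparing $\bE$ with the fixed triple of balls $\bEstar$. Write $\delta(\bE) = \int_{E_3^\star}\one_{E_1^\star}*\one_{E_2^\star} - \int_{E_3}\one_{E_1}*\one_{E_2} \ge 0$ for the Riesz--Sobolev deficit. The goal is $\delta(\bE)\ge c\,\Dist(\bE,\scripto(\bEstar))^2$, uniformly over $\rho$--strictly admissible triples.

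The strategy I would pursue has two regimes. In the \emph{far regime}, where $\Dist(\bE,\scripto(\bEstar))\ge\eta$ for a small fixed $\eta=\eta(d,\rho)$, one argues by compactness: among $\rho$--strictly admissible triples of sets of given measures sitting in a fixed large ball (one first reduces to bounded sets, since faraway mass only decreases the convolution integral), the deficit functional $\delta$ is lower semicontinuous and vanishes only on the orbit $\scripto(\bEstar)$ by Burchard's theorem; hence $\inf\{\delta(\bE): \Dist(\bE,\scripto(\bEstar))\ge\eta\}>0$, giving the bound with a possibly small constant in this regime. The delicate part is the \emph{near regime}, $\Dist(\bE,\scripto(\bEstar))<\eta$. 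Here I would parametrize a perturbation of $\bEstar$: each $E_j$ is, up to a null set, close to the ball $B_j=E_j^\star$, and one wants to expand $\delta$ to second order in the perturbation. The natural device is to fix the optimal affine normalization and then study the symmetric differences $E_j\symdif B_j$; one expects a second-order Taylor-type lower bound $\delta(\bE)\gtrsim \sum_j |E_j\symdif B_j|^2$ minus cross terms, with the affine degrees of freedom (the $d^2-1$ parameters in $\psi$ and the $2d$ translation parameters, minus those that act trivially) accounting exactly for the directions in which $\delta$ is degenerate to second order — which is why the distance is measured modulo the orbit $\scripto(\bEstar)$.

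To carry out the near-regime estimate I would first establish, by a separate argument, that the deficit controls a \emph{one-dimensional} or \emph{lower-dimensional} reduction: slicing in a generic direction and using the known sharp one-dimensional Riesz--Sobolev / Brunn--Minkowski stability, together with an integral-geometric (Steiner-symmetrization) bound, to show that small deficit forces each $E_j$ to be close to \emph{some} ellipsoid, and that these ellipsoids are nearly homothetic and nearly compatibly centered — i.e. $\Dist(\bE,\scripto(\bEstar))$ is small, say $\le C\delta(\bE)^{1/2}$ or at least $\to 0$ as $\delta\to 0$. This is the qualitative input that lets the compactness argument run and localizes us near the orbit. Then, on the localized neighborhood, the quadratic lower bound is obtained by expanding the bilinear form $\bE\mapsto \int_{E_3}\one_{E_1}*\one_{E_2}$ around $\bEstar$: writing $E_j = (B_j\setminus A_j^-)\cup A_j^+$ with $A_j^\pm$ small, the first-order term vanishes after optimizing over the affine orbit (this is where the variational characterization of balls as extremizers enters), and the Hessian is negative semidefinite with kernel exactly the orbit directions, so it is negative definite transverse to the orbit, yielding $\delta(\bE)\ge c\sum_j|E_j\symdif B_j|^2 \ge c'\Dist(\bE,\scripto(\bEstar))^2$.

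The main obstacle, I expect, is the near-regime Hessian analysis: one must show the second variation of the Riesz--Sobolev functional is strictly negative definite \emph{transverse to the full affine orbit}, uniformly in $\rho$, and one must handle the fact that the perturbations $A_j^\pm$ are sets rather than smooth functions (so "second order in the perturbation" must be interpreted as second order in $|E_j\symdif B_j|$, which requires quantitative control of how the convolution integral depends on the \emph{shape}, not just the measure, of the symmetric difference — e.g. via the coarea formula and the strict convexity of the boundary sphere). Controlling the uniformity as $\rho\to 0$, where the extremal configuration degenerates, and patching the far and near regimes so the constant $c=c(d,\rho)$ is genuinely uniform, will require some care but should follow once the transversal nondegeneracy is in hand.
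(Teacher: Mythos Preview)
Your two-regime architecture is sound, and your near-regime intuition---that the second variation of the Riesz--Sobolev functional at $\bEstar$ is negative semidefinite with kernel exactly the affine orbit directions---is precisely what the paper establishes. But there are two genuine gaps.

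First, the far-regime compactness argument does not work as written. The collection of measurable subsets of a fixed ball with prescribed measure is not compact in any topology in which $\scriptt$ is continuous: strong $L^1$ is not compact, and under weak-$*$ convergence in $L^\infty$ the limits of indicator functions need not be indicators (and the trilinear form behaves badly). The paper's device for this reduction is entirely different and worth learning: one constructs a continuous flow $t\mapsto\bE(t)$ with $\bE(0)=\bE$, $\bE(1)=\bEstar$, along which $\scriptt(\bE(t))$ is nondecreasing and $t\mapsto\dist(\bE(t),\scripto(\bEstar))$ is continuous. Given arbitrary $\bE$ at distance $>\delta_0$, flow until the first time the distance equals $\delta_0$, apply the perturbative result there, and transfer back by monotonicity. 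No compactness is needed; the flow does the work.

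Second, you underestimate the difficulty of the Hessian nondegeneracy. The paper's second-order expansion (Proposition~\ref{prop:RSperturbative}) has the form
\[
\scriptt(\bE)\le\scriptt(\bEstar)-\tfrac12\sum_k\gamma_k r_k^{1-d}\big(\|F_k^+\|_{L^2}^2+\|F_k^-\|_{L^2}^2\big)+\scriptq(F_1,F_2,F_3)+O(\delta^3),
\]
and the crux is whether the quadratic form $\scriptq$ is strictly dominated by the diagonal term. The paper explicitly observes that the optimal constant in $\scriptq(F_1,F_2,F_3)\le A\sum_k\gamma_k r_k^{1-d}\|F_k\|^2$ is \emph{not} less than $\tfrac12$, so a direct spectral bound fails. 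One must first use affine balancing to kill the degree-$1$ and degree-$2$ spherical harmonic components (your ``optimal affine normalization'' step, made precise in Lemmas~\ref{lemma:centering}--\ref{lemma:degree1balanced}), and then---this is the part you would not guess---prove the strict inequality on harmonics of each degree $n\ge 3$ not by computing eigenvalues, but by an indirect argument: build explicit perturbations $\bE_\bG(s)$ from any triple $\bG$ of degree-$n$ harmonics, slice in a direction, apply the elementary one-dimensional Riesz--Sobolev inequality with a center-mismatch gain on each slice, and show that for $n\ge 3$ some rotate of $\bG$ produces a nontrivial center-mismatch polynomial on the slice hyperplane (Lemmas~\ref{lemma:quadgain}--\ref{lemma:algebra:atlast2}). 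Your sketch of ``coarea formula and strict convexity of the boundary sphere'' does not capture this mechanism.
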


The exponent $2$ is optimal.
This bound does not hold in the borderline admissible case.

Theorem~\ref{thm:RSsharpened} can also be read as a characterization of those triples $\bE$
that nearly extremize the Riesz-Sobolev functional. 

\begin{theorem}  \label{thm:inverse}
For each $d\ge 1$ and each $\rho>0$ there exists $c>0$ 
such that for any $\delta>0$ and any
$\rho$--strictly admissible ordered triple $\bE$
of Lebesgue measurable subsets of $\reals^d$,
if 
\[\int_{E_3} \one_{E_1}*\one_{E_2} \ge 
(1-\delta) \int_{E_3^\star} \one_{E_1^\star}*\one_{E_2^\star}\]
then
\[ \Dist(\bE,\scripto(\bEstar))\le C\delta^{1/2} \max_j |E_j|.\]
\end{theorem}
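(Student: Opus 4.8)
The plan is to deduce Theorem~\ref{thm:inverse} directly from Theorem~\ref{thm:RSsharpened}; the two statements are essentially equivalent reformulations, and the passage from the sharpened inequality to the inverse (stability) form requires only a crude upper bound on the extremal value of the Riesz--Sobolev functional.

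\emph{Step 1: Combine the two inequalities.} Fix $d$ and $\rho$, and let $c>0$ be the constant furnished by Theorem~\ref{thm:RSsharpened} for this pair. Given a $\rho$--strictly admissible triple $\bE$ satisfying the hypothesis
\[ \int_{E_3} \one_{E_1}*\one_{E_2} \ge (1-\delta)\int_{E_3^\star}\one_{E_1^\star}*\one_{E_2^\star}, \]
subtract this lower bound from the upper bound \eqref{eq:RSsharpened}. Since both estimates concern the same quantity $\int_{E_3}\one_{E_1}*\one_{E_2}$, this yields
\[ c\,\Dist(\bE,\scripto(\bEstar))^2 \le \delta\int_{E_3^\star}\one_{E_1^\star}*\one_{E_2^\star}. \]

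\emph{Step 2: Bound the extremal value.} Because $\one_{E_3^\star}\le 1$ pointwise and $\one_{E_1^\star}*\one_{E_2^\star}\ge 0$ everywhere,
\[ \int_{E_3^\star}\one_{E_1^\star}*\one_{E_2^\star} \le \int_{\reals^d}\one_{E_1^\star}*\one_{E_2^\star} = |E_1^\star|\,|E_2^\star| = |E_1|\,|E_2| \le \big(\max_j |E_j|\big)^2. \]
Inserting this into the inequality from Step 1 and taking square roots gives $\Dist(\bE,\scripto(\bEstar)) \le c^{-1/2}\delta^{1/2}\max_j|E_j|$, which is the asserted conclusion with $C=c^{-1/2}$, a constant depending only on $d$ and $\rho$.

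Since the only nontrivial input is Theorem~\ref{thm:RSsharpened} itself, there is essentially no obstacle at this stage: the content of Theorem~\ref{thm:inverse} is contained in Theorem~\ref{thm:RSsharpened}, and conversely one recovers \eqref{eq:RSsharpened} from the inverse form. The genuine difficulty — which is precisely the difficulty of proving Theorem~\ref{thm:RSsharpened} — lies in showing that a near-extremizer must be close, in symmetric-difference distance, to a compatibly translated triple of homothetic ellipsoids, with a deficit controlled quadratically; that requires propagating a near-equality through a rearrangement/layer-cake argument across all scales and all three sets simultaneously, uniformly under the $\rho$--strict admissibility hypothesis.
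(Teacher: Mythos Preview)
Your proposal is correct and matches the paper's approach: the paper presents Theorem~\ref{thm:inverse} as a direct restatement of Theorem~\ref{thm:RSsharpened} without supplying a separate proof, and the derivation you give (combining \eqref{eq:RSsharpened} with the near-equality hypothesis and the trivial bound $\int_{E_3^\star}\one_{E_1^\star}*\one_{E_2^\star}\le (\max_j|E_j|)^2$) is exactly the intended, essentially one-line, passage between the two formulations.
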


The exponent $\tfrac12$ is optimal.
Burchard's theorem, in the strictly admissible range, is a corollary.

A sharper version, treating the dependence on $\rho$ more quantitatively,
was established for $d=1$ in \cite{christRS3}. This improved on a weaker version 
\cite{christrieszsobolev},
whose main hypothesis was that $\int_{E_{3,i}} \one_{E_1}*\one_{E_2}$
should be nearly maximal for sets $E_{3,1}$ and $E_{3,2}$ with $|E_{3,2}|/|E_{3,1}|$
nearly equal to an odd integer.
This weaker result was still sufficient to serve as the key ingredient in a characterization
of near-maximizers for
Young's convolution inequality in \cite{christyoungest}.\footnote{\cite{christyoungest}
was subsequently revised to incorporate the simpler formulation in \cite{christRS3}.}
The proof was quite different from the method
developed below, relying on a result from
additive combinatorics concerning sets whose sumsets have nearly minimal size, adapted
from the discrete case to the continuum context. For $d>1$, a less quantative form 
\[ \int_{E_3} \one_{E_1}*\one_{E_2}
\le \int_{E_3^\star} \one_{E_1^\star}*\one_{E_2^\star}
\ -\   \Theta\Big(\frac{\Dist(\bE,\scripto(\bEstar))}{\max_j|E_j|} \Big)\cdot \max_j |E_j|^2 \]
of \eqref{eq:RSsharpened}
was established in \cite{christRShigher}, with an unspecified function $\Theta$ that vanishes only at $0$.

\subsection{A variant inequality}
The following inequality for subsets of $\reals^d$ is closely related to the Riesz-Sobolev inequality.
For $d=1$ it is discussed in \cite{christRS3}. A proof is included below.

\begin{theorem} \label{thm:RSvariant}
For any $d\ge 1$ and any Lebesgue measurable sets $E_j\subset\reals^d$ with finite Lebesgue measures,
for any $\tau>0$,
\begin{equation}\label{eq:variantRS}
\int_{\reals^d} \min(\one_{E_1}*\one_{E_2},\tau) 
\ge \int_{\reals^d} \min(\one_{E_1^\star}*\one_{E_2^\star},\tau). 
\end{equation}
\end{theorem}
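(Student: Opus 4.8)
The plan is to deduce \eqref{eq:variantRS} directly from the Riesz--Sobolev inequality \eqref{eq:RS1} by rewriting the truncation in terms of the functional $g\mapsto\int\max(g-\tau,0)$ and then expressing that functional variationally. Write $f=\one_{E_1}*\one_{E_2}$ and $f^\star=\one_{E_1^\star}*\one_{E_2^\star}$. Since $E_1,E_2$ have finite measure, $\int_{\reals^d}f=|E_1|\,|E_2|=|E_1^\star|\,|E_2^\star|=\int_{\reals^d}f^\star<\infty$, and Chebyshev's inequality shows that $\{f>t\}$ has finite measure for each $t>0$. Using the pointwise identity $\min(a,\tau)=a-\max(a-\tau,0)$ for $a\ge0$ and subtracting the equal finite quantities $\int_{\reals^d}f=\int_{\reals^d}f^\star$, the inequality \eqref{eq:variantRS} becomes equivalent to
\[ \int_{\reals^d}\max(f-\tau,0)\ \le\ \int_{\reals^d}\max(f^\star-\tau,0). \]

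Next I would record the elementary ``bathtub'' identity: for any nonnegative $g\in L^1(\reals^d)$ and any $\tau>0$,
\[ \int_{\reals^d}\max(g-\tau,0)=\sup\set{\,\textstyle\int_E g-\tau|E|\ :\ E\subset\reals^d\text{ measurable},\ |E|<\infty\,}, \]
the supremum being attained at $E=\{g>\tau\}$, which has finite measure; the inequality ``$\ge$'' is immediate from $\int_E(g-\tau)\le\int_E\max(g-\tau,0)\le\int_{\reals^d}\max(g-\tau,0)$. Applying this first to $g=f$ with the extremizing choice $E_3=\{f>\tau\}$, then the Riesz--Sobolev inequality \eqref{eq:RS1} with third set $E_3$ (noting $|E_3^\star|=|E_3|$), and finally the identity again for $g=f^\star$, yields
\[ \int_{\reals^d}\max(f-\tau,0)=\int_{E_3}f-\tau|E_3|\ \le\ \int_{E_3^\star}f^\star-\tau|E_3^\star|\ \le\ \sup_E\Big(\int_E f^\star-\tau|E|\Big)=\int_{\reals^d}\max(f^\star-\tau,0), \]
which is exactly the reduced inequality.

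Essentially all the content is imported from \eqref{eq:RS1}; the only points requiring any care are the finiteness statements (handled by Chebyshev), the bathtub identity, and the observation that the supremum defining $\int\max(f^\star-\tau,0)$ ranges over all finite-measure sets, not merely centered balls, so that the displayed chain closes. I do not anticipate a genuine obstacle. It is worth noting that radial symmetry of $f^\star$ is never used, only that $|E_3^\star|=|E_3|$ and that $f^\star$ is the Riesz--Sobolev comparison function for $f$; running the same argument at every height shows that $f^\star$ majorizes $f$ in the Hardy--Littlewood--P\'olya sense, whence \eqref{eq:variantRS} would also follow from concavity of $a\mapsto\min(a,\tau)$, but the direct argument above is shorter and avoids invoking majorization theory.
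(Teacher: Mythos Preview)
Your argument is correct and is in fact a cleaner route than the one the paper takes. The paper passes to the decreasing rearrangements $F_0,F_1:\reals^+\to[0,\infty)$ of $f$ and $f^\star$, invokes Riesz--Sobolev at every level $x$ to obtain $\int_0^x F_0\le\int_0^x F_1$, flips this via $\int F_0=\int F_1$ to the tail majorization $\int_x^\infty F_0\ge\int_x^\infty F_1$, and then applies a separately proved lemma (established by differentiating along the linear interpolation $tF_1+(1-t)F_0$) to the effect that such majorization forces $\int\min(F_0,\tau)\ge\int\min(F_1,\tau)$. This is exactly the majorization/concavity route you mention in your closing remark as an alternative.

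Your direct variational argument bypasses both the auxiliary lemma and the passage to one-dimensional rearrangements: a single application of Riesz--Sobolev at the specific threshold $\tau$, together with the bathtub identity, suffices. The paper's approach has the slight advantage of establishing full Hardy--Littlewood--P\'olya majorization of $f^\star$ over $f$ (hence the inequality for every concave $\Phi$ in place of $\min(\cdot,\tau)$), and the tail-majorization lemma is reused later in the paper in the proof of monotonicity of the variant functional under the flow. But for Theorem~\ref{thm:RSvariant} itself your proof is shorter and entirely self-contained.
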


A sharpened form, parallel to Theorem~\ref{thm:RSsharpened}, is as follows.
Define 
$\Dist((A,B),\scripto(A^\star,B^\star))$ to be the infimum, over all Lebesgue measure-preserving linear
linear transformations $\psi$ of $\reals^d$ and over all $(u,v)\in (\reals^d)^2$, 
of $\max\big(|A\symdif (\psi(A^\star)+u)|,\,|B\symdif (\psi(B^\star)+v)|\big)$. 

\begin{theorem} \label{thm:variantinverse}
For any $d\ge 1$, any compact set $\Lambda\subset(0,1)$, and any $\rho>0$
there exists $c>0$ such that for any  Lebesgue measurable sets $E_j\subset\reals^d$ with finite,
positive Lebesgue measures
satisfying $\min(|E_1|,|E_2|)\ge\rho \max(|E_1|,|E_2|)$
and any $\tau\in\reals^+$ such that $\tau/\min(|E_1|,|E_2|)\in\Lambda$,
\begin{equation} 
\int_{\reals^d} \min(\one_{E_1}*\one_{E_2},\tau) 
\ge \int_{\reals^d} \min(\one_{E_1^\star}*\one_{E_2^\star},\tau) 
+ c\Dist((E_1,E_2),\scripto(E_1^\star,E_2^\star))^2.
\end{equation}
\end{theorem}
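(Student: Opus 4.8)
The plan is to reduce the assertion to Theorem~\ref{thm:RSsharpened} by the bathtub (layer‑cake) principle. First I would normalize: under the dilation $E_j\mapsto rE_j$, $\tau\mapsto r^d\tau$ each of the three quantities in the claimed inequality is multiplied by $r^{2d}$ while all hypotheses persist, so we may assume $\max_j|E_j|=1$; then $\rho\le|E_j|\le1$ and $\lambda_{\min}\rho\le\tau\le\lambda_{\max}$, where $\Lambda\subseteq[\lambda_{\min},\lambda_{\max}]$ with $0<\lambda_{\min}\le\lambda_{\max}<1$. Put $g=\one_{E_1}*\one_{E_2}$ and $h=\one_{E_1^\star}*\one_{E_2^\star}$; both integrate to $V:=|E_1||E_2|$, and since $\min(\phi,\tau)=\phi-(\phi-\tau)_+$ the quantity to be bounded below, $\delta:=\int\min(g,\tau)-\int\min(h,\tau)$, equals $\int(h-\tau)_+-\int(g-\tau)_+$.

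Next comes the bathtub step. Let $A=\{g>\tau\}$, $B=\{h>\tau\}$, and let $A^\star$ be the centered ball with $|A^\star|=|A|$; recall $B$ is a centered ball and $h$ is radially nonincreasing. Writing $\int(h-\tau)_+=\int_B h-\tau|B|$ and $\int(g-\tau)_+=\int_A g-\tau|A|$, inserting the Riesz--Sobolev inequality $\int_A g\le\int_{A^\star}h$ (an instance of \eqref{eq:RS1} applied to $E_1,E_2,A$) in the form $\int_A g=\int_{A^\star}h-D$ with $D\ge0$, and bounding $\int_B h-\int_{A^\star}h$ against $\tau(|B|-|A|)$ via $h\ge\tau$ on $B$ and $h\le\tau$ off $B$ — separately in the cases $|A|\ge|B|$ and $|A|<|B|$ — one obtains the clean inequality $\delta\ge D$. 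Thus $\delta$ dominates the Riesz--Sobolev deficit of the ordered triple $(E_1,E_2,A)$. (Taking $D\ge0$ trivially recovers Theorem~\ref{thm:RSvariant}.)

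Suppose now that $(E_1,E_2,A)$ is $\rho'$-strictly admissible for a constant $\rho'=\rho'(\rho,\Lambda,d)>0$ still to be chosen. Then Theorem~\ref{thm:RSsharpened}, applied to this triple, gives $D\ge c(\rho',d)\,\Dist\big((E_1,E_2,A),\scripto(E_1^\star,E_2^\star,A^\star)\big)^2$. Discarding the third coordinate — and observing that the constraint $v_3=v_1+v_2$ is vacuous once $v_1,v_2$ have been prescribed — one has $\Dist\big((E_1,E_2,A),\scripto(E_1^\star,E_2^\star,A^\star)\big)\ge\Dist\big((E_1,E_2),\scripto(E_1^\star,E_2^\star)\big)$, so $\delta\ge D\ge c(\rho,\Lambda,d)\,\Dist\big((E_1,E_2),\scripto(E_1^\star,E_2^\star)\big)^2$, as required. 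Everything therefore reduces to the residual configurations in which $(E_1,E_2,A)$ fails to be $\rho'$-strictly admissible, for which it suffices to prove $\delta\ge c_0>0$ with $c_0=c_0(\rho,\Lambda,d)$, since $\Dist\big((E_1,E_2),\scripto(E_1^\star,E_2^\star)\big)\le|E_1|+|E_2|\le2$ trivially.

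This residual case is where I expect the real work to lie. One has $\int(h-\tau)_+\ge c_1>0$ with $c_1=c_1(\rho,\Lambda,d)$, because $h$ is an explicit continuous function of the two radii with $\max h=\min(|E_1|,|E_2|)>\tau$ as $\lambda<1$, and $\int(h-\tau)_+$ depends continuously on $(|E_1|,|E_2|,\lambda)$, which ranges over a compact set. The elementary estimates $\int(g-\tau)_+\le|A|\min(|E_1|,|E_2|)$ and $\int(g-\tau)_+\le V-\tau|A|$ (and $|\{g>s\}|\le V/s$, itself a consequence of \eqref{eq:RS1}) then force $\int(g-\tau)_+<c_1/2$, hence $\delta>c_1/2$, whenever $|A|$ falls outside a fixed compact subinterval of $(0,\infty)$ determined by $\rho,\Lambda,d$; on that subinterval the clause $\min r\ge\rho'\max r$ of $\rho'$-strict admissibility holds automatically once $\rho'$ is small, and a direct computation shows that the triple $\big(|E_1|^{1/d},|E_2|^{1/d},|\{h>\tau\}|^{1/d}\big)$ attached to the \emph{extremal} configuration is $2\rho'$-strictly admissible (with $\rho'\sim\rho^{1/d}$). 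The remaining task is thus to exclude configurations for which $(E_1,E_2,A)$ is not $\rho'$-strictly admissible yet $\delta$ is small; this I would do by a compactness argument, extracting from a putative sequence $\delta^{(n)}\to0$ limiting profiles that force equality in Theorem~\ref{thm:RSvariant}, hence an extremal limit whose associated triple is $2\rho'$-strictly admissible, contradicting that none of the $(E_1^{(n)},E_2^{(n)},A^{(n)})$ is $\rho'$-strictly admissible. The subtlest instances are those where $|A|^{1/d}$ approaches $|E_1|^{1/d}+|E_2|^{1/d}$, i.e.\ where $(E_1,E_2)$ has small Brunn--Minkowski deficit; a quantitative Brunn--Minkowski stability bound is the natural tool for that sub‑case.
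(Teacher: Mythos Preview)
Your core reduction is the same as the paper's: both rewrite $\int\min(g,\tau)-\int\min(h,\tau)$ via the layer-cake identity and then invoke the sharpened Riesz--Sobolev inequality (Theorem~\ref{thm:RSsharpened}) for the triple $(E_1,E_2,A)$ with $A=\{g>\tau\}$, discarding the third coordinate at the end. Your inequality $\delta\ge D$ is exactly the computation the paper carries out.

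The genuine divergence is in how strict admissibility of $(E_1,E_2,A)$ is secured. The paper first uses the flow of Proposition~\ref{prop:RSflow} (via Lemma~\ref{lemma:flowforvariant}) to reduce to the perturbative regime $\Dist((E_1,E_2),\scripto(E_1^\star,E_2^\star))\ll\max|E_j|$; in that regime $\|g-h\|_{L^\infty}$ is small, hence $|A\symdif B|$ is small, and admissibility of $(E_1,E_2,A)$ follows from that of $(E_1^\star,E_2^\star,B)$. You instead attempt to dispose of the non-admissible configurations directly.

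There your argument has a gap. The compactness you invoke --- ``extracting limiting profiles that force equality in Theorem~\ref{thm:RSvariant}'' --- is not available: arbitrary measurable sets of bounded measure have no sequential compactness, and the characterization of equality in Theorem~\ref{thm:RSvariant} is itself a \emph{consequence} of Theorem~\ref{thm:variantinverse}, not an independent input. The Brunn--Minkowski stability you mention is a red herring: the problematic case is $|A|^{1/d}\to|E_1|^{1/d}+|E_2|^{1/d}$, but nothing forces $|E_1+E_2|$ to be near the Brunn--Minkowski lower bound.

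The fix is already implicit in your own decomposition. You wrote $\delta=D+T$ with $T=(\int_B h-\int_{A^\star}h)-\tau(|B|-|A|)\ge0$; but $T=\int_{A^\star\symdif B}|h-\tau|$, and since $\nabla h\ne0$ on $\partial B$ (this uses only $\tau/\min|E_j|\in\Lambda\Subset(0,1)$ and $\min|E_j|\ge\rho\max|E_j|$, not admissibility of any triple involving $A$), one has $T\ge c\min\big((|A|-|B|)^2,\,c_0\big)$ uniformly. Thus $\delta$ small forces $|A|$ close to $|B|$, which together with your compact-subinterval argument gives $\rho'$-strict admissibility of $(E_1,E_2,A)$ directly, with no compactness of sets needed. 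With this observation in place your route is complete and essentially equivalent to the paper's; what each buys is the same conclusion, but the paper's flow reduction has the advantage of packaging the admissibility issue into a single preliminary step rather than a case analysis.
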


In particular, for $0<\tau<\min(|E_1|,|E_2|)$, equality holds in \eqref{eq:variantRS}
only if $(E_1,E_2)$ is a pair of homothetic ellipsoids.
For $d=1$ a slightly more quantitative result is proved in \cite{christRS3}.

A formally sharper variant holds.
Given $E_1,E_2,\tau$,
define \begin{align*}
S_\tau&=\{x\in\reals^d: (\one_{E_1}*\one_{E_2})(x)>\tau\}
\\
S_\tau^\sharp&=\{x\in\reals^d: (\one_{E_1^\star}*\one_{E_2^\star})(x)>\tau\}.
\end{align*}
\begin{theorem} \label{thm:variantinverse2}
For any $d\ge 1$, any compact set $\Lambda\subset(0,1)$, and any $\rho>0$
there exists $c>0$ such that for any  Lebesgue measurable sets $E_j\subset\reals^d$ with finite,
positive Lebesgue measures
satisfying $\min(|E_1|,|E_2|)\ge\rho \max(|E_1|,|E_2|)$
and any $\tau\in\reals^+$ such that $\tau/\min(|E_1|,|E_2|)\in\Lambda$,
\begin{equation} 
\begin{aligned}
\int_{\reals^d} \min(\one_{E_1}*\one_{E_2},\tau) 
&\ge \int_{\reals^d} \min(\one_{E_1^\star}*\one_{E_2^\star},\tau) 
\\  &
\ \ + c\Dist((E_1,E_2),\scripto(E_1^\star,E_2^\star))^2
+ c(|S_\tau|-|S^\sharp_\tau|)^2.
\end{aligned}
\end{equation}
\end{theorem}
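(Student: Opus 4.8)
The plan is to deduce Theorem~\ref{thm:variantinverse2} from Theorem~\ref{thm:variantinverse} by separating the contribution of the region where the convolution is large from its contribution elsewhere. Write $F(x) = (\one_{E_1}*\one_{E_2})(x)$ and $G(x) = (\one_{E_1^\star}*\one_{E_2^\star})(x)$, so that $S_\tau = \{F>\tau\}$ and $S^\sharp_\tau = \{G>\tau\}$. The key algebraic identity is
\begin{equation*}
\int_{\reals^d}\min(F,\tau) = \int_{S_\tau}\tau + \int_{S_\tau^c} F = \tau|S_\tau| + \int_{\reals^d}F - \int_{S_\tau}F,
\end{equation*}
and likewise for $G$ and $S^\sharp_\tau$. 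Since $\int F = |E_1|\,|E_2| = |E_1^\star|\,|E_2^\star| = \int G$, subtracting gives
\begin{equation*}
\int\min(F,\tau) - \int\min(G,\tau) = \tau\bigl(|S_\tau| - |S^\sharp_\tau|\bigr) - \Bigl(\int_{S_\tau}(F-\tau) - \int_{S^\sharp_\tau}(G-\tau)\Bigr).
\end{equation*}
Now $\int_{S_\tau}(F-\tau) = \int_{\reals^d}(F-\tau)_+$ and similarly for $G$, so the quantity in the outer parentheses is $\int(F-\tau)_+ - \int(G-\tau)_+$, which is nonnegative by the layer-cake formula together with the Riesz--Sobolev inequality applied at each level $s>\tau$ (for each such $s$, $|\{F>s\}| = $ the measure of a superlevel set controlled by $\int_{\{F>s\}}\one_{E_1}*\one_{E_2}$, and this is bounded via \eqref{eq:RS1} by the corresponding symmetrized quantity). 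Thus both $|S_\tau| - |S^\sharp_\tau|$ (which I will argue is nonnegative, since the symmetrized superlevel set $S^\sharp_\tau$ is the ball minimizing measure among superlevel sets of convolutions with the given marginals — this is exactly the content of Theorem~\ref{thm:RSvariant} differentiated in $\tau$) and the bracketed term are nonnegative, and the identity already recovers Theorem~\ref{thm:RSvariant}.

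To get the sharpened bound, I would show that \emph{each} of the two nonnegative pieces individually dominates a constant times $\Dist((E_1,E_2),\scripto(E_1^\star,E_2^\star))^2$, and that the first piece dominates $c(|S_\tau| - |S^\sharp_\tau|)^2$. The cleanest route: apply Theorem~\ref{thm:variantinverse} not at the single threshold $\tau$ but to a short interval of thresholds. Concretely, since $\tau/\min(|E_1|,|E_2|)$ lies in the compact set $\Lambda\subset(0,1)$, one can find a fixed $\eta = \eta(\Lambda,\rho)>0$ so that the interval $[\tau, \tau+\eta\min(|E_1|,|E_2|)]$ still has $\tau'/\min(|E_1|,|E_2|)$ in a slightly enlarged compact subset of $(0,1)$; integrating the conclusion of Theorem~\ref{thm:variantinverse} over $\tau'$ in this interval and using the same identity as above shows that $\int(F-\tau)_+ - \int(G-\tau)_+ \ge c'\,\Dist(\cdots)^2$. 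That handles the bracketed term. For the level-set term, note that for $\tau < s < \tau + \eta\min(|E_1|,|E_2|)$, the measures $|\{F>s\}|$ and $|S_\tau|$ differ by at most $O(\Dist(\cdots) + (|S_\tau|-|S^\sharp_\tau|))$-type quantities plus a controlled error — more simply, $\tau \mapsto |S_\tau|$ and $\tau \mapsto |S^\sharp_\tau|$ are monotone, so $\int_{\tau}^{\tau+\eta\min}\bigl(|S_{s}| - |S^\sharp_{s}|\bigr)\,ds$ both lower bounds $c\eta\min(|E_1|,|E_2|)(|S_{\tau+\eta\min}| - |S^\sharp_\tau|)$-type expressions and, being part of $\int(F-\tau)_+-\int(G-\tau)_+$ up to boundary terms, is already controlled. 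The honest statement is that $|S_\tau|-|S^\sharp_\tau|$ is itself one of the discrepancies measured by $\Dist$ after symmetrization, so I would instead invoke a quantitative form of the fact that among superlevel sets $\{F>\tau\}$ with prescribed marginals, the ball is the unique minimizer, with a quadratic deficit — this is precisely Theorem~\ref{thm:variantinverse} read for the pair of sets $(E_1, E_2)$ at level $\tau$, where the distance term already encodes how far $S_\tau$ is from a ball; combining the near-equality in the structure of $E_1, E_2$ with the explicit computation of $S^\sharp_\tau$ for the symmetrized pair gives $|S_\tau| - |S^\sharp_\tau| = O(\Dist(\cdots))$ and, conversely, a lower bound $\gtrsim (|S_\tau|-|S^\sharp_\tau|)^2$ via a one-dimensional convexity estimate on the profile function $s\mapsto |\{F>s\}|$.

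The main obstacle, I expect, is controlling $(|S_\tau| - |S^\sharp_\tau|)^2$ by the deficit $\int\min(F,\tau) - \int\min(G,\tau)$ \emph{independently} of the $\Dist$ term — that is, showing the two quadratic quantities on the right of Theorem~\ref{thm:variantinverse2} are simultaneously controlled rather than trading off against each other. The resolution should come from the observation that the deficit $\int\min(F,\tau) - \int\min(G,\tau)$, by the identity above, equals $\tau(|S_\tau|-|S^\sharp_\tau|) - \bigl(\int(F-\tau)_+ - \int(G-\tau)_+\bigr)$; since the second term is $\ge 0$, we get $\int\min(F,\tau)-\int\min(G,\tau) \le \tau(|S_\tau|-|S^\sharp_\tau|)$ in one direction, but in the other direction one needs to exploit that $\int(F-\tau)_+ - \int(G-\tau)_+$ is not merely nonnegative but is comparable to $(\text{something})\cdot(|S_\tau|-|S^\sharp_\tau|)$ for thresholds slightly above $\tau$ — concretely, a slab estimate $\int(F-\tau)_+ - \int(G-\tau)_+ \le \tfrac12 \tau (|S_\tau| - |S^\sharp_\tau|) + $ (quadratic deficit terms), valid because $F-\tau \le $ (bounded constant) on $S_\tau$ cannot be too close to $\tau$ on a set much larger than $S^\sharp_\tau$ without forcing structure. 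Making this slab estimate quantitative, with the correct quadratic error, is the technical heart; once it is in place, the two displayed quadratic lower bounds follow by combining it with Theorem~\ref{thm:variantinverse} and rearranging, and the constant $c$ depends only on $d$, $\Lambda$, and $\rho$ as required.
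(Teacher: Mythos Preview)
Your identity is wrong: since $\min(F,\tau)=F-(F-\tau)_+$ and $\int F=\int G$, one has simply
\[
\int\min(F,\tau)-\int\min(G,\tau)=\int(G-\tau)_+-\int(F-\tau)_+,
\]
with no surviving $\tau(|S_\tau|-|S^\sharp_\tau|)$ term; that term cancels in your computation. More seriously, your claim that $|S_\tau|\ge|S_\tau^\sharp|$ is false. The paper states explicitly that ``there is no pointwise inequality relating the two quantities $|S_t(E_1,E_2)|$ and $|S_t(E_1^\star,E_2^\star)|$, in general,'' and indeed $|S_\tau|-|S_\tau^\sharp|$ can take either sign. So your decomposition into two ``nonnegative pieces'' collapses, and the subsequent strategy of bounding each piece separately by $\Dist^2$ cannot get started. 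The averaging-over-thresholds idea does not recover the missing quadratic term $(|S_\tau|-|S_\tau^\sharp|)^2$, because you have no a~priori sign or monotonicity for $s\mapsto|S_s|-|S_s^\sharp|$.

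The paper does not deduce Theorem~\ref{thm:variantinverse2} from Theorem~\ref{thm:variantinverse}; it proves both simultaneously from the sharpened Riesz--Sobolev inequality (Theorem~\ref{thm:RSsharpenedscriptt}). After a flow reduction to the perturbative regime $\max(|A\symdif A^\star|,|B\symdif B^\star|)\ll\max(|A|,|B|)$, one applies Theorem~\ref{thm:RSsharpenedscriptt} to the \emph{triple} $(A,B,S_\tau(A,B))$, which is strictly admissible in this regime, obtaining
\[
\int_{S}\one_A*\one_B\le\int_{S^\star}\one_{A^\star}*\one_{B^\star}-c\,\Dist((A,B),\scripto(A^\star,B^\star))^2.
\]
The term $(|S|-|S^\sharp|)^2$ arises from a separate, elementary estimate: since $\one_{A^\star}*\one_{B^\star}$ is radial with nonvanishing gradient at its $\tau$--level set $\partial S^\sharp$, one has
\[
\int_{S^\star}\one_{A^\star}*\one_{B^\star}\le\int_{S^\sharp}\one_{A^\star}*\one_{B^\star}-\tau(|S^\sharp|-|S|)-c(|S^\sharp|-|S|)^2,
\]
valid for either sign of $|S^\sharp|-|S|$. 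Chaining these two inequalities and unwinding the identity $\int\min(\one_A*\one_B,\tau)=|A||B|+\tau|S|-\int_S\one_A*\one_B$ gives both quadratic terms at once.
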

A corresponding improvement of Theorem~\ref{thm:RSsharpened} holds.
See \cite{christRS3} for the case $d=1$.


The author is indebted to Guy David for an insightful question,
and to Almut Burchard for a useful conversation and for providing a reference.

\section{Notation and reformulation}

$\one_E$ denotes the indicator function of a set $E$, and $|E|$ denotes its Lebesgue measure. 
All functions in this paper are real-valued. $\langle f,g\rangle = \int fg$; the integral is understood
to be taken over $\reals^d$ with respect to Lebesgue measure unless the contrary is explicitly indicated.
$A\symdif B = (A\setminus B) \cup (B\setminus A)$ denotes the symmetric difference between two sets.
Since $|A\symdif B| = \norm{\one_A-\one_B}_{L^1}$, one has the triangle inequality
$|A\symdif C| \le |A\symdif B| + |B\symdif C|$ for any three measurable sets.

It will be convenient to reformulate the Riesz-Sobolev inequality in more symmetric form.
Let $\lambda$ be the natural Lebesgue measure on $\Sigma = \{\bx=(x_1,x_2,x_3)\in (\reals^d)^3: x_1+x_2+x_3=0\}$;
\begin{equation}
\int_\Sigma f(\bx)\,d\lambda(\bx) = \int_{\reals^d\times\reals^d} f(x_1,x_2,-x_1-x_2)\,dx_1\,dx_2.
\end{equation}
Define
\begin{equation}
\scriptt(\bE) = \int_\Sigma \prod_{j=1}^3 \one_{E_j}(x_j)\,d\lambda(\bx).
\end{equation}
This is equal to $\int_{-E_3} \one_{E_1}*\one_{E_2}$, where $-E_3=\{-x: x\in E_3\}$.
Since $(-E)^\star \equiv E^\star$, the Riesz-Sobolev inequality is 
equivalent to \[\scriptt(\bE)\le\scriptt(\bEstar)\ \text{ for all } \bE.\]

\begin{definition}
The distance from $\bE$ to the orbit of $\bF$ is
\begin{equation}
\dist(\bE,\scripto(\bF)) = \inf_{\psi,\bv}
\max_{j\in\{1,2,3\}} |E_j\symdif (\psi(F_j)+v_j)|
\end{equation}
where the infimum is taken over all $\bv=(v_1,v_2,v_3)\in(\reals^d)^3$
satisfying $ v_1+v_2+v_3=0$
and over all Lebesgue measure--preserving invertible linear automorphisms $\psi$ of $\reals^d$.
\end{definition}

Our main result can equivalently be formulated as follows. 
\begin{theorem} \label{thm:RSsharpenedscriptt}
For each $d\ge 1$ and each $\rho>0$ there exists $c>0$ such that
for each $\rho$--strictly admissible ordered triple $\bE$
of Lebesgue measurable subsets of $\reals^d$,
\begin{equation} \label{eq:RSsharpened2}
\scriptt(\bE)\le \scriptt(\bEstar) - c \Dist(\bE,\scripto(\bEstar))^2.  \end{equation}
\end{theorem}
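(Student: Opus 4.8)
The plan is to combine a compactness argument for configurations far from the extremal orbit with a quantitative local analysis near it; the local analysis rests on the sharpened variant inequality Theorem~\ref{thm:variantinverse}, which I would establish in parallel by the same scheme (with one fewer set in play), together with a quantitative bathtub principle. First, normalize $\max_j|E_j|=1$ and fix a small $\eps_0>0$. For triples with $\Dist(\bE,\scripto(\bEstar))\ge\eps_0$: modulo the affine symmetry group the relevant configuration space becomes effectively compact once near--maximizers are truncated to bounded essential diameter (a component far from the bulk contributes negligibly to $\scriptt$, so excising it costs almost nothing and strictly decreases $\Dist$), and there $\scriptt(\bEstar)-\scriptt(\bE)$ is continuous and, by Burchard's theorem, positive, hence $\ge\delta_0(d,\rho,\eps_0)>0$; since $\Dist(\bE,\scripto(\bEstar))\le 2$, \eqref{eq:RSsharpened2} follows with $c=\delta_0/4$. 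It remains to treat $\Dist(\bE,\scripto(\bEstar))\le\eps_0$; by the qualitative inverse theorem of \cite{christRShigher} and the affine invariance of $\scriptt$ and $\Dist$, after a Lebesgue--measure--preserving linear change of variables one may assume each $E_j$ is $L^1$--close to a ball $B_j=E_j^\star+w_j$ with $w_1+w_2+w_3=0$ and $\mu:=\max_j|E_j\symdif B_j|$ comparable to $\Dist(\bE,\scripto(\bEstar))$.

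For the local estimate, fix $j=1$, write $h=\one_{E_2}*\one_{-E_3}$, so $\scriptt(\bE)=\int\one_{E_1}h$, and let $\tilde B_1=\{h>t_1\}$ with $|\tilde B_1|=|E_1|$. Then
\[\scriptt(\bEstar)-\scriptt(\bE)=\Big(\int\one_{\tilde B_1}h-\int\one_{E_1}h\Big)+\Big(\scriptt(\bEstar)-\int\one_{\tilde B_1}h\Big),\]
both parentheses being nonnegative. For the first I would invoke a \emph{quantitative bathtub lemma}: if $|\{|h-t_1|<\ell\}|\le K\ell$ for $0\le\ell\le\ell_0$, then $\int\one_{\tilde B_1}h-\int\one_F h\ge c\,K^{-1}|F\symdif\tilde B_1|^2$ for $|F|=|\tilde B_1|$, $|F\symdif\tilde B_1|\le 2K\ell_0$, which follows from $\int\one_{\tilde B_1}h-\int\one_F h=\int_{\tilde B_1\setminus F}(h-t_1)+\int_{F\setminus\tilde B_1}(t_1-h)$ together with the fact that a subset of $\{h>t_1\}$ of measure $m$ places at least $m/2$ of its mass where $h-t_1\ge m/2K$. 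The non--concentration hypothesis holds with $K,\ell_0$ depending only on $d,\rho$: $h$ is within $2\mu$ in $L^\infty$ of the radial decreasing $g_1=\one_{B_2}*\one_{-B_3}$, which is centered at the center $w_1$ of $B_1$, and $\rho$--strict admissibility forces the radius $R_1$ of $E_1^\star$ to lie a definite distance inside the interval with endpoints the sum and absolute difference of the radii of $B_2,B_3$, on which $|\nabla g_1|$ is bounded below; this also yields $|\tilde B_1\symdif B_1|\le C(d,\rho)\mu$. This is the first place where strict admissibility is essential.

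For the second parenthesis, note $\int\one_{\tilde B_1}h=\sup_{|F|=|E_1|}\scriptt(F,E_2,E_3)\le\sup_{|F|=|E_1|}\scriptt(F^\star,E_2^\star,E_3^\star)=\scriptt(\bEstar)$ by \eqref{eq:RS1} and the bathtub principle. Writing $g^\star=\one_{E_2^\star}*\one_{E_3^\star}$ and using $\sup_{|F|=m}\int\one_F f=\min_\tau\big(\tau m+\|f\|_1-\int\min(f,\tau)\big)$ with $f=h$ and $f=g^\star$ (and $\|h\|_1=\|g^\star\|_1$), one gets, with $\tau^\ast$ defined by $|\{g^\star>\tau^\ast\}|=|E_1|$ (so $\tau^\ast=g^\star(R_1)$, which $\rho$--strict admissibility places with $\tau^\ast/\min(|E_2|,|E_3|)$ in a compact subset of $(0,1)$ depending only on $\rho$),
\[\scriptt(\bEstar)-\int\one_{\tilde B_1}h\ \ge\ \int\min(h,\tau^\ast)-\int\min(g^\star,\tau^\ast)\ \ge\ c(d,\rho)\,\Dist\big((E_2,E_3),\scripto(E_2^\star,E_3^\star)\big)^2,\]
the last step being Theorem~\ref{thm:variantinverse} applied to the pair $(E_2,-E_3)$ at the level $\tau^\ast$ (using $(-E_3)^\star=E_3^\star$ and $\Dist((E_2,-E_3),\scripto(E_2^\star,E_3^\star))=\Dist((E_2,E_3),\scripto(E_2^\star,E_3^\star))$). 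Hence, in the local regime,
\[\scriptt(\bEstar)-\scriptt(\bE)\ \ge\ c(d,\rho)\big(|E_1\symdif\tilde B_1|^2+\Dist((E_2,E_3),\scripto(E_2^\star,E_3^\star))^2\big)\]
(the bathtub term being read as $\ge c(d,\rho)\ge c\,\Dist(\bE,\scripto(\bEstar))^2$ when $|E_1\symdif\tilde B_1|>\ell_0$), and cyclically the analogues for $j=2,3$.

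Adding the three cyclic inequalities, $\scriptt(\bEstar)-\scriptt(\bE)\gtrsim_{d,\rho}\sum_j|E_j\symdif\tilde B_j|^2+\sum_j\Dist\big((E_{j+1},E_{j+2}),\scripto(E_{j+1}^\star,E_{j+2}^\star)\big)^2$, and it remains to bound $\Dist(\bE,\scripto(\bEstar))^2$ by the right side — a \emph{gluing} step. If all three pairs are within $\eta$ of their ellipsoid orbits, then a Lebesgue--measure--preserving linear map carrying $E_j^\star$ close to a ball close to $E_j$ must be within $O(\eta)$ of an orthogonal map, so one $\psi$ serves all three pairs (balls are invisible to orthogonal maps), and the translations can be matched because in the local regime the centroids of the $E_j$ nearly sum to zero; this gives $\Dist(\bE,\scripto(\bEstar))\le C(d)\max_j\Dist((E_{j+1},E_{j+2}),\scripto(E_{j+1}^\star,E_{j+2}^\star))$, and the remaining degenerate case — all defects comparable and small — is closed by expanding $\scriptt$ about the extremal triple, whose second variation is coercive transverse to the orbit (the orbit tangent being the first spherical--harmonic modes, from translations, together with the trace--free second modes, from $SL(d)$): this coercivity, uniform over the compact set of admissible shapes, is the second essential use of strict admissibility. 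The points where the real work is concentrated are the a priori essential--diameter control underlying the compactness step and its uniformity over $\rho$--strictly admissible near--maximizers; the regularity upgrade from the $L^1$--closeness given by \cite{christRShigher} to the graph--over--sphere description needed for the bathtub and expansion arguments; and establishing Theorem~\ref{thm:variantinverse} with constant uniform over the relevant compact range of thresholds. The exponent $2$ is optimal: for $E_j=E_j^\star$ perturbed by a spherical shell of width $\eta$ one has $\Dist(\bE,\scripto(\bEstar))\asymp\eta$ and $\scriptt(\bEstar)-\scriptt(\bE)\asymp\eta^2$.
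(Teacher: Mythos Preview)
Your proposal has two genuine gaps. The first is circularity: you invoke Theorem~\ref{thm:variantinverse} to control the second parenthesis in your local decomposition, but in this paper Theorem~\ref{thm:variantinverse} is \emph{deduced from} Theorem~\ref{thm:RSsharpenedscriptt} --- its proof applies \eqref{eq:RSsharpened2} to the triple $(A,B,S_\tau(A,B))$. Your remark that Theorem~\ref{thm:variantinverse} can be established ``by the same scheme (with one fewer set in play)'' does not break the circle: run for pairs, your scheme would require either a sharp variant for a single set (which is vacuous) or a direct second-variation analysis of the pair functional, and the latter is exactly the kind of work the paper carries out for triples. No independent route to Theorem~\ref{thm:variantinverse} is supplied. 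The second gap is in the gluing step. The claimed bound $\Dist(\bE,\scripto(\bEstar))\le C\max_j\Dist((E_{j+1},E_{j+2}),\scripto(E_{j+1}^\star,E_{j+2}^\star))$ is false as stated: for $E_j=B_j+v_j$ with $\sum_j v_j\ne 0$ every pair-distance vanishes (the two translations in the pair-distance carry no sum constraint) while the triple-distance is $\asymp|\sum_j v_j|$. You patch this with bathtub terms and centroids, but then explicitly defer the ``remaining degenerate case'' to coercivity of the second variation of $\scriptt$ transverse to the orbit. That coercivity is precisely the main technical content of the paper --- Proposition~\ref{prop:RSperturbative} together with the balancing Lemmas~\ref{lemma:centering}--\ref{lemma:d=1balancing} and the spectral Lemmas~\ref{lemma:nge3}--\ref{lemma:n=2}, established in \S\ref{section:centralize}--\S\ref{section:finalsteps} by a nontrivial argument (the optimal constant in \eqref{eq:A} is exactly $\tfrac12$ on the full space, so one must first quotient out the orbit directions and then prove the strict inequality degree by degree). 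With that coercivity in hand, your bathtub/variant apparatus is redundant; without it, your argument is incomplete.

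For the far-from-orbit reduction the paper also takes a different and cleaner route than your compactness-plus-truncation sketch: it uses the monotone continuous flow of Propositions~\ref{prop:RSflow} and~\ref{prop:RSflowd>1} to deform $\bE$ until $\dist(\bE(t),\scripto(\bEstar))$ first equals $\delta_0$, applies the perturbative Proposition~\ref{prop:1} at that time, and invokes monotonicity of $t\mapsto\scriptt(\bE(t))$. This sidesteps the essential-diameter control and compactness-modulo-$\aff(d)$ issues that your sketch leaves unaddressed.
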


The Steiner symmetrization $E^\dagger$ of a Lebesgue measurable set $E\subset\reals^d$,
satisfying $|E|<\infty$, is defined as follows.
Regard $\reals^d$ as $\reals^{d-1}\times\reals^1$ with coordinates $(x',t)$.
Define the vertical slices $E_{x'}=\{t\in\reals: (x',t)\in E\}$.
Denote by $O(d)$ the group of all orthogonal linear transformations of $\reals^d$.
\begin{definition}
\begin{equation} E^\dagger= \{(x',t): |t|\le \tfrac12 |E_{x'}|\}.\end{equation}
For $\scripto\in O(d)$, 
\begin{equation} E^\dagger_\scripto= \scripto^{-1}\big[(\scripto(E))^\dagger\big].
\end{equation}
\end{definition}
If $|E_{x'}|<\infty$
for every $x'$ then $E^\dagger=\{(x',t): t \in (E_{x'})^\star\}$,
where ${E_{x'}}^\star$ denotes the symmetrization of $E_{x'}\subset \reals^1$.
$E^\dagger_\scripto$ is the Steiner symmetrization of 
$E$ in the direction $\scripto^{-1}(0,0,\dots,0,1)\in\reals^d$.

\section{A flow of sets}

For general sets,
the following result is perhaps best described as folklore. It was known long ago to Burchard \cite{burchardoral},
and a version of it is mentioned in her 2009 lecture notes \cite{burchardlecturenotes}. It
appears in a recent work of Carillo, Hittmeir, Volzone, and Yao \cite{CHVY}. 
While it is not essential to our analysis, it does simplify one step,
and deserves to be more widely known.

\begin{proposition}\label{prop:RSflow}
There exists a flow $(t,E)\mapsto E(t)$ of equivalence classes of Lebesgue measurable subsets
of $\reals^1$ with finite measures, defined for $t\in[0,1]$,
having the following properties for all sets $E$:
\begin{enumerate}
\item
$E(0)=E$ and $E(1) = E^\star$.
\item 
Measure preserving:
$|E(t)| = |E|$ for all $t\in[0,1]$.
\item
Continuity:
$|E(s)\symdif E(t)|\to 0$ as $s\to t$.
\item
Inclusion monotonicity:
If $E\subset\tilde E$ then $E(s)\subset \tilde E(s)$ for all $s\in[0,1]$.
\item
Contractivity: 
$|E_1(t)\symdif E_2(t)|\le |E_1\symdif E_2|$ for all sets $E_1,E_2$.
\item
Independence of past history:
If $0\le s\le t\le 1$ then
$E(t)$ depends only on $E(s),s,t$.
Moreover, $E(t) = (E(s))(\tau)$
where $1-\tau = \frac{1-t}{1-s}$.
\item 
Functional monotonicity and continuity:
The function $t\mapsto \scriptt(E_1(t),E_2(t),E_3(t))$ is continuous and nondecreasing on $[0,1]$.
\end{enumerate}
\end{proposition}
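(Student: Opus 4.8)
The plan is to construct the flow first on a dense class of simple sets and then extend it by continuity. Call $E\subset\reals^1$ \emph{elementary} if it is a union of finitely many bounded intervals. Elementary sets are dense, with respect to the metric $(A,B)\mapsto|A\symdif B|$, among the sets of finite measure, and — because an $L^1$ limit of indicator functions of sets of one fixed measure $m$ is again the indicator function of a set of measure $m$ — this metric is complete on the family of all sets of measure $m$. For elementary $E$ I would take $E(t)$ to be the (reparametrized) one-dimensional continuous Steiner symmetrization of $E$: writing $E$ modulo endpoints as a finite disjoint union of open intervals, let every component drift toward the origin at unit speed — a component lying in $(0,\infty)$ moving left, one lying in $(-\infty,0)$ moving right, and one straddling the origin recentering at $0$ and then halting — with the conventions that two components merge into a single interval the instant their closures meet and that a component already centered at $0$ stays fixed. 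This defines a semigroup $s\mapsto E_{(s)}$ on $[0,\infty]$ with $E_{(0)}=E$ that reaches $E^\star$ after a finite time comparable to the spread of $E$ and is stationary thereafter; I then set $E(t)=E_{(-\log(1-t))}$ for $t\in[0,1)$ and $E(1)=E^\star$.

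Properties (1), (2), (3), (4) and (6) can be read directly off this description. We have $E(0)=E$ and $E(1)=E^\star$; the flow is built from translations of mutually disjoint intervals and from mergers of intervals meeting only at an endpoint, all measure-preserving; each surviving component moves continuously in time and the finitely many merger times produce no jump in measure; if $E\subset\tilde E$ then each component of $E$ lies in a component of $\tilde E$ and is transported inside it; and property (6) amounts to the semigroup law for $s\mapsto E_{(s)}$ combined with the identity that $(-\log(1-\tau))+(-\log(1-s))=-\log(1-t)$ precisely when $1-\tau=(1-t)/(1-s)$.

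The remaining work is (5) and (7), together with the extension to general sets, and here I would use the standard fact that continuous Steiner symmetrization in one dimension is realized by a single \emph{universal} sequence of two-point polarizations $\sigma_H$: for every fixed $t$ there is a sequence $(\sigma_{H_n})_n$, depending only on $t$ and not on the set, such that $E(t)$ is the $L^1$ limit of $\sigma_{H_n}\cdots\sigma_{H_1}(E)$ for every $E$. Since each polarization satisfies $|\sigma_H A\symdif\sigma_H B|\le|A\symdif B|$, iterating and passing to the limit gives $|E_1(t)\symdif E_2(t)|\le|E_1\symdif E_2|$; in particular $E\mapsto E(t)$ is $1$-Lipschitz on elementary sets for the $\symdif$ metric, so it extends uniquely to a $1$-Lipschitz map on all sets of the given measure, and properties (1)–(6) pass to the limit (for (3) and for $E(1)=E^\star$ one also uses $|A^\star\symdif B^\star|=\bigl|\,|A|-|B|\,\bigr|\le|A\symdif B|$, which forces $|E_{(s)}\symdif E^\star|\to0$). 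Finally, for (7): by past-independence $\bigl(E_1(t),E_2(t),E_3(t)\bigr)$ is, for $s\le t$, the $L^1$ limit of $\bigl(\sigma_{H_n}\cdots\sigma_{H_1}(E_1(s)),\,\sigma_{H_n}\cdots\sigma_{H_1}(E_2(s)),\,\sigma_{H_n}\cdots\sigma_{H_1}(E_3(s))\bigr)$ for a common polarization sequence; each common polarization obeys the elementary rearrangement inequality $\scriptt(\sigma_H A,\sigma_H B,\sigma_H C)\ge\scriptt(A,B,C)$, and $\scriptt$ is continuous in each argument for the $\symdif$ metric, so $\scriptt(E_1(t),E_2(t),E_3(t))\ge\scriptt(E_1(s),E_2(s),E_3(s))$; continuity of $t\mapsto\scriptt(E_1(t),E_2(t),E_3(t))$ follows from continuity of the flow. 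Hence this function is continuous and nondecreasing.

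The part I expect to be the real obstacle is organizational rather than conceptual: making the merger rule precise enough that the semigroup $s\mapsto E_{(s)}$ is genuinely well defined and single-valued at collision times and in the presence of countably many components of possibly unbounded total spread, and nailing down the universal-polarization-sequence representation of the one-dimensional continuous symmetrization together with the exponential time change. The individual rearrangement facts invoked (measure preservation, the two monotonicities, and the polarization inequalities for $|\cdot\symdif\cdot|$ and for $\scriptt$) are classical and, in the one-dimensional discrete setting, elementary; all the care goes into the bookkeeping of the flow and its stability under $L^1$ approximation. A reader willing to invoke the continuous Steiner symmetrization literature (e.g.\ \cite{CHVY} and references therein) can bypass most of this, the main remaining task being the exponential reparametrization $t=1-e^{-s}$ that puts the semigroup into the stated form.
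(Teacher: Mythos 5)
Your construction-plus-density-extension architecture matches the paper's, but the load-bearing step --- the claim that for each fixed $t$ there is a \emph{universal}, set-independent sequence of polarizations $\sigma_{H_n}$ with $\sigma_{H_n}\cdots\sigma_{H_1}(E)\to E(t)$ for every $E$ --- is not a standard fact, and it is false. The universal approximation theorem (Van Schaftingen; Brock--Solynin) applies only to the terminal rearrangement $E^\star$, i.e.\ to $t=1$; for intermediate times no such universal sequence can exist. Concretely: let $\sigma_{H_1}$ be the first polarization in the putative sequence, about the point $a_1$ with good side $(-\infty,a_1)$, and take $R\gg a_1$ with $E=[R,R+1]$ and $E'=[2a_1-R-1,\,2a_1-R]$. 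One checks directly from the definition of polarization that $\sigma_{H_1}(E)=E'=\sigma_{H_1}(E')$, so every subsequent iterate of the two sets coincides and the limits must be equal; but $E(t)$ sits near $+R$ while $E'(t)$ sits near $2a_1-R$, so they are disjoint for $R$ large and $t>0$ small. Since your proofs of contractivity (5) and functional monotonicity (7) both rest on this claim, and the extension of the flow to general measurable sets rests on (5), the argument collapses at this point. (A set-dependent common approximating sequence for a given finite family, in the spirit of Brock's continuous Steiner symmetrization, is a different and much more delicate assertion that you would have to prove, not cite.)

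Both gaps have known repairs, and they are what the paper uses. Contractivity does not need polarizations at all: it follows from inclusion monotonicity (4) together with measure preservation (2) by flowing $A=E_1\cap E_2$ and $B=E_1\cup E_2$, noting $A(t)\subset E_j(t)\subset B(t)$ and $|B(t)\setminus A(t)|=|B|-|A|=|E_1\symdif E_2|$, whence $E_1(t)\symdif E_2(t)\subset B(t)\setminus A(t)$. Property (7) on finite unions of intervals is the genuinely nontrivial ingredient; it is the Brascamp--Lieb--Luttinger monotonicity lemma, proved in \cite{BLL} and \cite{liebloss} for \emph{their} flow, in which an interval's velocity is minus its center (so that centers decay exponentially in the natural time, which is what produces the formula $1-\tau=(1-t)/(1-s)$ in (6)). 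Your unit-speed variant is a different flow, so you could not simply cite that result even after fixing (5) and (7); you would need either to adopt the standard flow or to re-prove the monotonicity for yours. Finally, your one-line justification of (4) --- that each component of $E$ ``is transported inside'' the containing component of $\tilde E$ --- is not accurate, since the two components move according to their own positions and may drift in opposite directions (e.g.\ $E=[0.1,0.2]\subset\tilde E=[-10,0.2]$); the conclusion survives, but it requires an argument.
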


All of these statements are to be interpreted in terms of equivalence classes
of measurable sets, with $E$ equivalent to $E'$ whenever $|E\symdif E'|=0$.
In the case in which the initial set $E$ is a finite union of pairwise disjoint closed intervals,
this flow is a well known device \cite{BLL}, \cite{liebloss}. 
From its construction it is clear that for sets that are finite unions of closed intervals this
flow satisfies
$|E(t)|=|E|$, and if $E_1\subset E_2$ then $E_1(t)\subset E_2(t)$. 
We now sketch a proof of the extension of the flow to arbitrary sets.

\begin{lemma}
For $j=1,2,3$ let $E_j$ be a finite union of pairwise disjoint closed bounded intervals.
Let $t\mapsto E_j(t)$ be as defined in \cite{BLL}, \cite{liebloss}. Then
$|E_1(t)\symdif E_2(t)| \le |E_1\symdif E_2|$.
\end{lemma}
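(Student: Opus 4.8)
The plan is to reduce the claim to a statement about the elementary "two-point" rearrangement that builds the BLL/Lieb--Loss flow, and then to iterate.

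\medskip

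\textbf{Step 1: The basic move.} Recall that for sets that are finite unions of closed intervals, the flow $E(t)$ is obtained by repeatedly applying a family of measure-preserving "folding" operations $R_{a,\theta}$, each of which symmetrizes the set about a point $a$ at a partial rate governed by an angle $\theta$ (equivalently, by composing reflections that interchange the two sides of a hyperplane in a one-parameter fashion). Concretely, for the one-dimensional flow the relevant move is: pick a point $a$, and for a given slice consider the left piece $E\cap(-\infty,a)$ and the right piece $E\cap(a,\infty)$; the elementary operation continuously slides mass toward $a$, keeping the measure of each piece fixed until they are balanced. The key observation is that this move is a \emph{rearrangement}: for each set it is a measure-preserving bijection (modulo null sets) that is built from translations and a reflection on each side of $a$, and it has the monotonicity property $E_1\subset E_2\implies R(E_1)\subset R(E_2)$. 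I would first verify that any operation with these two properties — measure preservation on each of the two complementary half-lines, and inclusion monotonicity — automatically satisfies $|R(E_1)\symdif R(E_2)|\le|E_1\symdif E_2|$. This is the heart of the matter.

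\medskip

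\textbf{Step 2: From monotonicity to contractivity.} For the abstract statement in Step 1, the argument is: write $|E_1\symdif E_2| = |E_1\setminus E_2| + |E_2\setminus E_1|$, and likewise after applying $R$. It suffices to bound $|R(E_1)\setminus R(E_2)|$ by $|E_1\setminus E_2|$ and symmetrically. Using $E_1 = (E_1\cap E_2)\cup(E_1\setminus E_2)$, inclusion monotonicity gives $R(E_1\cap E_2)\subset R(E_1)$ and $R(E_1\cap E_2)\subset R(E_2)$, so $R(E_1)\setminus R(E_2) \subset R(E_1)\setminus R(E_1\cap E_2)$. Now I use that $R$ preserves the measure of each piece on either side of the dividing point $a$: on each half-line, $|R(E_1)| - |R(E_1\cap E_2)| = |E_1| - |E_1\cap E_2| = |E_1\setminus E_2|$ restricted to that half-line, and since $R(E_1\cap E_2)\subset R(E_1)$ this difference equals $|R(E_1)\setminus R(E_1\cap E_2)|$ on that half-line. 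Summing over the two half-lines gives $|R(E_1)\setminus R(E_2)| \le |R(E_1)\setminus R(E_1\cap E_2)| = |E_1\setminus E_2|$. The symmetric estimate is identical, and adding the two yields $|R(E_1)\symdif R(E_2)|\le|E_1\symdif E_2|$.

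\medskip

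\textbf{Step 3: Iteration and passage to the limit.} The flow $t\mapsto E(t)$ on finite unions of intervals is, for each fixed target time, a composition (or a continuum limit of compositions) of moves of the type in Step 1, all using the \emph{same} schedule of dividing points and rates, independent of the particular set $E_j$. Since each move is a contraction for the symmetric-difference metric and the moves applied to $E_1$ and to $E_2$ are the same maps, the composition is again a contraction; passing to the limit in the continuum parametrization preserves the non-strict inequality by continuity (property (3) of Proposition~\ref{prop:RSflow}, or directly from uniform convergence of the finite approximations). This gives $|E_1(t)\symdif E_2(t)|\le|E_1\symdif E_2|$ for all $t\in[0,1]$.

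\medskip

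\textbf{Main obstacle.} The delicate point is bookkeeping in Step 1–Step 2: making precise that the elementary BLL move genuinely preserves the measure of the mass on \emph{each} side of the dividing point (not merely the total), and that it is monotone with respect to inclusion, in the form in which it is actually defined in \cite{BLL}, \cite{liebloss}. Once these two structural facts are isolated, the contractivity estimate is the short set-theoretic computation above. A secondary nuisance is that the flow may be defined via a continuum of infinitesimal moves rather than finitely many discrete ones, so one must either quote the discrete approximation used to construct it or argue that the contractivity constant $1$ is stable under the limit — but since we only need the non-strict inequality, no quantitative control is lost.
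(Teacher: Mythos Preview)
Your Step~2 is essentially the paper's proof, but you have wrapped it in an unnecessary and factually mistaken scaffolding. The flow in \cite{BLL}, \cite{liebloss} is \emph{not} built from two-point rearrangements or polarizations $R_{a,\theta}$; it is the continuous sliding-intervals flow, in which each maximal interval of $E$ moves rigidly toward the origin with velocity equal to the negative of its center, merging with neighboring intervals when they touch. There is no natural decomposition of this motion into folding moves about dividing points, so Step~1 and Step~3 as written do not describe the object under discussion, and your ``main obstacle'' is not a bookkeeping issue but a misidentification of the flow.

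The fix is simply to delete the scaffolding. The paper already records, immediately before the lemma, that the flow on finite unions of intervals satisfies $|E(t)|=|E|$ and $E_1\subset E_2\Rightarrow E_1(t)\subset E_2(t)$. Your Step~2 argument applies verbatim with $R$ taken to be $E\mapsto E(t)$ for fixed $t$: from inclusion monotonicity, $(E_1\cap E_2)(t)\subset E_j(t)\subset (E_1\cup E_2)(t)$, hence $E_1(t)\symdif E_2(t)\subset (E_1\cup E_2)(t)\setminus (E_1\cap E_2)(t)$; from measure preservation, the latter set has measure $|E_1\cup E_2|-|E_1\cap E_2|=|E_1\symdif E_2|$. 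This is exactly the paper's proof. The detour through elementary moves, iteration, and limits is not needed (and the claim that the move ``preserves the measure on each half-line separately'' is both false for the actual flow and unnecessary for the argument).
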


\begin{proof}
Consider the flows of $A=E_1\cap E_2$ and $B=E_1\cup E_2$. Both of these
sets are finite unions of closed bounded intervals, so their flows are defined.
Since $A\subset B$, $A(t)\subset B(t)$ for all $t$.
Therefore
\begin{multline*} |A(t)\symdif B(t)| = |B(t)\setminus A(t)|
= |B(t)|-|A(t)|
\\
= |B|-|A|
= |(E_1\cup E_2)\setminus (E_1\cap E_2)|
= |E_1\symdif E_2|.\end{multline*}
Since $A\subset E_j\subset B$, $A(t)\subset E_j(t)\subset B(t)$ for all $t$.
Therefore 
\[ A(t)\subset E_1(t)\cap E_2(t)\subset E_1(t)\cup E_2(t)\subset B(t)\]
and consequently
\[ E_1(t)\symdif E_2(t) \subset A(t)\symdif B(t).\]
\end{proof}

To define the flow for a general Lebesgue measurable set $E\subset\reals^1$
satisfying $|E|<\infty$,
consider any approximating sequence of sets $E_n$ satisfying
$|E_n\symdif E|\to 0$ as $n\to\infty$.
Since $|E_m\symdif E_n| \le |E_m\symdif E| + |E\symdif E_n|$,
for each $t$ we have
$\lim_{m,n\to\infty} |E_m(t)\symdif E_n(t)|=0$.
Therefore (since $L^1$ is complete) 
there exists a set $\tilde E(t)$ such that $\lim_{n\to\infty} |E_n(t)\symdif \tilde E(t)|= 0$.
Moreover, this set $\tilde E(t)$ is independent of the choice of approximating
sequence $(E_n)$. Define the flow by setting $E(t) = \tilde E(t)$. 

It is clear that if $A$ is a finite union of bounded closed intervals
then $t\mapsto A(t)$ is continuous at $t=0$ in the sense that $|A(t)\symdif A|\to 0$
as $t\to 0$. 
From this and the contraction property $|A(t)\symdif B(t)|\le |A\symdif B|$
it follows immediately that $t\mapsto E(t)$ is continuous at $t=0$
for any set $E$ of finite Lebesgue measure. 
Continuity of $t\mapsto E(t)$ at an arbitrary $s\in[0,1]$
follows from this together with the independence of past history.

It is straightforward to verify the other conclusions of Proposition~\ref{prop:RSflow}. 
\qed

An auxiliary property of this flow is useful: $|E(t)\symdif E^\star|$ is a nonincreasing function
of $t$. This holds, by inspection, for finite unions of intervals, and follows for general sets
by continuity of $t\mapsto E(t)$.

We record in passing a smoothing property of this flow, 
which is not used in the proofs of our main results.
It is established in \S\ref{section:flowtointervals}.

\begin{proposition} \label{prop:flowtointervals}
Let $E\subset\reals^1$ be a Lebesgue measurable set with finite measure.
For each $t>0$, $E(t)$ equals a union of intervals.
\end{proposition}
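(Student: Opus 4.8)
The plan is to deduce the proposition from a quantitative ``gap--closing'' property of the flow on finite unions of intervals, and then to transfer that property through the $L^1$ limit by which the flow on arbitrary sets is defined. First I would record the reduction: it suffices to show that for every bounded interval $J$ and every $t>0$ the set $E(t)\cap J$ is equivalent to a finite union of intervals, since applying this with $J=(-k,k)$ for all $k\in\naturals$ exhibits $E(t)$ as equivalent to an open subset of $\reals$, hence to a countable disjoint union of open intervals.

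I would next analyze the flow on a finite union $E$ of pairwise disjoint closed intervals. From its explicit description in \cite{BLL}, \cite{liebloss} the flow slides the components of $E$ toward one another, creating no new gaps between components, so that $E(t)$ is again a finite union of intervals and — after correcting by the translation that places $E(1)=E^\star$ — the collection of complementary intervals of $E(t)$ arises from that of $E$ by shrinking each length at a controlled rate. In particular, for each $t>0$ there is a threshold $\theta(t)>0$, depending on $t$ and on $E$ only through elementary quantities such as the length of its longest complementary interval, below which every complementary interval of $E$ has already disappeared in $E(t)$. One then invokes the elementary observation that a set of finite measure has, inside a bounded interval $J$, only finitely many complementary intervals of length exceeding any fixed positive number, their lengths being summable with sum at most $|J|$. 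Combining these, for $t>0$ the set $E(t)\cap J$ has at most $N=N(t,J,E)$ components.

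To pass to a general measurable $E$ with $|E|<\infty$, I would take finite unions of intervals $E_n$ with $|E_n\symdif E|\to 0$; then $|E_n(t)\symdif E(t)|\to 0$ by the contraction property in Proposition~\ref{prop:RSflow}. Provided the bound $N$ can be arranged uniform in $n$, so that the number of components of $E_n(t)$ meeting $J$ stays bounded, one passes to a subsequence along which the finitely many left and right endpoints of those components converge and concludes that $E(t)\cap J$ is itself a finite union of (possibly degenerate) intervals, as required.

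The crux, and the place where real work is needed, is this interface between the explicit flow and the limiting construction: every structural statement about $E(t)$ must be drawn from statements about the $E_n(t)$ that are quantitatively uniform in $n$, yet for an arbitrary approximating sequence the complementary intervals of $E_n$ need bear no relation to those of $E$. I would resolve this by choosing the approximants adapted to $E$ — for instance from open sets $U_m\supseteq E$ with $|U_m\setminus E|\to 0$ and finite subunions thereof, using the inclusion monotonicity in Proposition~\ref{prop:RSflow} to compare the flows and to keep $\theta(t)$ under uniform control — or by establishing the gap--closing estimate in a form stable under small $L^1$ perturbations of the initial set. A secondary, routine nuisance will be bookkeeping the translations so that the description of $E_n(t)$ via shrinking complementary intervals is literally valid and the $L^1$ limit lands on $E(t)$ rather than on a translate of it.
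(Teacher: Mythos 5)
There is a genuine gap, and it sits exactly where your argument draws its quantitative strength: the claim that for each $t>0$ there is a threshold $\theta(t)>0$, determined by coarse data such as the length of the longest complementary interval of $E$, below which every complementary interval has closed by time $t$. This is false. Under the flow of a finite union of intervals, a gap of length $g$ between two consecutive components of lengths $\ell_1,\ell_2$ shrinks at rate equal to the difference of the centers of those components; for an isolated pair this gives $g'=-\bigl(g+\tfrac12(\ell_1+\ell_2)\bigr)$, so the gap survives until time $\log\bigl(1+2g(0)/(\ell_1+\ell_2)\bigr)$. A short gap flanked by much shorter components therefore persists for an arbitrarily long time, and no threshold of the kind you describe exists: whether a gap closes by time $t$ depends on the lengths of the adjacent components, not on the gap length alone. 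For the same reason your target statement --- that $E(t)\cap J$ has finitely many components for every bounded $J$ --- is stronger than the proposition (which asserts only a countable union of intervals) and appears to be false in general, e.g.\ for configurations in which the component lengths decay much faster than the intervening gaps. The correct quantitative input is a \emph{density} criterion rather than a gap-length criterion: the paper's Lemma~\ref{lemma:compression} shows that if $|E\cap I|\ge(1-\delta)|I|$ then the image of $E\cap I$ under the flow has coalesced into a single interval by every time $t>2\delta(1-2\delta)^{-1}$, and the Lebesgue density theorem supplies, for each $\eps>0$, disjoint intervals $I_j$ in which $E$ has density $\ge 1-\eps$ and which cover $E$ up to a set of measure $<\eps$.

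The second gap is the one you flag yourself: the uniformity in $n$ of the component count for the approximants $E_n(t)$, without which the passage to the limit collapses. Neither proposed fix is carried out, and the failure of the gap-length threshold makes both unpromising as stated; inclusion monotonicity from Proposition~\ref{prop:RSflow} controls which points belong to $E_n(t)$ but says nothing about how many components they form. The paper sidesteps the approximation entirely at this stage by defining a measure-preserving pointwise flow $\psi_{E,t}:E\to E(t)$ directly for an arbitrary measurable $E$, applying the compression lemma to each high-density interval $I_j$ to conclude that $\psi_{E,t}(E\cap I_j)$ is an interval for $t>2\eps(1-2\eps)^{-1}$, and thereby exhibiting $E(t)$ as a countable union of intervals together with a set of measure $<\eps$ for every $\eps>0$, which suffices. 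If you want to salvage your outline, you would need to replace the gap-closing heuristic by such a density-based lemma and run it on $E$ itself rather than on approximating finite unions.
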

That is, there exists a countable family of intervals $I_n$ 
such that $|E(t) \symdif \cup_n I_n|=0$.

A less canonical, but still useful, higher-dimensional analogue of Proposition~\ref{prop:RSflow}
can be constructed by combining this flow with iterated Steiner symmetrization.
The next lemma, used in this construction, is proved in \cite{BLL} and in \cite{liebloss}.
\begin{lemma}\label{lemma:iteratedSteiner}
Let $d\ge 1$. Let $\bE$ be  an ordered triple of bounded Lebesgue measurable subsets of $\reals^d$,
each with positive, finite measure. 
There exists a sequence $\scripto_n\in O(d)$
such that the sequence of iterated Steiner symmetrizations defined recursively by
$E_{0,j}=E_j$ and $E_{n,j} = (E_{n-1,j})^\dagger_{\scripto_n}$ satisfies
\[ \lim_{n\to\infty} |E_{n,j}\symdif E_j^\star| = 0\ \text{ for each $j\in\{1,2,3\}$.}\]
\end{lemma}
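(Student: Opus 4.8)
The plan is to follow the classical argument of Brascamp--Lieb--Luttinger and Lieb--Loss, adapted to handle a finite ordered tuple of sets simultaneously rather than a single set. Fix the bounded sets $E_j$, $j\in\{1,2,3\}$, each of positive finite measure; since Steiner symmetrization preserves measure and maps bounded sets into sets contained in a fixed ball (of radius controlled by $\max_j\diameter(E_j)$ and $\max_j|E_j|$), we may work throughout inside a single large ball $B_R\subset\reals^d$. The key monotone quantity is the ``moment of inertia'' $M(F)=\int_F |x|^2\,dx$: for any Lebesgue measurable $F\subset B_R$ and any $\scripto\in O(d)$ one has $M(F^\dagger_\scripto)\le M(F)$, with equality (for a set contained in a ball about the origin) only when $F$ is, up to a null set, symmetric under the reflection defining that Steiner symmetrization. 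Moreover $M(F)\ge M(F^\star)$ always, with equality if and only if $F\equiv F^\star$.

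First I would set up the selection of the directions $\scripto_n$. At stage $n$, having produced $E_{n-1,j}$, define the total deficit $\delta_{n-1}=\sum_{j=1}^3\big(M(E_{n-1,j})-M(E_j^\star)\big)\ge 0$, which is finite since everything lives in $B_R$; by the previous paragraph the sequence $n\mapsto\sum_j M(E_{n,j})$ is nonincreasing, so $\delta_n$ is nonincreasing and converges to some $\delta_\infty\ge 0$. Choose $\scripto_n\in O(d)$ to nearly maximize the one-step decrease, e.g.\ so that $\sum_j\big(M(E_{n-1,j})-M((E_{n-1,j})^\dagger_{\scripto_n})\big)\ge \tfrac12\sup_{\scripto\in O(d)}\sum_j\big(M(E_{n-1,j})-M((E_{n-1,j})^\dagger_{\scripto})\big)$. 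Summability of the one-step decreases then forces $\sup_{\scripto}\sum_j\big(M(E_{n-1,j})-M((E_{n-1,j})^\dagger_{\scripto})\big)\to 0$ as $n\to\infty$.

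Next I would extract a convergent subsequence and identify the limit. By weak-$*$ compactness of $\{\one_F: F\subset B_R\}$ in $L^\infty(B_R)$ (equivalently, $L^1$ weak compactness, since these functions are $0/1$ valued and uniformly bounded), pass to a subsequence along which $\one_{E_{n,j}}\rightharpoonup g_j$ for each $j$, with $0\le g_j\le 1$ and $\int g_j=|E_j|$. Since $M$ is (lower semi)continuous under this convergence and the $M$-values converge, one gets control on the limit; the crucial point is that the vanishing of $\sup_\scripto$(one-step decrease) forces each $g_j$ to be invariant under \emph{every} reflection through the origin, hence radially symmetric and radially nonincreasing, hence $g_j=\one_{E_j^\star}$ because $g_j$ is additionally $0/1$-valued (a property preserved in the weak limit since $\int g_j^2\le\liminf\int\one_{E_{n,j}}^2=|E_j|=\int g_j$ forces $g_j\in\{0,1\}$ a.e.). Weak convergence of indicator functions to an indicator function upgrades to $L^1$ convergence, i.e.\ $|E_{n,j}\symdif E_j^\star|\to 0$ along the subsequence; finally, monotonicity of $\delta_n$ promotes this from the subsequence to the full sequence (the deficit was already shown to have a limit, and along the subsequence that limit is $0$).

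The main obstacle is the rigidity/quantitative step: making precise that ``the supremum over $\scripto\in O(d)$ of the one-step moment-of-inertia decrease tends to $0$'' genuinely forces the weak limit to be radial. One clean way is to use the identity expressing $M(F)-M(F^\dagger_\scripto)$ in terms of the variance of the one-dimensional slices of $F$ in the direction $\scripto^{-1}e_d$ about their barycenters versus about the origin, integrate the resulting inequalities over all directions $\scripto$, and apply an approximate-equality analysis in the one-dimensional case; alternatively one can invoke the known fact (Lemma as in \cite{BLL},\cite{liebloss}) that \emph{some} sequence of symmetrizations works for a single set and combine the three sets by an interleaving/diagonal argument, since the distance $|E_{n,j}\symdif E_j^\star|$ is itself nonincreasing under each Steiner symmetrization (a fact we may cite, being the higher-dimensional analogue of the auxiliary monotonicity noted after Proposition~\ref{prop:RSflow}). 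I would present the interleaving version as the primary argument, as it sidesteps the sharp one-dimensional rigidity analysis entirely: symmetrize to improve $E_{n,1}$ toward $E_1^\star$, then $E_{n,2}$, then $E_{n,3}$, cyclically, using at each sub-step a direction furnished by the single-set lemma applied to the current worst set, and use the contractivity $|F^\dagger_\scripto\symdif G^\dagger_\scripto|\le |F\symdif G|$ (here with $G$ a ball centered at $0$, so $G^\dagger_\scripto=G$) to ensure that progress already made on the other two coordinates is not undone.
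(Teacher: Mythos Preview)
The paper does not supply its own proof of this lemma; it simply cites \cite{BLL} and \cite{liebloss}, so there is no in-paper argument to compare against.

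Your \emph{interleaving} argument --- the one you rightly identify as primary --- is correct and is the cleanest way to pass from the single-set result (which is what \cite{BLL} and \cite{liebloss} actually prove) to a triple. The key ingredient is the $L^1$--contractivity of Steiner symmetrization, $|F^\dagger_\scripto\symdif G^\dagger_\scripto|\le|F\symdif G|$, specialized to $G=E_j^\star$ (a ball centered at $0$, hence fixed by every $(\cdot)^\dagger_\scripto$). This guarantees that each $|E_{n,j}\symdif E_j^\star|$ is nonincreasing along the iteration, so you may work on the three sets in turn: apply finitely many symmetrizations furnished by the single-set lemma to bring the current $E_{\cdot,1}$ within $2^{-k}$ of $E_1^\star$, then do the same for the second and third sets, and repeat with $2^{-(k+1)}$. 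No progress is ever lost. This is a complete proof once you cite the single-set version.

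By contrast, your moment-of-inertia/compactness sketch has a genuine gap. The claim that the weak-$*$ limit $g_j$ is $\{0,1\}$--valued is not justified by the inequality you wrote: from weak lower semicontinuity you only get $\int g_j^2\le\liminf\int\one_{E_{n,j}}^2=\int g_j$, which is automatic for any $0\le g_j\le 1$ and does \emph{not} force $g_j(1-g_j)=0$ a.e.\ (that would require the reverse inequality). Weak limits of indicator functions need not be indicators. The argument can be repaired --- for instance, once you know the limiting moment of inertia equals $M(E_j^\star)$, the bathtub principle among all $0\le g\le 1$ with $\int g=|E_j^\star|$ identifies $g_j=\one_{E_j^\star}$ uniquely --- but as written the step is wrong, and the preceding step (transferring ``small one-step decrease'' from the sequence to invariance of the weak limit) is also left vague. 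Since your interleaving route avoids all of this, I would simply drop the compactness discussion.
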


In higher dimensions there exists a natural flow $t\mapsto E(t)$ satisfying $E(0)=E$ and
$E(1)=E^\dagger$, the Steiner symmetrization of $E$. 
For each $x'\in\reals^d$,
define $E_j(t)\subset\reals^d$ by setting the fiber $\{x_d\in\reals: (x',x_d)\in E_j(t)\}$ 
equal to the flow defined above, at time $t$,
of the fiber $\{x_d\in\reals: (x',x_d)\in E_j\}$.
We call this the Steiner flow in the proof of Proposition~\ref{prop:RSflowd>1}.

A property of Steiner flow is that 
\begin{equation}\label{eq:daggermonotonicity} |E(t)\symdif E^\dagger| \text{ is a nonincreasing function of $t$.}
\end{equation}
This property, for $d>1$, is an immediate consequence of the case $d=1$.

The next result asserts the existence of a flow with corresponding properties
for subsets of $\reals^d$, for arbitrary $d>1$.
This higher-dimensional analogue is not canonical; its construction involves
certain choices; but it is sufficient for our purpose.

\begin{proposition}\label{prop:RSflowd>1}
Let $d\ge 1$.
For $j\in\{1,2,3\}$ let
$E_j\subset\reals^d$ be a bounded Lebesgue measurable set. 
There exist mappings $[0,1]\owns t\mapsto E_j(t)$
of equivalence classes of Lebesgue measurable subsets
of $\reals^d$, with the following properties: 
\begin{enumerate}
\item
$E_j(0)=E_j$ and $E_j(1)=E_j^\star$.
\item 
$|E_j(t)| = |E_j|$ for all $t\in[0,1]$.
\item
$|E_j(s)\symdif E_j(t)|\to 0$ as $s\to t$.
\item 
The function $t\mapsto \scriptt(\bE(t))$ is continuous and nondecreasing on $[0,1]$.
\end{enumerate}
\end{proposition}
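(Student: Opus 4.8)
The plan is to build the higher-dimensional flow by concatenating finitely many Steiner flows, using Lemma~\ref{lemma:iteratedSteiner} to choose the directions, and then to arrange the parametrization so that all the desired properties survive. First I would fix the triple $\bE=(E_1,E_2,E_3)$ and invoke Lemma~\ref{lemma:iteratedSteiner} to obtain a sequence $\scripto_n\in O(d)$ for which the iterated Steiner symmetrizations $E_{n,j}=(E_{n-1,j})^\dagger_{\scripto_n}$ converge in symmetric difference to $E_j^\star$. This gives a countable, rather than finite, sequence, so the naive concatenation would use up all of $[0,1)$ before reaching the ball; I would instead run the $n$-th Steiner flow on the time interval $[1-2^{-(n-1)},\,1-2^{-n}]$ (reparametrizing the unit-time Steiner flow linearly onto that subinterval) for $n=1,2,\dots$, and simply declare $E_j(1)=E_j^\star$. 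Properties (1), (2) are then immediate: $E_j(0)=E_j$ by construction, measure is preserved at every stage because each Steiner flow preserves one-dimensional fiber measures hence $d$-dimensional measure, and $E_j(1)=E_j^\star$ by the convergence from the lemma together with the continuity argument below.

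For the continuity statement (3), on each half-open subinterval $[1-2^{-(n-1)},1-2^{-n})$ continuity is inherited directly from the case $d=1$ part of Proposition~\ref{prop:RSflow} applied fiberwise (and Fubini to pass from fiberwise $L^1$ convergence to $L^1$ convergence in $\reals^d$, after dominating by the fixed finite quantity $|E_j|$). The only delicate point is continuity at the accumulation point $t=1$: here I would use the monotonicity property \eqref{eq:daggermonotonicity}, namely that within the $n$-th stage $|E_j(t)\symdif E_{n,j}|$ is nonincreasing, to get $|E_j(t)\symdif E_{n,j}|\le |E_{n-1,j}\symdif E_{n,j}|$ for $t$ in the $n$-th interval; combining this with the triangle inequality and $|E_{n,j}\symdif E_j^\star|\to 0$ shows $|E_j(t)\symdif E_j^\star|\to 0$ as $t\to 1^-$. (One can also observe that $|E_{n-1,j}\symdif E_j^\star|$ is forced to decrease along the sequence, since Steiner symmetrization in any direction does not increase distance to the ball, which is the $d$-dimensional analogue of the auxiliary property noted after Proposition~\ref{prop:RSflow}; I would record this analogue explicitly if needed.)

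The heart of the matter is property (4): monotonicity and continuity of $t\mapsto\scriptt(\bE(t))$. Continuity on $[0,1]$ follows from (3) together with the elementary fact that $\scriptt$ is Lipschitz with respect to the $L^1$ symmetric-difference metric on triples (since $|\scriptt(\bE)-\scriptt(\bF)|$ is bounded by a sum of terms each controlled by one $|E_j\symdif F_j|$ times the measures of the other two sets, all of which stay bounded along the flow). Monotonicity reduces to the single-step assertion that the Steiner flow in a fixed direction $\scripto$ does not decrease $\scriptt$; by rotation invariance of $\scriptt$ (here I use that $\lambda$ is preserved by the diagonal action of $O(d)$ on $\Sigma$, since $x_1+x_2+x_3=0$ is $O(d)$-invariant and the transformation is volume preserving) it suffices to treat $\scripto=\mathrm{id}$, and then the Steiner flow acts only in the last coordinate. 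Writing $x_j=(x_j',x_{j,d})$ and using Fubini to integrate first over the $(d-1)$-dimensional slice variables, $\scriptt(\bE(t))$ becomes an integral over $\{(x_1',x_2',x_3'): \sum x_j'=0\}$ of the one-dimensional quantity $\scriptt_1$ of the three fibers $(E_j(t))_{(x_j')}$; but for fixed slices $(x_1',x_2',x_3')$ the three fibers evolve precisely by the one-dimensional flow of Proposition~\ref{prop:RSflow}, and conclusion (7) of that proposition says the resulting one-dimensional functional is nondecreasing in $t$. Integrating a pointwise-nondecreasing family over the slice variables preserves monotonicity. The main obstacle I anticipate is purely bookkeeping: carefully justifying the Fubini interchange and the measurability of the slice-wise objects, and handling the accumulation of infinitely many stages at $t=1$ so that continuity there is genuinely established rather than assumed; the monotonicity itself is then a clean consequence of the one-dimensional result. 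Finally, for $d=1$ the proposition is just a restatement of Proposition~\ref{prop:RSflow}, so the $d\ge 1$ phrasing is consistent.
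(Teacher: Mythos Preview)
Your proposal is correct and follows essentially the same approach as the paper: concatenate the Steiner flows in the directions furnished by Lemma~\ref{lemma:iteratedSteiner} over the dyadic subintervals $[1-2^{-(n-1)},1-2^{-n}]$, define $E_j(1)=E_j^\star$, and use \eqref{eq:daggermonotonicity} together with $|E_{n,j}\symdif E_j^\star|\to 0$ for continuity at $t=1$. You supply more detail than the paper does (the explicit Fubini reduction of the monotonicity of $\scriptt$ to the one-dimensional conclusion~(7), and the triangle-inequality bookkeeping at $t=1$), but the architecture and the key inputs are identical.
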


\begin{proof}
Let $\bE$ be given.
Let $(\scripto_n: n\in\naturals)$ be as in Lemma~\ref{lemma:iteratedSteiner}.
Define the flow $t\mapsto \bE(t)$ for $t\in[0,\tfrac12]$
to be the Steiner flow of $E$,
conjugated with the rotation $\scripto_1$,
with the time rescaled so that $\bE_1$ is reached at $t=\tfrac12$ rather than at $t=1$.
Next, define the flow for $t\in[\tfrac12,\tfrac34]$ so that $\bE(\tfrac34)=\bE_2$,
by using the same construction, conjugated by $\scripto_2$.
Use the time interval $[1-2^{-k},1-2^{-k-1}]$ in the same way
to make a continuous deformation from $\bE_{k}$ to $\bE_{k+1}$
for each $k\in\naturals$.
Define $E(1)=E^\star$.
The flow thus defined is clearly continuous on $[0,1)$, and 
$\scriptt(\bE(t))$ is a nondecreasing function of $t$.

From \eqref{eq:daggermonotonicity} and the property that
$|E_{n,j}\symdif E_j^\star|\to 0$ as $n\to\infty$
it follows that
$|E_j(t)\symdif E_j^\star|\to 0$ as $t\to 1$ from below.
Thus defining $E_j(1)=E_j^\star$,  
$t\mapsto \scriptt(\bE(t))$ becomes a
continuous function on the closed interval $[0,1]$.
\end{proof}

\section{Reduction to small measure perturbations}

In order to establish Theorem~\ref{thm:RSsharpenedscriptt}, it suffices to prove it in the following
perturbative regime.

\begin{proposition}\label{prop:1}
For each $d\ge 1$ and each $\rho>0$ there exist $\delta_0>0$ and $c>0$ such that
the inequality 
\begin{equation*} 
\scriptt(\bE)\le \scriptt(\bEstar) - c \Dist(\bE,\scripto(\bEstar))^2  \end{equation*}
holds
for each $\rho$--strictly admissible ordered triple $\bE$
of Lebesgue measurable subsets of $\reals^d$ satisfying
\begin{equation}
\dist(\bE,\scripto(\bEstar))\le\delta_0 \max_{1\le j\le 3} |E_j|.
\end{equation}
\end{proposition}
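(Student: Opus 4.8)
The plan is to work in the perturbative regime, linearize the functional $\scriptt$ around $\bEstar$, and prove a spectral-gap estimate for its second variation transverse to the orbit $\scripto(\bEstar)$.

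\textbf{Normalization and the basic expansion.} First I would normalize. Using the affine invariance of $\scriptt$ and of the distance, replace $\bE$ by $\psi^{-1}(\bE)$ for a near-optimal measure-preserving $\psi$ and translate by a compatible triple, so that the near-best element of $\scripto(\bEstar)$ becomes $\bEstar=(B_1,B_2,B_3)$ itself ($B_j=E_j^\star$), and $\delta:=\dist(\bE,\scripto(\bEstar))=\max_j|E_j\symdif B_j|$; here $\Dist$ agrees with $\dist$ up to the reflection $E_3\mapsto-E_3$, which fixes $\bEstar$. By the homogeneities $\scriptt(\lambda\bE)=\lambda^{2d}\scriptt(\bE)$, $\Dist(\lambda\bE,\cdot)=\lambda^d\Dist(\bE,\cdot)$ I may also take $\max_j|E_j|=1$, so the radii $r_j$ stay in a compact subset of $(0,\infty)$ fixed by $\rho$. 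Writing $f_j=\one_{E_j}-\one_{B_j}$ (a $\{-1,0,1\}$-valued function with $\norm{f_j}_{L^1}=|E_j\symdif B_j|\le\delta$ and $\int f_j=0$) and expanding $\prod_j(\one_{B_j}+f_j)$ inside $\int_\Sigma\cdot\,d\lambda$ gives
\[
\scriptt(\bEstar)-\scriptt(\bE)=\sum_{j}(-L_j)-\sum_{j<k}Q_{jk}-C ,
\]
with $L_j=\int f_j\Psi_j$ ($\Psi_j$ the reflection of $\one_{B_k}*\one_{B_l}$), $Q_{jk}=\int_\Sigma f_j(x_j)f_k(x_k)\one_{B_l}(x_l)\,d\lambda$, $C=\int_\Sigma f_1f_2f_3\,d\lambda$. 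Because of the normalization, $B_j=\{\Psi_j>\mu_j\}$ where $\mu_j$ is the constant value of $\Psi_j$ on $\partial B_j$; since $\int f_j=0$, $L_j=\int f_j(\Psi_j-\mu_j)$, and since $E_j\setminus B_j\subset\{\Psi_j\le\mu_j\}$, $B_j\setminus E_j\subset\{\Psi_j\ge\mu_j\}$, I get the decisive sign
\[
-L_j=\int_{E_j\symdif B_j}|\Psi_j-\mu_j|\ \ge\ 0 ,
\]
while crudely $|Q_{jk}|\le\norm{f_j}_{L^1}\norm{f_k}_{L^1}\le\delta^2$ and $|C|\le\delta^2$.

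\textbf{Reduction to boundary-concentrated perturbations.} Strict admissibility keeps the triangle with sides $r_1,r_2,r_3$ nondegenerate; a short computation then shows $\Psi_j$ is smooth near $\partial B_j$ with $|\nabla\Psi_j|\ge c(\rho)>0$ there, and $\mu_j$ bounded away from $0$ and from $\min_{k\ne j}|B_k|$. Hence $|\Psi_j-\mu_j|\gtrsim_\rho\min(\mathrm{dist}(x,\partial B_j),c(\rho))$ near $\partial B_j$ and $|\{|\Psi_j-\mu_j|<s\}|\lesssim_\rho s$. Fixing a large $K=K(\rho)$, I would split into cases. If $\max_j(-L_j)\ge K\delta^2$, then using $-L_j\ge0$ for all $j$, $\scriptt(\bEstar)-\scriptt(\bE)\ge(K-4)\delta^2\ge\delta^2$ and we are done. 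Otherwise $-L_j<K\delta^2$ for all $j$; by transversality the part of $E_j\symdif B_j$ outside the $A(\rho)\delta$-neighborhood of $\partial B_j$ has measure $\le\epsilon(\rho)\delta$, once $A(\rho)$ is large. Write $E_j\symdif B_j=G_j\sqcup J_j$ with $G_j$ in that neighborhood and $|J_j|\le\epsilon(\rho)\delta$, and encode $G_j$ by a signed radial-length density $h_j\colon\partial B_j\to\reals$; then $\norm{h_j}_\infty\lesssim_\rho\delta$, $\norm{h_j}_{L^1(\partial B_j)}\asymp_\rho|G_j|$, and $\norm{h_j}_{L^2}^2\le\norm{h_j}_\infty\norm{h_j}_{L^1}\lesssim_\rho\delta^2$.

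\textbf{The Hessian form and its coercivity.} Next I would Taylor-expand. Since $\Psi_j-\mu_j$ vanishes on $\partial B_j$, the linear term becomes quadratic: $-L_j\ge c\,\norm{h_j}_{L^2(\partial B_j)}^2-O_\rho(\delta^3)$ (dropping the nonnegative contribution of $J_j$); replacing thin shells by spheres in $Q_{jk}$ costs $O_\rho(\delta^3)$; and $C=O_\rho(\delta^3)$ because the thin shells $G_j$ meet transversally. Thus
\[
\scriptt(\bEstar)-\scriptt(\bE)\ \ge\ \mathcal Q(h,h)-O_\rho\!\bigl(\delta^3+\epsilon(\rho)\delta^2\bigr),
\qquad
\mathcal Q(h,h)=c\sum_j\norm{h_j}_{L^2(\partial B_j)}^2-\sum_{j<k}\widetilde Q_{jk}[h_j,h_k],
\]
where $\widetilde Q_{jk}$ has the bounded zonal kernel $r_j^{d-1}r_k^{d-1}\one_{|r_j\omega+r_k\omega'|<r_l}$ on $S^{d-1}\times S^{d-1}$, and $c=c(\rho)>0$. (That the three coefficients coincide is forced: $\mathcal Q$ is nonnegative on measure-preserving $h$, being the second-order coefficient of $\scriptt(\bEstar)-\scriptt(\bEstar_t)$ for a measure-preserving graph deformation $\bEstar_t$, which the Riesz--Sobolev inequality bounds by $\scriptt(\bEstar)$; and $\mathcal Q$ vanishes on the translations $\{\sum_j v_j=0\}$; these are incompatible unless the degree-one block is a positive multiple of the all-ones matrix, which is the identity that $r_j^{\,d-1}|\nabla\Psi_j|$ on $\partial B_j$ equals a constant times $(\text{area of the } r\text{-triangle})^{d-1}$, independent of $j$.) Now $\mathcal Q=cI-\mathcal K$ with $\mathcal K$ Hilbert--Schmidt, so $\mathcal Q$ has essential spectrum $\{c\}$, and by $O(d)$-equivariance it block-diagonalizes over spherical-harmonic degrees into symmetric $3\times3$ matrices $A_\ell$. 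Granting that $\ker\mathcal Q$ equals the tangent space $T$ of $\scripto(\bEstar)$ at $\bEstar$ --- spanned by the degree-one translation directions and the degree-two directions $h_j=r_jq$, $q\in H_2$, coming from measure-preserving linear maps --- one gets from $\mathcal Q\ge0$ and the essential-spectrum description that $\mathcal Q(h,h)\ge\lambda(\rho)\,\mathrm{dist}_{L^2}(h,T)^2$, with $\lambda(\rho)>0$ uniform by continuity in the compact range of radii. Finally, a near-best orbit element differs from an element of $T$ only by quadratic error, so $\mathrm{dist}_{L^2}(h,T)\gtrsim_\rho\Dist(\bE,\scripto(\bEstar))-\epsilon(\rho)\delta-O_\rho(\delta^2)\ge\tfrac12\delta$. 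Choosing $\epsilon(\rho)$ small relative to $\lambda(\rho)$, then $\delta_0$ small to absorb the $O_\rho(\delta^3)$, yields $\scriptt(\bEstar)-\scriptt(\bE)\ge c(\rho)\,\Dist(\bE,\scripto(\bEstar))^2$.

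\textbf{Main obstacle.} The crux is the identification $\ker\mathcal Q=T$: that the Hessian of the Riesz--Sobolev functional at $\bEstar$ has no null directions beyond those produced by the affine group and by translations summing to zero. The inclusion $T\subseteq\ker\mathcal Q$ is immediate from the invariances of $\scriptt$, and the degree-one block is pinned down exactly as above; but the reverse inclusion seems to require the explicit Funk--Hecke (Gegenbauer) coefficients of the kernels $\one_{\omega\cdot\omega'<t_{jk}}$, together with the verification that $A_2$ has one-dimensional kernel $\reals(r_1,r_2,r_3)$ and that $A_\ell$ is positive definite for all $\ell\ge3$ (uniformly in $\ell$ --- routine once the coefficients are known to tend to $0$ as $\ell\to\infty$, but a genuine computation for small $\ell$). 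This infinitesimal rigidity is the counterpart of Burchard's uniqueness theorem and is where I expect the main difficulty; everything else (the normalization, the dichotomy, the remainder estimates, the spectral-gap-from-compactness step) is routine though lengthy. An alternative route to this rigidity would be to induct on the dimension, using the Steiner symmetrizations of Lemma~\ref{lemma:iteratedSteiner} together with the sharp one-dimensional estimate of \cite{christRS3} fiberwise and a summation argument; the flow of Proposition~\ref{prop:RSflowd>1} is not needed for the argument above.
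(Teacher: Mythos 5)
Your outline reproduces the paper's general architecture (normalize, expand around $\bEstar$, use the sign of the linear terms to confine $E_j\symdif B_j$ to a thin shell about $\partial B_j$, then study a quadratic form block-diagonalized by spherical harmonics), but it stops exactly at the step that carries the real content of Proposition~\ref{prop:1}. You write ``Granting that $\ker\mathcal Q$ equals the tangent space $T$'' and then concede in your final paragraph that the transverse coercivity --- positive definiteness of the degree-$2$ block off the line generated by the affine directions, and of all blocks of degree $\ge 3$ --- is unproved, proposing to obtain it from explicit Funk--Hecke/Gegenbauer coefficients of the kernels $\one_{\omega\cdot\omega'<t}$. That is precisely the route the paper's Interlude explains is unavailable: computing the eigenvalues $\lambda_k(\rho)$ for all admissible $\rho$ amounts to an explicit antiderivative of $P_k(t)(1-t^2)^{(d-3)/2}$, i.e.\ an explicit formula for the Gegenbauer polynomials themselves. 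The paper closes this gap by an entirely different mechanism (Lemmas~\ref{lemma:nge3} and \ref{lemma:n=2}): for each triple $\bG$ of degree-$n$ harmonics it constructs the explicit deformation $\bE_\bG(s)$, shows the second-order expansion is exact for these sets (Lemma~\ref{lemma:retrace}), and then proves $\scriptt(\bE_\bG(s))\le\scriptt(\bEstar)-cs^2$ directly by slicing into one-dimensional fibers, invoking the quantitative one-dimensional Riesz--Sobolev gain for intervals whose centers fail to satisfy $c_1+c_2+c_3=0$, and an algebraic lemma (Lemmas~\ref{lemma:algebra:atlast}, \ref{lemma:algebra:atlast2}) showing that after a suitable rotation the associated polynomial $\Psharp(\scripto(\bG))$ is not identically zero whenever $\bG\ne 0$ has degree $\ge 3$, or degree $2$ with $G_1=0$. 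Without this (or a genuine substitute --- your Steiner-symmetrization induction is only a one-sentence hope), your argument is conditional on its central estimate.

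Two further points would need repair even granting the spectral gap. First, you encode the shell perturbation by a \emph{signed} radial density $h_j$, and claim $\norm{h_j}_{L^1}\asymp|G_j|$ and, later, $\dist_{L^2}(h,T)\gtrsim\Dist(\bE,\scripto(\bEstar))$; both fail in general, since along a single ray equal bites inside and outside $\partial B_j$ cancel in $h_j$ while contributing fully to $|E_j\symdif B_j|$. The paper avoids this by carrying $F_j^+$ and $F_j^-$ separately: the linear term yields $-\tfrac12\gamma_jr_j^{1-d}(\norm{F_j^+}_{L^2}^2+\norm{F_j^-}_{L^2}^2)$, which does control $|E_j\symdif B_j|^2$ on a thin shell, while the cross terms involve only $F_j=F_j^+-F_j^-$. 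Second, you dispose of the tangential component of $h$ by asserting that near-optimality of the chosen orbit representative forces $\dist_{L^2}(h,T)\ge\tfrac12\delta$; this is not proved, and it is delicate because the orbit distance is a non-smooth $L^1$-type quantity. The paper instead removes the dangerous low-degree components \emph{exactly}, by a further measure-preserving affine map produced via the implicit function theorem (Lemmas~\ref{lemma:balanced}, \ref{lemma:degree1balanced}, \ref{lemma:centering}); note the dimension count there: the symmetry group only suffices to annihilate degree $1$ for two of the three sets and degree $2$ for one of them, which is exactly why the residual degree-$2$ estimate of Lemma~\ref{lemma:n=2} is needed, and why your description of the degree-$2$ kernel needs to be checked against that reduction rather than assumed.
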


In this section, we show how Theorem~\ref{thm:RSsharpenedscriptt} 
is a consequence of Proposition~\ref{prop:1}, which in turn will be proved below. 
We may suppose without loss of generality that $\max_j|E_j|=1$. For if $rE_j=\{rx: x\in E_j\}$
and $r\bE = (rE_1,rE_2,rE_3)$
then 
\[ \frac{\scriptt(r\bE)}{\max_j|rE_j|^2}
= \frac{\scriptt(\bE)}{\max_j|E_j|^2}\]
and $\dist\big( (rE_1,rE_2,rE_3),\,\scripto\big((rE_1)^\star,(rE_2)^\star,(rE_3)^\star)\big) \big) 
= r^d \dist(\bE,\scripto(\bEstar))$.
Likewise, $\bE$ is $\rho$--strictly admissible if and only if $\bE$ is so.
Thus the conclusion of Theorem~\ref{thm:RSsharpenedscriptt}
holds for $\bE$ with a given constant $c$,
if and only if it holds for $r\bE$, with the same constant $c$.
Choosing $r =\max_j |E_j|^{1/d}$ reduces matters to the case in which $\max_j|E_j|=1$.

Let $\delta_0>0$ and suppose \eqref{eq:RSsharpened2} to be known for all $\rho$--strictly admissible $\bE$
satisfying $\max_j|E_j|=1$ and $\dist(\bE,\scripto(\bEstar))\le\delta_0$.
Let $\bE$ be a $\rho$--strictly admissible ordered triple of bounded sets satisfying  $\max_j|E_j|=1$
and $\dist(\bE,\scripto(\bEstar))>\delta_0\max_j|E_j| = \delta_0$.
Consider $\bE(t) = (E_1(t),E_2(t),E_3(t))$, where $[0,1)\owns t\mapsto \bE(t)$ is the flow of
Proposition~\ref{prop:RSflowd>1}.

The function $[0,1)\owns t\mapsto \dist(\bE(t),\scripto(\bEstar))$ is continuous,
and it tends to $0$ as $t\to 1$ since $|E_j(t)\symdif E_j^\star|\to 0$.
Therefore there exists a smallest $t_0\in(0,1]$ for which $\dist(E(t_0),\scripto(\bEstar))=\delta_0$.
By monotonicity of $\scriptt$ under the threefold flow,
\begin{equation}
\scriptt(\bE) \le \scriptt(\bE(t_0)) \le \scriptt(\bEstar) - c\dist(\bE(t_0),\scripto(\bEstar))^2
=\scriptt(\bEstar)-c\delta_0^2.
\end{equation}
The maximum possible value of $\dist(\bA,\scripto(\bAstar))$, as $\bA$ ranges over all ordered triples
of sets satisfying $\max_j|A_j|=1$, is equal to $2$. Therefore 
$\scriptt(\bEstar)-c\delta_0^2\le \scriptt(\bEstar)-c'\dist(\bE,\scripto(\bEstar))^2$, 
yielding the desired inequality by transitivity.

\section{Reduction to perturbations near the boundary}

Let $\be=(e_1,e_2,e_3)\in (\reals^+)^3$ be given, and suppose that
$(e_j^{1/d}: 1\le j\le 3)$ is $\rho$--strictly admissible.
Let $E_j\subset\reals^d$ be Lebesgue measurable sets satisfying $|E_j|=e_j$. 
Define
\begin{equation} \delta =\dist(\bE,\scripto(\bEstar)).\end{equation}

Suppose that $\max_j|E_j|=1$, and that $\delta$ is small.
Choose $\psi,\bv$ so that $\tilde E_j = \psi(E_j)+v_j$ satisfy 
\begin{equation} \max_j |\tilde E_j \symdif E_j^\star| \le 2\dist(\bE,\scripto(\bEstar)),
\end{equation}
and replace $\bE$ by $(\tilde E_j: 1\le j\le 3)$,
as we may do without affecting the inequality in question.

Let $B_j=E_j^\star$, and let $r_j>0$ be the radius of $B_j$.
Define functions $f_j$ by
\begin{equation} \one_{E_j} = \one_{E_j^\star} + f_j = \one_{B_j}+f_j.\end{equation}
Thus $f_j$ takes values in $\{-1,0,1\}$, $\int f_j=0$,
and the essential support of $f_j$ has measure $\le 2|E_j\symdif E_j^\star| \le 4\delta$.

One has
\[ \scriptt(\bE) = \scriptt(\one_{B_1}+f_1,\,\one_{B_2}+f_2,\,\one_{B_3}+f_3)
= \scriptt(\bE^\star) + \sum_{k=1}^3 \langle K_k,f_k\rangle + O(\delta^2)\]
where the kernels $K_k$ are defined for $k\in\{1,2,3\}$ by \begin{equation} K_k = \one_{B_i}*\one_{B_j}\end{equation}
with the notation $\{1,2,3\} = \{i,j,k\}$.
Each of the functions $K_k$ is radial, nonnegative, and nonincreasing along each ray emanating
from the origin.
Moreover, if $r_i\le r_j$ then $\nabla K_k(x)\ne 0$
whenever $r_j-r_i<|x|<r_j+r_i$.
Strict admissibility of $\bE$ is equivalent to the assertion that 
$r_j-r_i < r_k < r_i+r_j$ for all permutations $(i,j,k)$ of $(1,2,3)$.
Therefore $\nabla K_k(x)$ is nonzero when $|x| = r_k$.
Define $\gamma_k$ by
\begin{equation}\label{eq:gammakdefn} |\nabla K_k(x)|=\gamma_k \text{ when } |x|=r_k.\end{equation}
We will abuse notation mildly by writing $K_k(s)$ for $K_k(x)$ where $|x|=s$.

Since $|E_k\setminus B_k| = |B_k\setminus E_k|$,
$\int f_k=0$ and consequently
\begin{align*} \langle K_k,f_k\rangle 
& = \int (K_k(x)-K_k(r_k))f_k(x)\,dx
\\&
= - \int |K_k(x)-K_k(r_k)|\cdot |f_k(x)|\,dx
\end{align*}
since
the functions $x\mapsto K_k(x)-K_k(r_k)$  and $-f_k$ are both
nonnegative on $B_k$ and nonpositive on its complement. 

Let $\lambda$ be a large positive constant, which is to be independent
of $\delta$ and is to be chosen below.
If $\lambda\delta$ is bounded above by a small constant depending only on $\be$
then it follows from the nonvanishing of $\nabla K_k$ in a neighborhood
of $|x|=r_k$ that 
\begin{align*}
\langle K_k,f_k\rangle 
&\le -c\lambda\delta \int_{|\,|x|-r_k\,|\ge\lambda\delta} |f_k(x)|\,dx
\\&
= -c\lambda\delta \big|\,\{x\in E_k\symdif B_k: |\,|x|-r_k\,|\ge\lambda\delta\}\,\big|.
\end{align*}

We aim to reduce to the case in which $E_k\symdif B_k$
is entirely contained in $\{x: \big|\, |x|-r_k\,\big| \le\lambda\delta\}$
for each index $k$. To accomplish this,
for each index $j\in\{1,2,3\}$ choose a set $E_j^\ddagger$ so that 
\begin{gather*}
|E_j^\ddagger| = |E_j|,
\\
\text{$E_j\symdif B_j$ is the disjoint union of $E_j^\ddagger\symdif B_j$
and $E_j\symdif E_j^\ddagger$}
\\
\{x\in E_j\symdif B_j: \big|\,|x|- r_j\,\big|>\lambda\delta\}
\subset E_j^\ddagger\symdif E_j
\\
|E_j^\ddagger\symdif E_j|\le 2
\big|\{x\in E_j\symdif B_j: \big|\,|x|-r_j\,\big|>\lambda\delta\}\big|.
\end{gather*}
To construct such a set, 
define $S =  \{x\in E_j\symdif B_j: \big|\,|x|-r_j\,\big| \le \lambda\delta\}$,
$S_+ = S\cap (E_j\setminus B_j)$, and $S_- = S\cap (B_j\setminus E_j)$.
If $|S_+|\ge |S_-|$ 
then choose $\tilde S_+\subset S_+$ to be a measurable set satisfying
$|\tilde S_+| = |S_-|$,
and
define $E_j^\ddagger$ by 
\begin{equation} E_j^\ddagger = (B_j\cup \tilde S_+)\setminus S_-.\end{equation}
If on the other hand
$|S_+| < |S_-|$ 
then choose $\tilde S_-\subset S_-$ to be a measurable set satisfying
$|\tilde S_-| = |S_+|$,
and
define $E_j^\ddagger$ by 
\begin{equation} E_j^\ddagger = (B_j\cup S_+)\setminus \tilde S_-.\end{equation}
Then $E_j\symdif B_j$ is the disjoint union of $E_j^\ddagger\symdif B_j$ and $E_j\symdif E_j^\ddagger$,  so
\begin{equation}\label{eq:sumrule} 
|E_j\symdif B_j| = |E_j\symdif E_j^\ddagger| + |E_j^\ddagger\symdif B_j|.\end{equation}
Moreover, $E_j^\ddagger\symdif B_j\subset\{x: \big|\,|x|-r_j\,\big| \le \lambda\delta\}$.

Set $f_j^\ddagger = \one_{E_j^\ddagger}-\one_{B_j}$
and expand \[\one_{E_j} = \one_{B_j} + f_j^\ddagger + (f_j-f_j^\ddagger)
= \one_{E_j^\ddagger} + \tilde f_j\]
where $\tilde f_j = f_j-f_j^\ddagger =\one_{E_j}-\one_{E_j^\ddagger}$.
Thus $\tilde f_j$ takes values in $\{-1,0,1\}$ and has essential support equal to $E_j\symdif E_j^\ddagger$.
Write $\bE^\ddagger = (E_1^\ddagger,E_2^\ddagger,E_3^\ddagger)$.

\begin{lemma} \label{lemma:removeddagger}
Let $d\ge 1$ and $\rho>0$.
There exist $\lambda<\infty$, and $\delta_0,c>0$, with the following property. 
Let $\bE$ be a $\rho$--strictly admissible ordered triple of subsets 
of $\reals^d$ satisfying $\max_j|E_j|=1$
and $\max_j|E_j\symdif E_j^\star|\le\delta_0$.
Define $\bE^\ddagger$ as above. Then
\begin{equation} \label{eq:daggered}
\scriptt(\bE) \le \scriptt(\bEstar) -c \lambda \sum_{i=1}^3 |E_i\symdif E_i^\star| 
\,\cdot\, \sum_{j=1}^3 |E_j\symdif E_j^\ddagger|.
\end{equation} \end{lemma}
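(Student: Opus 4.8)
The plan is to prove \eqref{eq:daggered} by combining the first-order expansion of $\scriptt$ around $\bEstar$ with the decomposition of each $f_j$ into the ``near-boundary'' part $f_j^\ddagger$ and the ``far'' part $\tilde f_j$, using the already-derived lower bounds for the linear terms $\langle K_k,f_k\rangle$. Recall the expansion established just above the lemma statement,
\[
\scriptt(\bE) = \scriptt(\bEstar) + \sum_{k=1}^3 \langle K_k,f_k\rangle + O(\delta^2),
\]
with $\delta = \max_j|E_j\symdif E_j^\star| \le \delta_0$ and the implied constant depending only on dimension (since each sign function $f_j$ is supported on a set of measure $O(\delta)$ and $\scriptt$ is multilinear with $L^\infty$-bounded inputs). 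The $O(\delta^2)$ error will be absorbed at the end by choosing $\delta_0$ small.

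First I would record, for each fixed index $k$, the pointwise bound $|K_k(x) - K_k(r_k)| \ge c\,\gamma_k\,|\,|x|-r_k\,|$ valid on the annulus $|\,|x|-r_k\,| \le \lambda\delta$ (for $\lambda\delta$ below a threshold depending only on $\be$, hence only on $\rho$ after the normalization $\max_j|E_j|=1$), where $\gamma_k>0$ is the gradient magnitude from \eqref{eq:gammakdefn}; this uses $\nabla K_k \ne 0$ near $|x|=r_k$, which is exactly strict admissibility, and $\gamma_k$ is bounded below in terms of $\rho$. Combining this with $\langle K_k, f_k\rangle = -\int |K_k(x)-K_k(r_k)|\,|f_k(x)|\,dx$, and splitting $f_k = f_k^\ddagger + \tilde f_k$ where $\operatorname{supp}\tilde f_k = E_k\symdif E_k^\ddagger \subset \{|\,|x|-r_k\,| > \lambda\delta\}$ and $\operatorname{supp} f_k^\ddagger = E_k^\ddagger \symdif B_k \subset \{|\,|x|-r_k\,| \le \lambda\delta\}$, gives
\[
\langle K_k, f_k\rangle \le -c\,\gamma_k\,\lambda\delta\,|E_k\symdif E_k^\ddagger|,
\]
because on the support of $\tilde f_k$ one has $|K_k(x)-K_k(r_k)| \ge c\gamma_k \lambda\delta$ by monotonicity of $K_k$ along rays (the factor $|\,|x|-r_k\,| > \lambda\delta$ only helps). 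Here I must be slightly careful: for points far outside the annulus $K_k - K_k(r_k)$ could in principle be as small as its value at the outer edge of the support of $f_k$, but since $K_k$ is nonincreasing along rays and $r_i - r_j < r_k < r_i + r_j$, the difference $|K_k(x)-K_k(r_k)|$ is at least the value attained at distance $\lambda\delta$ from $|x|=r_k$, namely $\gtrsim \gamma_k\lambda\delta$; this is where the hypothesis $\delta_0$ small (so that $f_k$ is supported well inside the region where $\nabla K_k \ne 0$) is used. Summing over $k$, and bounding $\gamma_k \ge c(\rho) > 0$,
\[
\sum_{k=1}^3 \langle K_k, f_k\rangle \le -c\lambda\delta \sum_{j=1}^3 |E_j\symdif E_j^\ddagger|.
\]

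Finally I would feed this into the expansion: $\scriptt(\bE) \le \scriptt(\bEstar) - c\lambda\delta \sum_j |E_j\symdif E_j^\ddagger| + C\delta^2$. Since $\delta = \max_i|E_i\symdif E_i^\star|$, we have $\delta^2 \le \delta\sum_i|E_i\symdif E_i^\star|$; however the target error term in \eqref{eq:daggered} is $c\lambda\,(\sum_i|E_i\symdif E_i^\star|)(\sum_j|E_j\symdif E_j^\ddagger|)$, and here one uses $\sum_j|E_j\symdif E_j^\ddagger| \le \sum_j|E_j\symdif B_j| \le 3\delta$ together with $\sum_i |E_i\symdif E_i^\star| \le 3\delta$ to see both the main gain and the error are comparable to $\lambda\delta\sum_j|E_j\symdif E_j^\ddagger|$ up to the factor $\lambda$. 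Concretely: choose $\lambda$ large enough that the constant in the main term dominates; then if $\delta_0$ is small enough (so that $C\delta \le \tfrac12 c\lambda$, say — note $\lambda$ is fixed first, then $\delta_0$), the error $C\delta^2 = C\delta\cdot\delta$ is absorbed into half of $c\lambda\delta\sum_j|E_j\symdif E_j^\ddagger|$ after noting $\delta \le \sum_j|E_j\symdif E_j^\ddagger| + \sum_j|E_j^\ddagger\symdif B_j|$ and that the second sum is controlled — or more simply, absorb $C\delta^2$ using $\delta^2 \lesssim \lambda\delta\sum_j|E_j\symdif E_j^\ddagger|$ is \emph{not} automatic, so instead one keeps a fraction of the gain against $\sum_j|E_j\symdif E_j^\ddagger|$ and separately notes $\delta^2 \le \delta \cdot \sum_i |E_i\symdif E_i^\star|$, recovering \eqref{eq:daggered} with the product structure after replacing $c\lambda\delta$ by $c\lambda \sum_i |E_i\symdif E_i^\star|$ (which differ only by a bounded factor since $\delta = \max_i \le \sum_i \le 3\delta$).

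**The main obstacle** I anticipate is the bookkeeping in this last absorption step: one wants the error term genuinely bounded by $c\lambda (\sum_i|E_i\symdif E_i^\star|)(\sum_j|E_j\symdif E_j^\ddagger|)$ rather than merely by $c\lambda\delta^2$, and this requires being careful that the quadratic remainder in the expansion of $\scriptt$ can itself be bounded by $C\delta \cdot \sum_j|E_j\symdif E_j^\ddagger|$ plus a term that is already negative or negligible — which is true because the part of the perturbation supported near $|x|=r_j$ contributes a quadratic term of size $O((\sum_j|E_j^\ddagger\symdif B_j|)^2)$ that can be ignored, while the cross terms involving $\tilde f_j$ carry a factor $\sum_j|E_j\symdif E_j^\ddagger|$. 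A clean way to handle this is to re-expand $\scriptt(\bE)$ around $\scriptt(\bE^\ddagger)$ instead: $\scriptt(\bE) = \scriptt(\bE^\ddagger) + \sum_k\langle K_k^\ddagger,\tilde f_k\rangle + O(\delta\sum_j|E_j\symdif E_j^\ddagger|)$ where $K_k^\ddagger = \one_{E_i^\ddagger}*\one_{E_j^\ddagger}$, bound $\scriptt(\bE^\ddagger) \le \scriptt(\bEstar)$ by Riesz--Sobolev, and estimate $\langle K_k^\ddagger, \tilde f_k\rangle \le -c\lambda\delta|E_k\symdif E_k^\ddagger|$ by the same ray-monotonicity argument (using that $K_k^\ddagger$ is close to $K_k$ in $L^\infty$ up to $O(\delta)$, hence still has gradient bounded below near $|x|=r_k$). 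This routes around the product-structure subtlety entirely, at the cost of verifying the perturbed kernels $K_k^\ddagger$ inherit the nonvanishing-gradient property, which follows from $\|K_k^\ddagger - K_k\|_{L^\infty} = O(\delta)$ and the quantitative strict admissibility giving a $\rho$-dependent lower bound on $|\nabla K_k|$ near $|x|=r_k$ with room to spare.
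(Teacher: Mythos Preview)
Your diagnosis of the obstacle is exactly right: the naive expansion around $\bEstar$ with a generic $O(\delta^2)$ remainder cannot be absorbed into $c\lambda\delta\sum_j|E_j\symdif E_j^\ddagger|$ when the latter is much smaller than $\delta^2$. Your proposed fix --- expand around $\bE^\ddagger$ and then invoke Riesz--Sobolev for $\scriptt(\bE^\ddagger)\le\scriptt(\bEstar)$ --- is correct and is essentially what the paper does.

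The paper's organization is marginally cleaner than your ``clean way'' and lets you skip the perturbed-kernel step. Rather than introducing $K_k^\ddagger=\one_{E_i^\ddagger}*\one_{E_j^\ddagger}$ and arguing it inherits the gradient bound from $K_k$, the paper writes $\one_{E_k}=\one_{B_k}+f_k^\ddagger+\tilde f_k$ for each $k$ and expands $\scriptt(\bE)$ trilinearly into $27$ terms. The eight terms free of any $\tilde f$ recombine to $\scriptt(\bE^\ddagger)$; the three terms $\scriptt(\one_{B_i},\one_{B_j},\tilde f_k)=\langle K_k,\tilde f_k\rangle$ use the \emph{original} radial kernel $K_k$ and are bounded by $-c\lambda\delta\sum_k|E_k\symdif E_k^\ddagger|$ exactly as you argue (noting, as you should check, that at least half the mass of $E_k\symdif E_k^\ddagger$ lies in $\{||x|-r_k|>\lambda\delta\}$ by construction, and that $\tilde f_k$ inherits the sign property $\tilde f_k>0\Leftrightarrow |x|>r_k$ since $E_k\symdif E_k^\ddagger\subset E_k\symdif B_k$). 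Every one of the remaining sixteen terms involves at least one $\tilde f$ factor and at least one factor from $\{f_i^\ddagger,\tilde f_i\}$, so by \eqref{eq:trivialbound} each is $O(\delta\cdot\max_k|E_k\symdif E_k^\ddagger|)$ with implied constant \emph{independent of $\lambda$}. Now $\lambda$ can be chosen large enough to absorb this error, and only then is $\delta_0$ fixed. This avoids any discussion of $K_k^\ddagger$.
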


In the following proof, $c$ denotes a small strictly positive constant, whose value is
permitted to change from one occurrence to the next.
\begin{proof}
Set $\delta = \max_{i=1}^3 |E_i\symdif E_i^\star|\le\delta_0$.
Write $\one_{E_k} = \one_{E_k^\star}+f_k^\ddagger + \tilde f_k$ for each index,
and expand $\scriptt(\bE)$ accordingly to obtain $27$ terms. 
Eight of those terms do not involve the functions $\tilde f_j$;
these eight recombine to give $\scriptt(\bE^\ddagger)$.
Three terms are of the form $\langle K_k,\tilde f_k\rangle$;
their sum is less than or equal to 
$-c\lambda \delta \sum_k |E_k\symdif E_k^\ddagger|$, as discussed above.
The remaining terms involve either two or more $\tilde f_j$,
or one $\tilde f_j$ and at least one $f_k^\ddagger$. 
By the elementary inequality
\begin{equation} \label{eq:trivialbound}
\scriptt(E_1,E_2,E_3) \le \min_{i\ne j\in\{1,2,3\}} |E_i|\cdot|E_j|,
\end{equation}
each of these terms is
\[O(\max_j|E_j\symdif E_j^\star|\cdot \max_k |E_k^\ddagger\symdif E_k|)
= O(\delta \max_k |E_k^\ddagger\symdif E_k|),\]
uniformly in $\lambda$. 
Thus
\begin{equation}\label{eq:tobeabsorbed}
\scriptt(\bE)
\le \scriptt(\bE^\ddagger)
-c\lambda \delta \sum_j |E_j\symdif E_j^\ddagger|
+ O(\delta \sum_j |E_j\symdif E_j^\ddagger|)
\end{equation}
where the constant factor implicit in the $O$ notation is independent of $\lambda$.

Choose $\lambda$ to be a sufficiently large constant to ensure that
the remainder term in \eqref{eq:tobeabsorbed} can be absorbed, yielding 
\begin{equation} \label{eq:predaggered}
\scriptt(\bE) \le \scriptt(\bE^\ddagger) -c \lambda \sum_{i=1}^3 |E_i\symdif E_i^\star| 
\,\cdot\, \sum_{j=1}^3 |E_j\symdif E_j^\ddagger|
\end{equation}
with a smaller but still positive value of $c$.
Since $|E_j^\ddagger| = |E_j|$, $(E_j^\ddagger)^\star = E_j^\star$.  
Therefore by the Riesz-Sobolev inequality, $\scriptt(\bE^\ddagger)\le \scriptt(\bEstar)$.
Inserting this into \eqref{eq:predaggered} yields \eqref{eq:daggered}.
\end{proof}

If $\max_{1\le j\le 3} |E_j\symdif E_j^\ddagger|\ge \tfrac1{10} \max_{1\le j\le 3} |E_j\symdif E_j^\star|$
we conclude immediately from Lemma~\ref{lemma:removeddagger} that 
\[\scriptt(\bE) \le \scriptt(\bEstar) -c \sum_{j=1}^3 |E_j\symdif E_j^\star|^2
\le \scriptt(\bEstar) -c \dist(\bE,\scripto(\bEstar))^2.\]

There remains the main case, in which
$\max_j |E_j\symdif E_j^\ddagger|\le \tfrac1{10} \max_j |E_j\symdif E_j^\star|$.
In this case the nonpositive term
$-c \lambda \sum_{i=1}^3 |E_i\symdif E_i^\star| \,\cdot\, \sum_{j=1}^3 |E_j\symdif E_j^\ddagger|$
in \eqref{eq:daggered} may not be useful. However, \eqref{eq:predaggered} still gives
\begin{equation} \scriptt(\bE) \le \scriptt(\bE^\ddagger) \end{equation} 
and it suffices to prove that $\scriptt(\bE^\ddagger)
\le \scriptt(\bEstar) - c \dist(\bE,\scripto(\bE^\ddagger))^2$.
Indeed,
\[ \dist(\bE^\ddagger,\scripto(\bEstar)) \ge \dist(\bE,\scripto(\bEstar))-\max_j |E_j\symdif E_j^\ddagger|
\ge \tfrac12 \dist(\bE,\scripto(\bEstar)).\]

\section{Reduction to the boundar(ies)} \label{section:bdyreduction}
We have shown that $\bE$ may be replaced by $\bE^\ddagger$.
Thus our assumptions henceforth are that $\bE$ is $\rho$--strictly admissible,
that $\max_j|E_j|=1$, 
that $\dist(\bE,\scripto(\bEstar))\le\delta_0$,
that $\delta = \max_{i=1}^3 |E_i\symdif E_i^\star|$ satisfies 
$\delta \le 4 \dist(\bE,\scripto(\bEstar))$, 
and that 
\begin{equation} \label{eq:nearboundary} 
E_j\symdif B_j\subset\{x: \big|\,|x|-r_j\,\big|\le\lambda\delta.\}\end{equation}
We aim to prove that for any $\lambda,\rho\in (0,\infty)$
there exists $\delta_0>0$ such that
$\scriptt(\bE)$ satisfies the desired inequality
whenever all of these assumptions are satisfied.

We continue to write $B_j = E_j^\star$ and to denote by $r_j$ the radius of $B_j$.
The kernels $K_j$ and positive constants $\gamma_j$ are as defined above.
It is elementary that $K_k$ is twice continuously differentiable in a neighborhood of
the boundary of $B_k$.
We write $\be = (e_j: 1\le j\le 3) = (|E_j|: 1\le j\le 3)$.

Continue to represent $\one_{E_j} = \one_{B_j} + f_j$. Each $f_j$ is supported in
$\{x: \big|\,|x|-r_j\,\big|\le\lambda\delta\}$,
and satisfies $\int f_j=0$ and $\norm{f_j}_{L^\infty}\le 1$.
Refine this representation by defining functions
$f_j^\pm$, taking values in $\{0,1\}$, by $\one_{E_j\setminus B_j} =  f_j^+$
and  $\one_{B_j\setminus E_j} =  f_j^-$,
so that $f_j = f_j^+-f_j^-$.
Introduce polar coordinates $(r,\theta)$ in $\reals^d$
and define functions $F_j^\pm\in \lt(S^{d-1})$ by
\[ F_j^\pm(\theta) = \int_{\reals^+} f_j^\pm(t\theta)\,t^{d-1}\,dt\]
where $\theta$ is regarded as a unit vector, so that $t\theta$ is
the point with polar coordinates $(t,\theta)$.
Define $F_j\in L^2(S^{d-1})$ by $F_j=F_j^+-F_j^-$.

Under the hypothesis that $E_j\symdif B_j$ is contained in a small neighborhood
of the boundary of $B_j$,
\begin{equation}
|E_j\symdif B_j|^2\asymp 
\norm{F_j^+}_{L^2(S^{d-1})}^2
+ \norm{F_j^-}_{L^2(S^{d-1})}^2,
\end{equation}
where $u\asymp v$ means that $u\le Cv$ and $v\le Cu$ with a positive, finite constant $C$
that depends on $d,\rho$ but not otherwise on $\bE$.
Thus it suffices to establish an upper bound of the form
\[\scriptt(\bE)\le\scriptt(\bEstar) - c\sum_{j=1}^3 \big( \norm{F_j^+}_{L^2(S^{d-1})}^2
+ \norm{F_j^-}_{L^2(S^{d-1})}^2\big).\]


Denote by $\sigma$ the rotation-invariant measure on $S^{d-1}$, normalized so that
Lebesgue measure in $\reals^d$ is equal to $r^{d-1}\,dr\,d\sigma(\theta)$ in polar coordinates.
For each $k\in\{1,2,3\}$ define the quadratic form $\scriptq_k$ on $L^2(S^{d-1})$ by  
\begin{equation} \scriptq_k(F,G) = \iint_{S^{d-1}\times S^{d-1}} F(x)G(y)
\one_{|r_i x+r_j y|\le r_k}\,d\sigma(x)\,d\sigma(y).
\end{equation}
Define also
\begin{equation} \scriptq(F_1,F_2,F_3) 
= \scriptq_1(F_2,F_3) + \scriptq_2(F_3,F_1) + \scriptq_3(F_1,F_2).
\end{equation}

Whenever $|x|=|y|=1$, $|r_i x+r_j y|^2 = r_i^2+r_j^2+2r_ir_jx\cdot y$.
Therefore $\scriptq_k$ is symmetric, and
the compact linear operator $T:L^2(S^{d-1})\to L^2(S^{d-1})$
defined by
$TF(x) = \int_{S^{d-1}} F(y)\one_{|r_i x+r_j y|\le r_k}\,d\sigma(y)$
is selfadjoint, and commutes with rotations. 

The goal of this section is the following second order expansion, in 
whose formulation $L^2$ denotes $L^2(S^{d-1},\sigma)$.
\begin{proposition}\label{prop:RSperturbative}
Under the hypotheses introduced at the beginning of \S\ref{section:bdyreduction},
\begin{equation} \label{eq:SHexpansion}
\scriptt(\bE) \le \scriptt(\bEstar) 
-\tfrac12 \sum_{k=1}^3 \gamma_k r_k^{-(d-1)} \big(\norm{F_k^+}_{L^2}^2 +\norm{F_k^-}_{L^2}^2\big)
+ \scriptq(F_1,F_2,F_3) + O(\delta^3).  \end{equation} \end{proposition}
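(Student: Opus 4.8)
The plan is to expand $\scriptt(\bE) = \scriptt(\one_{B_1}+f_1,\one_{B_2}+f_2,\one_{B_3}+f_3)$ multilinearly into $8$ terms, sorted by the number of factors $f_j$ they contain. The term with no $f_j$ is $\scriptt(\bEstar)$. The three terms linear in a single $f_k$ are $\langle K_k, f_k\rangle$, and these carry the main negative contribution. The three terms quadratic in two of the $f_j$'s are, after a change to polar coordinates, approximately the forms $\scriptq_k$; I must show the approximation costs only $O(\delta^3)$. The single cubic term $\scriptt(f_1,f_2,f_3)$ is $O(\delta^3)$ by the trivial bound \eqref{eq:trivialbound}, since each $f_j$ has essential support of measure $O(\delta)$. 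So the content is in the linear and the quadratic terms.

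For the linear terms, I would use the computation already recorded before Proposition~\ref{prop:RSperturbative}: since $\int f_k=0$ and $f_k = f_k^+ - f_k^-$ with $f_k^+$ supported in $E_k\setminus B_k$ (outside the sphere $|x|=r_k$) and $f_k^-$ inside,
\[
\langle K_k, f_k\rangle = \int (K_k(|x|)-K_k(r_k))\,f_k(x)\,dx.
\]
Since $K_k$ is $C^2$ near $|x|=r_k$ and the support of $f_k$ lies in the shell $\big|\,|x|-r_k\,\big|\le\lambda\delta$, write $K_k(s)-K_k(r_k) = -\gamma_k(s-r_k)\,\mathrm{sgn}(s-r_k)\cdot(\text{sign conventions}) + O(|s-r_k|^2)$; more precisely $K_k(s)-K_k(r_k) = K_k'(r_k)(s-r_k) + O((s-r_k)^2)$ with $K_k'(r_k) = -\gamma_k$. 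Passing to polar coordinates and using $r^{d-1}\,dr\,d\sigma$, the linear-in-$(s-r_k)$ part integrates against $f_k^\pm$ to produce $-\tfrac12\gamma_k r_k^{-(d-1)}(\norm{F_k^+}_{L^2}^2 + \norm{F_k^-}_{L^2}^2)$ up to $O(\delta^3)$: the point is that, on the shell, $\int_{\reals^+} |s-r_k|\,f_k^\pm(s\theta)\,s^{d-1}\,ds$ equals $\tfrac12 r_k^{-(d-1)}(F_k^\pm(\theta))^2 + O(\delta^3)$ pointwise in $\theta$, because $f_k^\pm(\cdot\,\theta)$ is an indicator of a subset of an interval of length $O(\delta)$, for which $\int |s-r_k|\,\one \,ds$ is, to leading order, $\tfrac12(\text{length})^2 = \tfrac12 (F_k^\pm(\theta) r_k^{-(d-1)})^2 r_k^{d-1}$ (here one also absorbs the discrepancy between $s^{d-1}$ and $r_k^{d-1}$, which is $O(\delta)$ relative, into the error). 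The quadratic-in-$(s-r_k)$ remainder contributes $O(\delta)\cdot\int |f_k| = O(\delta^2)$ — wait, that is only $O(\delta^2)$, not $O(\delta^3)$; this is the point requiring care, and is handled by noting that the quadratic term in the Taylor expansion of $K_k$ gives $O((s-r_k)^2) = O(\lambda^2\delta^2)$ on the shell, times $\int|f_k| = O(\delta)$, i.e.\ $O(\delta^3)$ once one keeps track that the shell width is itself $O(\delta)$ so $(s-r_k)^2 = O(\delta^2)$ uniformly. This is the bookkeeping one must do honestly.

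For the quadratic terms, consider $\scriptt(f_i, f_j, \one_{B_k})$ with $\{i,j,k\}=\{1,2,3\}$. This equals
\[
\int_{\Sigma} f_i(x_i)\,f_j(x_j)\,\one_{B_k}(x_k)\,d\lambda = \iint f_i(u)\,f_j(v)\,\one_{B_k}(-u-v)\,du\,dv = \iint f_i(u)\,f_j(v)\,\one_{|u+v|\le r_k}\,du\,dv.
\]
Now $u$ ranges over the shell near $|u|=r_i$ and $v$ over the shell near $|v|=r_j$; write $u = s\theta$, $v = s'\theta'$ with $s = r_i + O(\delta)$, $s' = r_j + O(\delta)$. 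Since $\nabla K_k\ne 0$ on $|x|=r_k$, the boundary $\{|u+v|=r_k\}$ is transverse and the indicator $\one_{|s\theta + s'\theta'|\le r_k}$ differs from $\one_{|r_i\theta + r_j\theta'|\le r_k}$ only on a set of $(\theta,\theta')$ of the relevant measure $O(\delta)$ — more carefully, the error in replacing $s, s'$ by $r_i, r_j$ in the indicator, integrated against $|f_i||f_j|$, is $O(\delta)\cdot O(\delta)\cdot O(\delta) = O(\delta^3)$ because fixing $\theta,\theta'$ the $s,s'$ integrals are over intervals of length $O(\delta)$ and the indicator changes on an $s,s'$-set that, after integrating over the sphere variables, has measure controlled by $\norm{F_i^\pm}\,\norm{F_j^\pm} = O(\delta)$ each. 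Hence $\scriptt(f_i,f_j,\one_{B_k}) = \scriptq_k(F_i, F_j) + O(\delta^3)$, and summing over the three choices of $k$ gives exactly $\scriptq(F_1,F_2,F_3) + O(\delta^3)$. Combining the linear, quadratic, and cubic contributions, and noting that the inequality (rather than equality) in \eqref{eq:SHexpansion} absorbs the one-sided error from the trivial-bound estimates (so that the $O(\delta^3)$ need only be an upper bound), yields Proposition~\ref{prop:RSperturbative}.

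The main obstacle I expect is the error accounting in the linear terms: naively the second-order Taylor remainder of $K_k$ only gives $O(\delta^2)$, and one must exploit that the support of $f_k$ is confined to a shell of width $O(\delta)$ — not merely that it has measure $O(\delta)$ — so that every factor $|s - r_k|$ appearing is genuinely $O(\delta)$, upgrading the remainder to $O(\delta^3)$. A secondary subtlety is the replacement of the Jacobian weight $s^{d-1}$ by $r_k^{d-1}$ inside the shell integrals and the identification $\int_{\reals^+}|s-r_k|\,f_k^\pm(s\theta)\,s^{d-1}\,ds = \tfrac12 r_k^{-(d-1)}F_k^\pm(\theta)^2 + O(\delta^3)$, which is where the precise constant $\tfrac12\gamma_k r_k^{-(d-1)}$ and the squared $L^2$ norms of $F_k^\pm$ emerge; this uses that $f_k^\pm(\cdot\,\theta)$ is $\{0,1\}$-valued so its "profile" on the ray is an indicator of a set whose first moment about $r_k$ is minimized, and comparable to, the square of its length.
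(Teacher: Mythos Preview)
Your approach matches the paper's exactly: expand trilinearly into eight terms and analyze them by degree in the $f_j$. There is, however, a genuine gap in your treatment of the cubic term $\scriptt(f_1,f_2,f_3)$. The trivial bound \eqref{eq:trivialbound} yields only $|\scriptt(f_1,f_2,f_3)|\le \min_{i\ne j}|\operatorname{supp} f_i|\cdot|\operatorname{supp} f_j|=O(\delta^2)$, not $O(\delta^3)$; it uses only two of the three supports. Since the main negative term and $\scriptq(F_1,F_2,F_3)$ are themselves of size $\delta^2$, an $O(\delta^2)$ remainder would be fatal. The paper's argument instead exploits the \emph{shell geometry} together with $\rho$--strict admissibility: for $x_1$ fixed in the shell $\big|\,|x_1|-r_1\,\big|\le C\delta$, the set of $x_2$ satisfying both $\big|\,|x_2|-r_2\,\big|\le C\delta$ and $\big|\,|x_1+x_2|-r_3\,\big|\le C\delta$ has measure $O(\delta^2)$, because the sphere $|x_1+x_2|=r_3$ meets the sphere $|x_2|=r_2$ transversally. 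Integrating then over $x_1$ in its shell gives $O(\delta^3)$. This transversality is not visible from the measures of the supports alone; your invocation of \eqref{eq:trivialbound} must be replaced by this geometric argument.

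A secondary imprecision occurs in the linear term. You assert that $\int_{\reals^+} |s-r_k|\,f_k^\pm(s\theta)\,s^{d-1}\,ds$ \emph{equals} $\tfrac12 r_k^{-(d-1)}(F_k^\pm(\theta))^2 + O(\delta^3)$. This is false as an equality: if the set $\{s: f_k^+(s\theta)=1\}$ sits near the outer edge of the shell rather than abutting $s=r_k$, the integral is of order $\delta\, F_k^+(\theta)$, not $(F_k^+(\theta))^2$. What is true, and what the paper proves, is the one-sided bound: among all $\{0,1\}$-valued profiles with prescribed $F_k^+(\theta)$, the first moment $\int (s-r_k)f_k^+(s\theta)s^{d-1}\,ds$ is \emph{minimized} by the interval $[r_k,r_k+h(\theta)]$, and that minimum equals $\tfrac12 r_k^{-(d-1)}(F_k^+(\theta))^2+O(\delta^3)$. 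This supplies exactly the inequality direction needed in \eqref{eq:SHexpansion}. Your closing remarks suggest you see this, but the body of the argument should state and use the minimization, not claim an equality that does not hold.
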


To begin the proof, substitute $\one_{B_j}+f_j$ for $\one_{E_j}$ for each index,
and expand $\scriptt(\bE)$ as a sum of the resulting eight terms. 
The main term is $\scriptt(B_1,B_2,B_3) = \scriptt(\bEstar)$.
There are three other types of terms, which are analyzed in the next three lemmas.

\begin{lemma} $|\scriptt(f_1,f_2,f_3)| = O(\delta^3)$.  \end{lemma}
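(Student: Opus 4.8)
The plan is to bound $\scriptt(f_1,f_2,f_3)$ using only crude size estimates on the $f_j$, together with the location of their supports. Recall that each $f_j$ is supported in the thin annular shell $\{x: \big|\,|x|-r_j\,\big|\le\lambda\delta\}$, takes values in $\{-1,0,1\}$, and satisfies $\|f_j\|_{L^1} = |E_j\symdif B_j| \le 2\delta$. So a first, immediate observation is that, by the trivial bound \eqref{eq:trivialbound} applied with $f_j$ in place of $\one_{E_j}$ (which is legitimate since $\scriptt$ is multilinear and each $|f_j|\le\one_{A_j}$ for the annular shell $A_j$, whence $|\scriptt(f_1,f_2,f_3)|\le\scriptt(A_1,A_2,A_3)\le\min_{i\ne j}|A_i|\,|A_j|$), one gets $|\scriptt(f_1,f_2,f_3)| = O(\delta^2)$. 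This is not yet good enough; we need to extract one more power of $\delta$.

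The key step is to exploit the geometry: $\scriptt(f_1,f_2,f_3) = \int_\Sigma \prod_j f_j(x_j)\,d\lambda(\bx)$ is supported on the set of $\bx=(x_1,x_2,x_3)$ with $x_1+x_2+x_3=0$ and each $x_j$ in the shell around radius $r_j$. I would bound this by $\|f_1\|_{L^\infty}\|f_2\|_{L^\infty}\|f_3\|_{L^\infty}$ times the $\lambda$-measure of that region, i.e. by
\[
|\scriptt(f_1,f_2,f_3)| \le \big|\{(x_1,x_2)\in(\reals^d)^2:\ \big|\,|x_j|-r_j\,\big|\le\lambda\delta\ \text{for } j=1,2,\ \big|\,|x_1+x_2|-r_3\,\big|\le\lambda\delta\}\big|.
\]
Now the first constraint confines $x_1$ to a shell of volume $O(\delta)$, and within that, for fixed $x_1$, the set of $x_2$ satisfying \emph{both} $\big|\,|x_2|-r_2\,\big|\le\lambda\delta$ and $\big|\,|x_1+x_2|-r_3\,\big|\le\lambda\delta$ is an intersection of two shells of thickness $O(\delta)$ centered at two different points (the origin and $-x_1$). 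Since $\bE$ is $\rho$--strictly admissible, the two spheres $\{|x_2|=r_2\}$ and $\{|x_1+x_2|=r_3\}$ meet transversally — their normals are never parallel because $r_3$ lies strictly between $|r_2-r_1|$ and $r_2+r_1$, hence $x_1$ and $x_2$ are never collinear at the intersection — with a lower bound on the transversality angle depending only on $d$ and $\rho$. Therefore the intersection of the two $O(\delta)$-shells has volume $O(\delta^2)$, uniformly. Multiplying by the $O(\delta)$ from the $x_1$ integration yields the bound $O(\delta^3)$.

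I expect the main obstacle to be making the transversality estimate clean and uniform. Concretely, one must verify that for $\rho$--strictly admissible radii there is $\kappa=\kappa(d,\rho)>0$ such that whenever $|x_1|$ is within $O(\delta)$ of $r_1$, the angle between the gradients of $x_2\mapsto|x_2|$ and $x_2\mapsto|x_1+x_2|$, at any point $x_2$ satisfying both near-sphere conditions, is at least $\kappa$; equivalently the vectors $x_2/|x_2|$ and $(x_1+x_2)/|x_1+x_2|$ are bounded away from $\pm$ each other. This is exactly the statement that $\nabla K_k\ne 0$ near $|x|=r_k$, already invoked in the text, so it can be quoted rather than reproven; then a change of variables straightening one sphere, or a direct Fubini in the $(d-1)$ tangential directions, converts it into the volume bound. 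The remaining bookkeeping — checking the $O$-constant depends only on $d,\rho,\lambda$ and not on the particular sets, and that $\delta\le\delta_0$ keeps all shells inside the region where $K_k$ is smooth and $\nabla K_k\ne 0$ — is routine.
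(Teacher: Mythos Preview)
Your proof is correct and follows essentially the same approach as the paper: bound $|\scriptt(f_1,f_2,f_3)|$ by the Lebesgue measure of the constraint region, fix $x_1$ in its $O(\delta)$--shell, and observe that the set of admissible $x_2$ is the intersection of two thin spherical shells that meet transversally by $\rho$--strict admissibility, hence has measure $O(\delta^2)$. One small caveat: the transversality of the two spheres is not literally ``the statement that $\nabla K_k\ne 0$'' --- they are related consequences of strict admissibility rather than the same fact --- but your direct argument via non-collinearity of $x_2$ and $x_1+x_2$ is correct on its own and makes the appeal to $\nabla K_k$ unnecessary.
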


\begin{proof}
If $x_1\in\reals^d$ satisfies $\big|\,|x_1|-r_1\,\big|\le C\delta$
then the $\sigma$--measure
of the set\footnote{For $d=1$ this set is empty.}
of all  $x_2\in\reals^d$ satisfying both $\big|\,|x_2|-r_2\,\big|\le C\delta$
and $\big|\,|x_1+x_2|-r_3\,\big|\le C\delta$
is $O_\rho(\delta^2)$ under the hypothesis of $\rho$--strict admissibility,
provided that $C\delta$ is sufficiently small relative to $\rho$.
\end{proof}

\begin{lemma} For each index $j\in\{1,2,3\}$,
\begin{equation}\label{eq:Fjterm} \langle K_j,f_j\rangle
\le -\tfrac12 \gamma_j r_j^{1-d} (\norm{F_+}_{L^2}^2 + \norm{F_-}_{L^2}^2) + O(\delta^3).\end{equation}
\end{lemma}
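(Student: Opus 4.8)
The plan is to prove \eqref{eq:Fjterm} by reducing the inner product $\langle K_j, f_j\rangle$ to a one-dimensional problem along each ray emanating from the origin, using the decay of $K_j$ transverse to $\partial B_j$. First I would use $\int f_j = 0$ together with the fact that $f_j$ is supported in the thin annulus $\{x : \big|\,|x|-r_j\,\big|\le\lambda\delta\}$ to write $\langle K_j,f_j\rangle = \int (K_j(x)-K_j(r_j))f_j(x)\,dx$, exactly as in the computation already displayed in the excerpt. Then I would Taylor-expand $K_j$ in the radial variable around $|x| = r_j$: since $K_j$ is $C^2$ near $\partial B_j$ and radial, one has $K_j(s) - K_j(r_j) = -\gamma_j (s - r_j) + O((s-r_j)^2)$ for $|s - r_j|\le\lambda\delta$, with $\gamma_j = |\nabla K_j|$ on $|x| = r_j$ and the sign chosen because $K_j$ is nonincreasing along rays. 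Substituting and passing to polar coordinates $x = t\theta$ with measure $t^{d-1}\,dt\,d\sigma(\theta)$, the main linear term becomes
\[
-\gamma_j \int_{S^{d-1}} \int_{\reals^+} (t - r_j)\, f_j(t\theta)\, t^{d-1}\,dt\,d\sigma(\theta),
\]
and the quadratic remainder contributes $O(\delta^2 \cdot |E_j\symdif B_j|) = O(\delta^3)$ since $|t - r_j|^2 = O(\delta^2)$ on the support and the support has measure $O(\delta)$.

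Next, the key step: for fixed $\theta$, split $f_j(t\theta) = f_j^+(t\theta) - f_j^-(t\theta)$, where $f_j^+$ is supported where $t > r_j$ (points of $E_j\setminus B_j$, which near the boundary lie outside the ball) and $f_j^-$ where $t < r_j$. Actually the cleaner route is to observe that along the ray in direction $\theta$, the slice $E_j\cap\{t\theta\}$ differs from $[0,r_j]$ (in the radial variable, weighted) by adding a set of radial measure contributing $F_j^+(\theta)$ just outside $r_j$ and removing a set contributing $F_j^-(\theta)$ just inside $r_j$. Since these sets sit within distance $\lambda\delta$ of $r_j$, one computes
\[
\int_{\reals^+} (t-r_j) f_j^+(t\theta)\, t^{d-1}\,dt = r_j^{d-1}\!\!\int (t - r_j) \one_{f_j^+(t\theta)=1}\,dt + O(\delta \cdot F_j^+(\theta))
\]
and the leading integral is $\ge \tfrac12 r_j^{d-1}\big(\int \one_{f_j^+(t\theta)=1}\,dt\big)^2 \cdot (1 + O(\delta))$ — this is the elementary one-dimensional fact that among subsets of $(r_j, r_j+\lambda\delta)$ of prescribed one-dimensional measure $m$, the integral $\int(t-r_j)\,\one\,dt$ is minimized by the interval adjacent to $r_j$, giving $\tfrac12 m^2$; and $m = r_j^{-(d-1)}F_j^+(\theta) + O(\delta F_j^+(\theta))$. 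An identical computation for $f_j^-$ gives a contribution of the same sign (since $t - r_j < 0$ there and we subtract $f_j^-$). Summing the two and integrating over $\theta$ produces $-\tfrac12 \gamma_j r_j^{-(d-1)}\big(\norm{F_j^+}_{L^2(S^{d-1})}^2 + \norm{F_j^-}_{L^2(S^{d-1})}^2\big)$, and all the $O(\delta\cdot F_j^\pm)$ errors, after Cauchy–Schwarz against the $O(\delta)$-measure support, are $O(\delta^3)$.

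I expect the main obstacle to be bookkeeping the powers of $t$ versus $r_j$ cleanly: the weight $t^{d-1}$ in polar coordinates is not exactly $r_j^{d-1}$, but differs by $O(\delta)$ on the support, and one must be careful that replacing $t^{d-1}$ by $r_j^{d-1}$ only introduces errors that are genuinely third order rather than contaminating the main second-order term — in particular the quantity $F_j^\pm(\theta)$ is defined with the weight $t^{d-1}$ built in, whereas the sharp one-dimensional inequality $\int(t-r_j)\one\,dt \ge \tfrac12 m^2$ is naturally stated for the \emph{unweighted} radial measure $m$, so the translation between $m$ and $F_j^\pm(\theta)$ must be done with an explicit $1 + O(\delta)$ factor and the resulting cross terms tracked. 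The other point requiring a little care is justifying that the one-dimensional extremal configuration (interval adjacent to $r_j$) really does give the claimed lower bound $\tfrac12 m^2$ rather than just $\Omega(m^2)$ with a worse constant; this follows from the rearrangement observation that pushing mass of $f_j^+$ as close as possible to $r_j$ only decreases $\int (t-r_j)\one\,dt$, combined with the direct evaluation $\int_0^m s\,ds = \tfrac12 m^2$ for the extremal interval $(r_j, r_j+m)$. Everything else is Taylor expansion and Cauchy–Schwarz.
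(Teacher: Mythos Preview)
Your approach is essentially identical to the paper's: Taylor-expand $K_j$ radially to first order, reduce to the one-dimensional extremal problem on each ray, and use that the interval adjacent to $r_j$ minimizes $\int(t-r_j)\one\,dt$ among sets of prescribed radial mass, yielding $\tfrac12 m^2$ and hence $\tfrac12 r_j^{1-d}F_j^\pm(\theta)^2$ after converting $m$ to $F_j^\pm$. One bookkeeping correction: your displayed error from replacing $t^{d-1}$ by $r_j^{d-1}$ is actually $O(\delta^2 F_j^+(\theta))$, not $O(\delta F_j^+(\theta))$, since both $|t-r_j|$ and $|t^{d-1}-r_j^{d-1}|$ contribute a factor of $\delta$ --- with only $O(\delta F_j^+)$ the integral over $\theta$ would be $O(\delta^2)$ rather than $O(\delta^3)$; the paper avoids this by keeping the weight $t^{d-1}$ in the extremal problem and solving $(r_j+h)^d - r_j^d = dF_j^+(\theta)$ for $h$ directly.
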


\begin{proof}
It is elementary that for each index $k\in\{1,2,3\}$,
$K_k$ is twice continuously differentiable in a neighborhood of the support of $f_k$. Therefore 
since $f_k(t\theta)=0$ unless $|t-r_j|\le C\delta$,
\begin{align*}
\langle K_k,\,f_k\rangle 
&= \int_{S^{d-1}}\int_0^\infty K_k(t\theta) f_k(t\theta) t^{d-1}\,dt\,d\sigma(\theta)
\\& 
= \int_{S^{d-1}}\int_0^\infty 
\big(
K_k(r_k) -\gamma_k (t-r_k)+O(\delta^2)
\big)\,
f_k(t\theta) t^{d-1}\,dt\,d\sigma(\theta).
\end{align*}
The expression 
$K_k(r_k) - \gamma_k (t-r_k)+O(\delta^2)$ leads to three terms.
The first of these is $K_k(r_k)\int_{\reals^d} f_k=0$.
The third is $O(\delta^2)\norm{f_k}_{L^1} = O(\delta^3)$.
The second is 
$ -\gamma_k \int_{S^{d-1}}\int_0^\infty (t-r_k)
(f_k^+-f_k^-)(t\theta) t^{d-1}\,dt\,d\sigma(\theta)$.
The integrand is nonnegative, since  
$f_k^+-f_k^-$ has the same sign as $t-r_k$.

Let us momentarily imagine
that  $F_k^+$ is given, and that $f_k^+$ varies among those functions supported in $\{t\theta: t\ge r_k\}$,
taking values in $\{0,1\}$, satisfying 
$\int_0^\infty f_k^+(t\theta) t^{d-1}\,dt = F_k^+(\theta)$.
Among all such functions $f_k$, $\int_{\reals^+} f_k^+(t\theta) t^{d-1}\,dt$
is minimized if $t\mapsto f_k(t\theta)$ is the indicator function of an interval $[r_k,r_k+h(\theta)]$
where $h(\theta)\ge 0$ is defined by the relation $\int_{r_k}^{r_k+h(\theta)} t^{d-1}\,dt = F_k^+(\theta)$.
That is, $(r_k+h(\theta))^d-r_k^d = dF_k^+(\theta)$.
Therefore, since $h(\theta)=O(\delta)$, 
\begin{equation}\label{eq:hformula} h(\theta) = r_k^{-(d-1)} F_k^+(\theta) + O(\delta F_k^+(\theta)).\end{equation}
This gives 
\begin{equation} \int_{\reals^+} (t-r_k) f_k^+(t\theta) t^{d-1}\,dt
\ge \int_{r_k}^{r_k+h(\theta)} (t-r_k)  t^{d-1}\,dt
= \tfrac12 r_k^{d-1}h(\theta)^2 - O(h(\theta)^3) \end{equation}
where the constants implicit in the $O(\cdot)$ notation depend only on $d,r_k,\lambda,\delta_0$.
The remainder term $O(h(\theta)^3)$ is $O(\delta^3)$ according to \eqref{eq:hformula},
since $\norm{F_k^+}_{L^\infty} = O(\delta)$.
Moreover, $\tfrac12 r_k^{d-1}h(\theta)^2 = \tfrac12 r_k^{-(d-1)}F_k^+(\theta)^2$,
also by \eqref{eq:hformula}.

The same analysis may be applied to $f_k^-,F_k^-$, yielding \eqref{eq:Fjterm}.
\end{proof}

\begin{lemma}
\[ \scriptt(f_i,f_j,\one_{B_k}) = \scriptq_k(F_i,F_j) + O(\delta^3).\]
\end{lemma}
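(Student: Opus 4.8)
\ The plan is to write $\scriptt(f_i,f_j,\one_{B_k})$ as an explicit integral over $\reals^d\times\reals^d$, pass to polar coordinates, and use that $f_i,f_j$ are supported in thin annuli of width $O(\delta)$ about the spheres of radii $r_i,r_j$. Since the three slots enter $\scriptt$ symmetrically and $B_k$ is a ball centred at the origin, one has
\[\scriptt(f_i,f_j,\one_{B_k})=\int_{\reals^d}\int_{\reals^d} f_i(x)\,f_j(y)\,\one_{\{|x+y|\le r_k\}}\,dx\,dy.\]
Passing to polar coordinates $x=s\theta$, $y=t\phi$ (so $dx=s^{d-1}\,ds\,d\sigma(\theta)$, $dy=t^{d-1}\,dt\,d\sigma(\phi)$), the integrand vanishes unless $|s-r_i|\le\lambda\delta$ and $|t-r_j|\le\lambda\delta$, and on that range $\bigl|\,|s\theta+t\phi|-|r_i\theta+r_j\phi|\,\bigr|\le|s-r_i|+|t-r_j|\le 2\lambda\delta$.

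The key step is to replace the constraint $\one_{\{|s\theta+t\phi|\le r_k\}}$ by $\one_{\{|r_i\theta+r_j\phi|\le r_k\}}$, which is independent of $s$ and $t$. The two indicators disagree only where $\bigl|\,|r_i\theta+r_j\phi|-r_k\,\bigr|\le 2\lambda\delta$, so the error is at most
\[\int\!\!\int_{\{(\theta,\phi)\,:\,||r_i\theta+r_j\phi|-r_k|\le 2\lambda\delta\}}(F_i^++F_i^-)(\theta)\,(F_j^++F_j^-)(\phi)\,d\sigma(\theta)\,d\sigma(\phi),\]
since $\int_0^\infty|f_i(s\theta)|\,s^{d-1}\,ds=(F_i^++F_i^-)(\theta)$, and likewise for $j$. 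Each factor $(F_i^++F_i^-)(\theta)$, $(F_j^++F_j^-)(\phi)$ is $O(\delta)$ uniformly, being the integral of a function bounded by $1$ over an interval of length $\le 2\lambda\delta$ against the bounded weight $s^{d-1}$. It remains to estimate the $\sigma\times\sigma$ measure of $\{(\theta,\phi):||r_i\theta+r_j\phi|-r_k|\le 2\lambda\delta\}$. Here the identity $|r_i\theta+r_j\phi|^2=r_i^2+r_j^2+2r_ir_j\,\theta\cdot\phi$ turns this condition into $|\theta\cdot\phi-c_0|\le C\delta$ with $c_0=(r_k^2-r_i^2-r_j^2)/(2r_ir_j)$, and $\rho$-strict admissibility guarantees $|r_i-r_j|<r_k<r_i+r_j$ with a margin depending only on $\rho$, so $c_0$ lies in a fixed compact subinterval of $(-1,1)$. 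Consequently, for each $\theta$ the set of admissible $\phi$ is a spherical band of width $O(\delta)$, of $\sigma$-measure $O(\delta)$ (it is empty when $d=1$), and integrating over $\theta$ gives total measure $O(\delta)$. The error is therefore $O(\delta)\cdot O(\delta)\cdot O(\delta)=O(\delta^3)$, with all constants depending only on $d$ and $\rho$.

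After the replacement the $s$- and $t$-integrals separate, and since $\int_0^\infty f_i(s\theta)\,s^{d-1}\,ds=F_i^+(\theta)-F_i^-(\theta)=F_i(\theta)$ and likewise for $f_j$,
\[\int_{S^{d-1}\times S^{d-1}}F_i(\theta)\,F_j(\phi)\,\one_{\{|r_i\theta+r_j\phi|\le r_k\}}\,d\sigma(\theta)\,d\sigma(\phi)=\scriptq_k(F_i,F_j),\]
which is precisely the asserted main term. There is no genuine obstacle in this argument; the one point that requires care is the transversality estimate on $S^{d-1}\times S^{d-1}$ in the previous paragraph---the same non-degeneracy that makes $\nabla K_k$ nonvanishing across $|x|=r_k$---together with the routine bookkeeping that confirms every implicit constant is uniform in $\bE$ once $d$ and $\rho$ are fixed.
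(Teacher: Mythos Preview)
Your argument is correct and follows essentially the same route as the paper's own proof: write the integral in polar coordinates, observe that on the support of $f_i\otimes f_j$ the indicator $\one_{|s\theta+t\phi|\le r_k}$ coincides with $\one_{|r_i\theta+r_j\phi|\le r_k}$ except on the $O(\delta)$--band $\bigl|\,|r_i\theta+r_j\phi|-r_k\,\bigr|\le C\delta$, and combine the $O(\delta)$ pointwise bounds on $F_i^\pm,F_j^\pm$ with the $O(\delta)$ measure of that band to get the $O(\delta^3)$ error. Your treatment is slightly more explicit about the band-measure estimate via the inner-product representation $|r_i\theta+r_j\phi|^2=r_i^2+r_j^2+2r_ir_j\,\theta\cdot\phi$, but there is no substantive difference.
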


That is,
\begin{multline} \label{eq:thatis}
\iint_{\reals^d\times\reals^d} f_i(x)f_j(y)\one_{B_k}(x+y)\,dx\,dy
\\ = \iint_{S^{d-1}\times S^{d-1}} F_i(x)F_j(y)\one_{|r_i x+ r_j y|\le r_k}\,d\sigma(x)\,d\sigma(y)
+  O(\delta^3).
\end{multline}

\begin{proof}
For any $\theta_i,\theta_j\in S^{d-1}$,
\[ \iint_{\reals^+\times\reals^+} f_i(t_i\theta_i)f_j(t_j\theta_j)t_i^{d-1}t_j^{d-1}
\one_{|t_i\theta_i+t_j\theta_j|\le r_k}(t_i,t_j)\,dt_i\,dt_j
= F_i(\theta_i)F_j(\theta_j)\one_{|r_i\theta_i+r_j\theta|\le r_k} \]
unless $\big|\,|r_i\theta_i+r_j\theta_j|-r_k\,\big|\le C\delta$.
The $\sigma\times\sigma$ measure of the set of all ordered pairs $(\theta_i,\theta_j)\in S^{d-1}\times S^{d-1}$
satisfying 
$\big|\,|r_i\theta_i+r_j\theta_j|-r_k\,\big|\le C\delta$
is $O(\delta)$ by the $\rho$--strict admissibility hypothesis.
Since $\int |f_i(t_i\theta_i)| t_i^{d-1}\,dt_i=O(\delta)$, $F_i = O(\delta)$,
and the same bounds hold with $i$ replaced by $j$,
the total contribution of all such exceptional pairs $(\theta_i,\theta_j)$ to
either side of \eqref{eq:thatis} is $O(\delta^3)$.
\end{proof}

Combining the last three lemmas establishes Proposition~\ref{prop:RSperturbative}.

\section{Spectral analysis}

A natural question, in light of what has been shown thus far, is what is the value of the optimal constant
$A$ in the inequality
\begin{equation} \label{eq:A}
\scriptq(F_1,F_2,F_3) \le A\sum_{k=1}^3 \gamma_k r_k^{1-d} \norm{F_k}_{\lt}^2 \ \text{ $\forall\, F_j\in L^2(S^{d-1})$
satisfying $\textstyle\int_{S^{d-1}} F_j\,d\sigma=0$.}\end{equation}
This is potentially relevant because of the inequality
\begin{equation}  
\norm{F_k}_{L^2}^2,
= \norm{F_k}_{L^2}^2 +2\langle F_k^+,F_k^-\rangle
 \le \norm{F_k^+}_{L^2}^2 +\norm{F_k^-}_{L^2}^2,
\end{equation}
which is valid since $F_k^\pm$ are nonnegative and $F_k = F_k^+-F_k^-$.

The optimal constant $A$ in \eqref{eq:A} cannot be strictly less than $\tfrac12$. For if \eqref{eq:A} were
to hold for some $A<\tfrac12$ then it would be a direct consequence of
the foregoing analysis that for any $\rho$--strictly admissible $\bE$ satisfying
$\max_j|E_j|=1$ with $\max_j|E_j\symdif E_j^\star|$ sufficiently small
and $E_j\symdif E_j^\star\subset\{x: \big|\,|x|-r_j\,\big|\le C\max_i|E_i\symdif E_i^\star|\}$ for all $j$,
$\scriptt(\bE)\le \scriptt(\bEstar) - c\max_j |E_j\symdif E_j^\star|^2$.
But this conclusion is false; by virtue of affine invariance of the functional $\scriptt$, 
it fails for every admissible ordered triple $\bE$ 
of homothetic ellipsoids with centers $\bv$ with $\sum_j v_j=0$. 
Thus the analysis must be refined, to exploit the full strength of the
assumption that $\max_j|E_j\symdif E_j^\star|$
has the same order of magnitude as $\dist(\bE,\scripto(\bEstar))$.

For this purpose, we recast $\scriptq$ in terms of spherical harmonics.
For each nonnegative integer $n$, denote by $\scripth_n\subset L^2(S^{d-1})$ the finite-dimensional
subspace of all spherical harmonics of degree $n$. Then $L^2(S^{d-1})= \oplus_{n=0}^\infty \scripth_n$.
Denote by $\pi_n$ the orthogonal projection from $L^2(S^{d-1})$ onto $\scripth_n$.
This decomposition diagonalizes each of the quadratic forms $\scriptq_k$,
in the sense that there exist compact selfadjoint operators $T_k$ on $L^2(S^{d-1})$
such that 
$\scriptq_k(F,G) \equiv \langle T_k(F),G\rangle$,
$T_k: \scripth_n\to\scripth_n$ for all $n$, and $T_k$ agrees with a scalar multiple 
$\lambda(n,r_1,r_2,r_3)$ on $\scripth_n$.

Because $|E_j|=|E_j^\star|$, $\int_{\reals^d} f_j=0$ for each index $j$
and consequently $\int_{S^{d-1}}F_j\,d\sigma=0$; that is, $\pi_0(F_j)=0$.
Therefore for each $d\ge 2$,
\begin{equation}\label{eq:sphericalharmonic} 
\scriptq(F_1,F_2,F_3)
= \sum_{n=1}^\infty 
\scriptq(\pi_n(F_1),\pi_n(F_2),\pi_n(F_3)).
\end{equation}

The compactness of the linear operators $T_k$ has the following consequence.
\begin{lemma} \label{lemma:compactnessconsequence}
For each $d\ge 2$
there exists a sequence $\Lambda_n=\Lambda_n(\br)$ satisfying $\lim_{n\to\infty} \Lambda_n=0$
such that for each $n\ge 0$ and all $(G_1,G_2,G_3)\in\scripth_n^3$,
\begin{equation} |\scriptq(G_1,G_2,G_3)| \le \Lambda_n \sum_{j=1}^3 \norm{G_j}_{L^2(S^{d-1})}^2.
\end{equation} \end{lemma}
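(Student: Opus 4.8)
The plan is to read the conclusion off directly from the spectral picture already assembled. Recall that $\scriptq_k(F,G)=\langle T_k F,G\rangle$ with $T_k$ a compact selfadjoint operator on $L^2(S^{d-1})$ commuting with rotations, so that $T_k$ maps each space $\scripth_n$ of degree-$n$ spherical harmonics into itself and acts there as multiplication by a scalar, say $\mu_k(n)$. First I would restrict to $(G_1,G_2,G_3)\in\scripth_n^3$ and use $\scriptq(F_1,F_2,F_3)=\scriptq_1(F_2,F_3)+\scriptq_2(F_3,F_1)+\scriptq_3(F_1,F_2)$ together with $T_k\scripth_n\subset\scripth_n$ to write
\[\scriptq(G_1,G_2,G_3)=\mu_1(n)\langle G_2,G_3\rangle+\mu_2(n)\langle G_3,G_1\rangle+\mu_3(n)\langle G_1,G_2\rangle.\]
Applying Cauchy--Schwarz and $|\langle G_i,G_j\rangle|\le\tfrac12(\norm{G_i}_{L^2}^2+\norm{G_j}_{L^2}^2)$ to each of the three terms, and noting that each $\norm{G_j}_{L^2}^2$ occurs in exactly two of the three bounds, gives
\[|\scriptq(G_1,G_2,G_3)|\le\Big(\max_{1\le k\le 3}|\mu_k(n)|\Big)\sum_{j=1}^3\norm{G_j}_{L^2(S^{d-1})}^2,\]
so the lemma holds with $\Lambda_n:=\max_{1\le k\le 3}|\mu_k(n)|$, provided $\Lambda_n\to 0$ as $n\to\infty$.

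It then remains to check $\mu_k(n)\to 0$ for each fixed $k$, and this is precisely where compactness of $T_k$ is used. Fix $\eps>0$. Since $T_k$ is compact and selfadjoint, the set of its eigenvalues of modulus exceeding $\eps$ is finite and each such eigenvalue has finite multiplicity, so the corresponding spectral subspace $V_\eps\subset L^2(S^{d-1})$ is finite-dimensional. Any $\scripth_n$ with $|\mu_k(n)|>\eps$ is an eigenspace of $T_k$ for an eigenvalue of modulus $>\eps$, hence is contained in $V_\eps$; since the spaces $\scripth_n$ are nonzero (as $d\ge2$) and mutually orthogonal and $\dim V_\eps<\infty$, only finitely many $n$ can satisfy $|\mu_k(n)|>\eps$. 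As $\eps>0$ was arbitrary, $\mu_k(n)\to 0$, and therefore $\Lambda_n\to 0$. Since each $\mu_k(n)$, being the scalar by which $T_k$ acts on $\scripth_n$, depends only on $\br=(r_1,r_2,r_3)$, so does $\Lambda_n=\Lambda_n(\br)$.

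I do not expect a genuine obstacle here: the one point deserving care is the transition from ``$T_k$ is compact'' to ``the scalars $\mu_k(n)$ decay along the orthogonal tower $\bigoplus_n\scripth_n$'', which is exactly the standard structure theorem for compact selfadjoint operators. The remaining estimates are Cauchy--Schwarz and the arithmetic--geometric mean inequality. One harmless bookkeeping point: the term $n=0$ needs no smallness, since $\scripth_0$ consists of constants and $\Lambda_0$ is merely some finite number; in any event $\pi_0(F_j)=0$, so that term never contributes in \eqref{eq:sphericalharmonic}.
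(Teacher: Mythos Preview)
Your argument is correct and is precisely the intended one: the paper does not spell out a proof but simply introduces the lemma as ``the compactness of the linear operators $T_k$ has the following consequence,'' and you have supplied exactly those details --- that $T_k$ acts as a scalar $\mu_k(n)$ on $\scripth_n$, that Cauchy--Schwarz together with $|\langle G_i,G_j\rangle|\le\tfrac12(\norm{G_i}^2+\norm{G_j}^2)$ gives $\Lambda_n=\max_k|\mu_k(n)|$, and that compactness forces $\mu_k(n)\to 0$ along the orthogonal decomposition $\bigoplus_n\scripth_n$.
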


Denote by $I$ the identity mapping $I:\reals^d\to\reals^d$.
The next two results will be proved in \S\ref{section:centralize}, below.
\begin{lemma} \label{lemma:centering}
Let $d\ge 2$.
Let $\bE$ be as above, and let $\delta = \dist(\bE,\scripto(\bEstar))$.  There exist $\bv,\psi$
satisfying $|\bv|=O(\delta)$ and $\norm{\psi-I}=O(\delta)$
such that the functions $\tilde F_j$ associated
to the sets $\tilde E_j = \psi(E_j)+v_j$ satisfy
\begin{equation} \left\{ \begin{aligned} 
&\pi_1(\tilde F_j)=0 \text{ for $j=1$ and $j=2$}
\\ &\pi_2(\tilde F_1)=0.  \end{aligned} \right. \end{equation}
Here $\bv\in(\reals^d)^3$ satisfies $v_1+v_2+v_3=0$,
and $\psi$ is a Lebesgue measure-preserving linear automorphism of $\reals^d$. 
\end{lemma}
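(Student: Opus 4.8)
The plan is to realize the three conditions $\pi_1(\tilde F_1)=\pi_1(\tilde F_2)=0$ and $\pi_2(\tilde F_1)=0$ as the vanishing of a smooth map, and to invoke the implicit function theorem. First I would set up the parameter space: write $\psi = \exp(M)$ where $M$ is a trace-free $d\times d$ matrix (so that $\det\psi = 1$ automatically and $\psi$ is measure preserving), and let $\bv = (v_1,v_2,v_3)$ range over the subspace $v_1+v_2+v_3=0$. For a parameter $P=(M,v_1,v_2)$ near the origin, set $\tilde E_j = \exp(M)(E_j)+v_j$, form the boundary-defect functions $\tilde f_j = \one_{\tilde E_j}-\one_{B_j}$, and then the spherical averages $\tilde F_j \in L^2(S^{d-1})$ as in \S\ref{section:bdyreduction}. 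Define
\[
\Phi(P) = \big(\pi_1(\tilde F_1),\ \pi_1(\tilde F_2),\ \pi_2(\tilde F_1)\big)\in \scripth_1\times\scripth_1\times\scripth_2,
\]
a finite-dimensional target of dimension $d+d+\binom{d+1}{2}-1$. I would check that this matches the dimension of the parameter space $P$: the translations contribute $2d$ (two free vectors in $\reals^d$), and the trace-free matrices $M$ contribute $d^2-1$; the relevant part of $M$ acting on the relevant harmonics will be exactly the symmetric trace-free part, of dimension $\binom{d+1}{2}-1$, matching $\pi_2(\tilde F_1)$, while the antisymmetric part (rotations) acts trivially on $B_1$ and can be discarded. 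So one restricts $M$ to symmetric trace-free matrices, giving parameter dimension $2d + \binom{d+1}{2}-1$, equal to $\dim(\text{target})$.

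The key computation is the linearization of $\Phi$ at $P=0$, where $\tilde E_j=E_j$ (which we have already arranged to be within $O(\delta)$ of $B_j$, so $\Phi(0)=O(\delta)$ componentwise in $L^2$). A first-variation calculation: translating $E_j$ by $v_j$ changes $\one_{E_j}$ to first order by $-v_j\cdot\nabla\one_{B_j}$ plus a term supported on $E_j\symdif B_j$ that is $O(\delta|v_j|)$; the leading term, after taking the spherical average $F_j$, contributes a nonzero multiple of the degree-one harmonic $\theta\mapsto v_j\cdot\theta$ to $\pi_1(\tilde F_j)$. Likewise, applying $\exp(sM)$ with $M$ symmetric trace-free perturbs $\one_{B_1}$ to first order by $-(Mx)\cdot\nabla\one_{B_1}$, whose spherical average is, up to a nonzero constant, the degree-two harmonic $\theta\mapsto \theta^{\top}M\theta$, landing isomorphically in $\scripth_2$ (the trace-free condition kills the degree-zero part, and by symmetry there is no degree-one part). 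Thus $d\Phi(0)$ is block triangular: $v_1\mapsto \pi_1(\tilde F_1)$, $v_2\mapsto\pi_1(\tilde F_2)$ are invertible, and $M\mapsto\pi_2(\tilde F_1)$ is invertible after accounting for the already-determined pieces. Hence $d\Phi(0)$ is a linear isomorphism with norm and inverse-norm bounded by constants depending only on $d,\rho$ (the constants are controlled because the $r_j$ are pinned by $\rho$-strict admissibility and $\max_j|E_j|=1$).

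By the quantitative implicit/inverse function theorem, since $\Phi(0)=O(\delta)$ and $d\Phi$ is Lipschitz with uniformly bounded inverse on a fixed neighborhood of $0$, there is a solution $P^\ast$ with $|P^\ast|=O(\delta)$ and $\Phi(P^\ast)=0$; taking $\psi=\exp(M^\ast)$ and $\bv$ determined by $v_1^\ast,v_2^\ast$ gives $|\bv|=O(\delta)$, $\norm{\psi-I}=O(\delta)$, and the asserted vanishing. The main obstacle I anticipate is \emph{regularity of $\Phi$}: the map $P\mapsto\one_{\exp(M)(E_j)+v_j}$ is only Lipschitz, not $C^1$, into $L^1$, for a general measurable set $E_j$, so the naive implicit function theorem does not apply directly to $\Phi$ as an $L^2(S^{d-1})$-valued map. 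This is circumvented by exploiting that we only need the \emph{finite-dimensional} projections $\pi_1,\pi_2$: pairing $\tilde F_j$ against a fixed smooth spherical harmonic $Y$ turns the relevant component of $\Phi$ into $\int_{\tilde E_j} \tilde Y - \int_{B_j}\tilde Y$ for a smooth compactly supported extension $\tilde Y$ of $Y$, and $P\mapsto \int_{\exp(M)(E_j)+v_j}\tilde Y = \int_{E_j}\tilde Y(\exp(M)x+v_j)\,dx$ is a genuinely smooth (indeed real-analytic) function of $P$ by differentiation under the integral sign, with derivatives controlled by $\norm{\tilde Y}_{C^2}$ and $|E_j|$. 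So one applies the implicit function theorem to the smooth scalar system obtained by testing against a basis of $\scripth_1\oplus\scripth_1\oplus\scripth_2$, and the estimates on $\Phi(0)$ in terms of $\delta$ come from $|E_j\symdif B_j|\le 2\delta$ together with boundedness of the test functions.
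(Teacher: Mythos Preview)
Your proposal is correct and shares the paper's core mechanism: solve the balancing conditions via the implicit function theorem, and sidestep the regularity problem by pairing $\tilde F_j$ against smooth extensions of spherical harmonics so that each component of $\Phi$ becomes $P\mapsto\int_{E_j}\tilde Y(\exp(M)x+v_j)\,dx$, a genuinely smooth function of $P$. The paper organizes the argument differently. It first proves a standalone result (Lemma~\ref{lemma:balanced}) that balances a \emph{single} set up to degree $2$ using the full affine group $\aff(d)$; there the degree-$0$ condition is included in the system and is precisely what forces the resulting $\phi$ to be measure-preserving. The linearization is computed at the ball via the divergence theorem, yielding the surjective map $\aA(S)(\alpha)=\Pi(\alpha\cdot S(\alpha))$, and the $E$-dependence enters only through an $O(\delta)$ right-hand side. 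This lemma is applied (after dilation) to $E_1$ to obtain $\psi,v_1$; then, with $\psi$ frozen, a separate translation-only lemma is applied to the new $E_2$ to kill $\pi_1(\tilde F_2)$, leaving $\tilde F_1$ untouched since $v_1=0$ in the second step.

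Your one-shot approach---restricting to symmetric trace-free $M$ and solving simultaneously for $(M,v_1,v_2)$---is a bit more direct but requires the observation (which you make implicitly) that $d\Phi(0)$ differs from the block-diagonal map computed at the balls $B_j$ by $O(\delta)$, since $|E_j\symdif B_j|=O(\delta)$; this is what guarantees invertibility. The paper's two-step decoupling avoids that bookkeeping at the cost of an extra lemma. Both routes yield the same $O(\delta)$ bounds on $\psi-I$ and $\bv$.
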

``Associated'', throughout the discussion, means that
$\tilde F_j(\theta) = \int_0^\infty (\one_{E_j}-\one_{B_j})(t\theta)\,t^{d-1}\,dt$.
The norm $\norm{\psi-I}$ is defined by choosing any fixed norm on the vector space
of all $d\times d$ real matrices, expressing the elements $\psi,I$
of the general linear group as such matrices, and taking the norm of the
difference of the two associated matrices.

There are no spherical harmonics of degrees $>1$ for $d=1$, and the group of measure-preserving
linear automorphisms is trivial, so the situation is simpler. 
\begin{lemma} \label{lemma:d=1balancing}
Let $d=1$. 
Let $\bE$ be as above, and let $\delta = \dist(\bE,\scripto(\bEstar))$.  There exists $\bv\in\reals^3$
satisfying $|\bv|=O(\delta)$ and $\sum_{j=1}^3 v_j=0$
such that the functions $\tilde F_j$ associated
to the sets $\tilde E_j = E_j+v_j$ vanish identically on $S^0$.
\end{lemma}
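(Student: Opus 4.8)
The plan is to write the required translation $\bv$ down explicitly, using that for $d=1$ each $F_j$ is a pure first-degree spherical harmonic on $S^0=\{+1,-1\}$ and that translating $E_j$ shifts the one free coefficient of $F_j$ linearly.

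First I would record the structure of the $F_j$. Since $|E_j|=|B_j|$ we have $\int f_j=0$, hence $\pi_0(F_j)=0$, i.e.\ $F_j(+1)+F_j(-1)=0$. Because $L^2(S^0)=\scripth_0\oplus\scripth_1$ with $\scripth_1$ one-dimensional, $F_j\in\scripth_1$ is determined by the single real number $a_j:=F_j(+1)=-F_j(-1)$. Concretely $a_j=|E_j\cap\reals^+|-r_j$, and $|a_j|\le\|f_j\|_{L^1}=|E_j\symdif B_j|\le\delta$.

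Next I would compute the effect of a translation. By the hypotheses inherited from \S\ref{section:bdyreduction}, $f_j$ is supported in $\{x:\,\big|\,|x|-r_j\,\big|\le\lambda\delta\}$, a set bounded away from the origin once $\max_j|E_j|=1$ (so $r_j\ge\tfrac12\rho$). Hence for $|v|\le\lambda\delta$ the set $\tilde E_j=E_j+v$ still differs from $B_j=[-r_j,r_j]$ only near $\pm r_j$, and writing $\tilde f_j=\one_{\tilde E_j}-\one_{B_j}=\big(\one_{B_j}(\cdot-v)-\one_{B_j}\big)+f_j(\cdot-v)$ one finds that the first summand contributes $+v$ to $\int_0^\infty\tilde f_j$ and $-v$ to $\int_{-\infty}^0\tilde f_j$, while the translate of $f_j$, being supported away from $0$, contributes $a_j$ to $\int_0^\infty\tilde f_j$ unchanged. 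Therefore the associated function satisfies $\tilde F_j(+1)=a_j+v$, with $\tilde F_j(-1)=-\tilde F_j(+1)$ automatic since $|\tilde E_j|=|B_j|$. In particular $\tilde F_j\equiv 0$ on $S^0$ precisely when $v=-a_j$.

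This dictates the choice $v_j:=-a_j$ for $j=1,2,3$, which makes $\tilde F_j\equiv 0$ for all three indices and has $|\bv|\le\sqrt3\,\max_j|a_j|=O(\delta)$. The one remaining requirement is the constraint $v_1+v_2+v_3=0$, i.e.\ $a_1+a_2+a_3=0$; equivalently $\sum_j|E_j\cap\reals^+|=\tfrac12\sum_j|E_j|$. Checking this compatibility of the three centering conditions with the constraint $\sum_jv_j=0$ is the step I expect to require the most care, and I would extract it from the normalization already placed on $\bE$ by the earlier reductions — having replaced $\bE$ by a translate realizing $\dist(\bE,\scripto(\bEstar))$ up to a constant factor, together with, if needed, a single further translation of $\reals$ common to all three sets, which alters $\scriptt(\bE)$ and $\dist(\bE,\scripto(\bEstar))$ only at order $\delta^2$ and so does not affect the final inequality. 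With $\sum_j a_j=0$ in hand, $\bv=(-a_1,-a_2,-a_3)$ meets all the conditions and the lemma follows; the argument is visibly simpler than the proof of Lemma~\ref{lemma:centering}, since for $d=1$ there are no higher harmonics and no nontrivial measure-preserving linear automorphisms to take into account.
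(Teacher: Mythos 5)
Your first two paragraphs are correct and are essentially the explicit $d=1$ incarnation of Lemma~\ref{lemma:degree1balanced}: each $F_j$ is determined by the single number $a_j=F_j(+1)=|E_j\cap\reals^+|-r_j$, $|a_j|=O(\delta)$, and a translation by $v$ replaces $a_j$ by $a_j+v$, so $v_j=-a_j$ is forced. The problem is the step you yourself flag. The compatibility condition $\sum_j a_j=0$ is \emph{not} a consequence of any of the earlier normalizations, and it cannot be arranged: the admissible translations form a two-parameter family ($\sum_j v_j=0$) while you are imposing three conditions. A concrete obstruction: $E_1=B_1+u$, $E_2=B_2$, $E_3=B_3$, for which the distance-minimizing admissible translate still has $\sum_j a_j\ne 0$. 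Your proposed repair -- a common translation of all three sets by some $w=O(\delta)$ -- is not a symmetry of $\scriptt$: one has $\scriptt(E_1+w,E_2+w,E_3+w)=\scriptt(E_1,E_2,E_3+3w)$, which differs from $\scriptt(\bE)$ by $O(\delta^2)$ with uncontrolled sign and constant (e.g.\ the term $\int f_3(-y)\bigl[K_3(y-3w)-K_3(y)\bigr]\,dy$ is genuinely of size $\delta\cdot w$). Since the inequality being proved is itself a statement at order $\delta^2$ with a specific small constant $c$, an error $C\delta^2$ with $C$ unrelated to $c$ cannot be absorbed, and there is no monotonicity ($\scriptt(\bE)\le\scriptt(\bE+(w,w,w))$ fails in general) that would let you transfer the conclusion back to the original triple. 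So this step does not close.

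The resolution is that the full strength you are aiming for is neither achievable nor needed. Read in parallel with Lemma~\ref{lemma:centering} (which for $d\ge 2$ only demands $\pi_1(\tilde F_j)=0$ for $j=1,2$) and with the paper's own dimension count, the conclusion to prove is that $\tilde F_j\equiv 0$ for \emph{two} of the three indices. The paper obtains this by applying the single-set balancing lemma (Lemma~\ref{lemma:degree1balanced}) twice: first choose $v_1=-a_1$, $v_2=0$, $v_3=-v_1$ to kill $\tilde F_1$; then, for the renamed sets, choose $v_2=-a_2$, $v_1=0$, $v_3=-v_2$ to kill $\tilde F_2$ without disturbing $\tilde F_1$. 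The third function $\tilde F_3$ survives, but that is harmless: $\scriptq(F_1,F_2,F_3)=\scriptq_1(F_2,F_3)+\scriptq_2(F_3,F_1)+\scriptq_3(F_1,F_2)$ is a sum of terms each bilinear in two distinct $F_j$'s, so it vanishes as soon as any two of the three vanish, and the negative-definite term in Proposition~\ref{prop:RSperturbative} then finishes the $d=1$ case. If you replace your third paragraph by this two-step construction (your own translation formula supplies everything needed), the proof is complete.
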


The symmetry group of the functional $\scriptt$ is not sufficiently large 
to enable further reductions of these types, as a simple dimension count demonstrates.

For $d=1$, $F_j=\pi_1(F_j)$ for each index $j$, because there are no spherical harmonics
of higher degrees.
Therefore Lemma~\ref{lemma:d=1balancing}
suffices to complete the proof of Proposition~\ref{prop:1},
hence the proof of Theorem~\ref{thm:RSsharpenedscriptt}, for $d=1$.

For $d\ge 2$,
Lemma~\ref{lemma:centering} eliminates crucial terms from \eqref{eq:sphericalharmonic}.
In particular, spherical harmonics of degree $2$ are eliminated;
if two of the three functions $F_m$ vanish then $\scriptq_k(F_i,F_j)=0$ for all $(i,j,k)$.
Some analysis will be required to show that elimination of these terms suffices
to make the optimal constant $A$ strictly less than $\tfrac12$.

The significance of the conclusions 
$|\bv|=O(\delta)$ and $\norm{\phi-I}=O(\delta)$
is that these ensure that the sets $\tilde E_j = \phi(E_j)+v_j$
continue to satisfy
\[ \tilde E_j\symdif E_j^\star \subset\{x: \big|\,|x|-r_j\,\big|\le C\delta\}\]
for a certain finite constant $C$. Therefore the above analysis applies equally well to these
sets, and we can simply replace $\bE$ by $\tilde\bE$ henceforth.


For $d\ge 2$, replace $E_j$ by $\tilde E_j$ for all three indices.
In order to prove Proposition~\ref{prop:1}
and hence Theorem~\ref{thm:RSsharpenedscriptt} for arbitrary dimensions, 
it now suffices to prove the following two results.

\begin{lemma}\label{lemma:nge3}
Let $d\ge 2$.
Let $(r_1,r_2,r_3)$ be strictly admissible.
For each $n\ge 3$ there exists
$A<\tfrac12$ such that 
for all ordered triples $(G_1,G_2,G_3)$ of spherical harmonics $G_j:S^{d-1}\to\reals$ of degree $n$,
\begin{equation}
\scriptq(G_1,G_2,G_3) \le A \sum_{k=1}^3 \gamma_k r_k^{1-d} \norm{G_k}_{\lt(S^{d-1})}^2.
\end{equation}
\end{lemma}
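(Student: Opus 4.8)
The plan is to estimate the operator $T_k$ on the subspace $\scripth_n$ of spherical harmonics of degree $n$ and show that its eigenvalue $\lambda(n,\br)$ decays in $n$ fast enough, relative to the ``diagonal'' coefficients $\gamma_k r_k^{1-d}$, that the optimal constant in the trilinear inequality drops below $\tfrac12$ once $n\ge 3$. First I would recall that $T_kF(x) = \int_{S^{d-1}} F(y)\one_{|r_ix+r_jy|\le r_k}\,d\sigma(y)$, so on $\scripth_n$ it acts as multiplication by $\lambda(n,\br) = \sigma(S^{d-1})^{-1}\int_{S^{d-1}} C_n(x\cdot y)/C_n(1)\cdot\one_{|r_ix+r_jy|\le r_k}\,d\sigma(y)$ evaluated at any fixed $x$, where $C_n$ is the appropriate Gegenbauer polynomial (the zonal harmonic). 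Writing $x\cdot y = s$, this reduces to a one-dimensional integral $\int_{-1}^{1} \tilde C_n(s)\,\one_{s\le s_k}\,(1-s^2)^{(d-3)/2}\,ds$ for an explicit threshold $s_k = s_k(\br)$ determined by $|r_ix+r_jy|\le r_k \iff r_i^2+r_j^2+2r_ir_js\le r_k^2$; here $|s_k|<1$ precisely by strict admissibility, which is exactly where that hypothesis enters. So $\lambda(n,\br)$ is essentially a partial integral of a normalized Gegenbauer polynomial against its weight.

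The key estimate is then that $\sup_k |\lambda(n,\br)|$ is small compared with $\min_k \gamma_k r_k^{1-d}$ for $n\ge 3$, together with a crude Cauchy--Schwarz bound on the trilinear form in terms of the operator norms on $\scripth_n$. Explicitly, since $\scriptq(G_1,G_2,G_3) = \langle T_1G_2,G_3\rangle + \langle T_2G_3,G_1\rangle + \langle T_3G_1,G_2\rangle$, the triangle and arithmetic--geometric mean inequalities give $|\scriptq(G_1,G_2,G_3)| \le \sum_k |\lambda(n,\br)|\cdot\tfrac12(\norm{G_i}_{\lt}^2+\norm{G_j}_{\lt}^2) \le \big(\max_k|\lambda(n,\br)|\big)\sum_j \norm{G_j}_{\lt}^2$, so it suffices to show $\max_k |\lambda(n,\br)| \le A\,\min_k \gamma_k r_k^{1-d}$ with $A<\tfrac12$ for every $n\ge 3$. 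Since there are only finitely many relevant facts about the $\gamma_k$ and $r_k$ (they are fixed positive constants once $\br$ is fixed), this is a statement purely about the size of the Gegenbauer partial integrals. I would bound $|\lambda(n,\br)|$ by integrating by parts once (using that the primitive of $C_n^{(d-2)/2}(s)(1-s^2)^{(d-3)/2}$ is, up to constants, $(1-s^2)^{(d-1)/2}C_{n-1}^{d/2}(s)$ — the standard derivative identity for Gegenbauer polynomials), which produces a factor that decays like $n^{-1}$ together with the sup-norm of a normalized Gegenbauer polynomial on $[-1,1]$; combined with the normalization $C_n(1)$ in the denominator this yields $|\lambda(n,\br)| = O(n^{-1})$ uniformly, hence $<\tfrac12\min_k\gamma_kr_k^{1-d}$ for all large $n$, and one checks the finitely many remaining cases $3\le n\le N_0$ by hand (for those, $|s_k|<1$ strictly ensures $\lambda(n,\br)$ is a genuine partial, not full, integral, so cannot equal the full integral which is $0$ for $n\ge1$; a compactness/continuity argument over the strictly admissible $\br$ confining to the given $\rho$-range then gives a uniform $A<\tfrac12$).

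The main obstacle, I expect, is making the bound on $|\lambda(n,\br)|$ genuinely \emph{uniform} over $n\ge 3$ and over the compact set of $\rho$-strictly admissible normalized radii, rather than just $O(n^{-1})$ with an $\br$-dependent constant, and in particular ruling out that $\max_k|\lambda(n,\br)|$ approaches $\tfrac12\min_k\gamma_kr_k^{1-d}$ as $n\to\infty$ or at the boundary of the parameter range. The $O(n^{-1})$ decay handles large $n$ comfortably; the delicate point is the small values $n=3,4,\dots$, where I cannot rely on asymptotics and must instead argue that the partial Gegenbauer integral is \emph{strictly} smaller in absolute value than what the full integral would need to be — this is where strict admissibility ($|s_k|<1$) is essential and where a continuity-plus-compactness argument, using that $\scriptq_k$ and $\gamma_k,r_k$ depend continuously on $\br$, replaces an explicit computation. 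An alternative to the integration-by-parts route, which I would keep in reserve, is to invoke Lemma~\ref{lemma:compactnessconsequence}: since $\Lambda_n\to 0$, there is $N_0$ with $\Lambda_n<\tfrac12\min_k\gamma_kr_k^{1-d}$ for $n\ge N_0$, reducing everything to the finitely many degrees $3\le n< N_0$, each of which is then a single finite-dimensional linear-algebra fact about whether a specific eigenvalue ratio is $<\tfrac12$.
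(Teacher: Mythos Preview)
Your approach diverges from the paper's and contains a genuine gap at the critical step.

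The paper does \emph{not} estimate the eigenvalues $\lambda_k(n,\br)$ directly. In fact, the Interlude section explicitly explains why that route is unpromising: computing $\lambda_k(n,\br)$ amounts to evaluating indefinite integrals of Gegenbauer polynomials against their weight, which is tantamount to having closed forms for the Gegenbauer polynomials themselves. Instead, the paper argues conceptually. Given $\bG\in\scripth_n^3$, it builds perturbed sets $E_j(s)$ whose boundary displacements are governed by $G_j$, and shows (Lemma~\ref{lemma:retrace}) that $\scriptt(\bE(s)) = \scriptt(\bEstar) - \tfrac12 s^2\sum_k\gamma_kr_k^{1-d}\norm{G_k}^2 + s^2\scriptq(\bG)+O(s^3)$. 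It then proves $\scriptt(\bE(s))\le\scriptt(\bEstar)-cs^2$ (Lemma~\ref{lemma:quadgain}) by slicing $\reals^d=\reals^{d-1}\times\reals$ and applying the one-dimensional Riesz--Sobolev inequality fiberwise: the sliced intervals have centers $c_j(x',s)=sP_j(x')+O(s^2)$ for polynomials $P_j$ of degree $\le n-1$ derived from the odd-in-$x_d$ part of $G_j$, and the deficit is controlled by $\norm{\sum_jP_j(x'_j)}^2$ on $\Sigma$. The algebraic content for $n\ge3$ is Lemma~\ref{lemma:algebra:atlast}: if $\sum_jP_j(x'_j)\equiv0$ on $\Sigma$ then each $P_j$ is affine, forcing $G_k(x',x_d)=p(x')+(\text{affine in }x')x_d$; since $\deg p=n\ge3$, a rotation in the $(x_1,x_d)$-plane destroys this form. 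Dividing \eqref{eq:retrace} by $s^2$ and letting $s\to0$ yields Lemma~\ref{lemma:nge3}.

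Your plan, by contrast, reduces to showing $\max_k|\lambda_k(n,\br)| < \tfrac12\min_k\gamma_kr_k^{1-d}$ for each $n\ge3$. The large-$n$ part via $\Lambda_n\to0$ (or your $O(n^{-1})$ decay) is fine. The gap is at small $n$: you offer no argument, only ``check by hand'' and a compactness/continuity statement. The compactness argument is circular --- it yields a uniform $A<\tfrac12$ over $\rho$-admissible $\br$ \emph{only once} the strict inequality is known at each fixed $\br$, and that is precisely what is in question. Your observation that the partial Gegenbauer integral differs from the full integral (which vanishes) shows only $\lambda_k(n,\br)\ne0$, not that $|\lambda_k(n,\br)|$ is small relative to $\gamma_kr_k^{1-d}$. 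Worse, your Cauchy--Schwarz step may be too crude: the optimal $A$ on $\scripth_n^3$ is the top generalized eigenvalue of the $3\times3$ matrix with off-diagonal entries $\tfrac12\lambda_k$ relative to $\operatorname{diag}(\gamma_kr_k^{1-d})$, and for $n=1,2$ this equals exactly $\tfrac12$ (forced by the affine symmetries). There is no reason the looser bound $\max_k|\lambda_k|/\min_k\gamma_kr_k^{1-d}$ should sit at or below $\tfrac12$ for $n=1,2$, hence no reason to expect it to drop below $\tfrac12$ at $n=3$. Without either an explicit evaluation of $\lambda_k(3,\br),\lambda_k(4,\br),\dots$ and a comparison to the $\gamma_k$, or a structural reason why the $3\times3$ generalized eigenvalue falls strictly below $\tfrac12$ once $n\ge3$, your argument does not close. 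The paper supplies exactly such a structural reason, and it is not spectral but geometric.
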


\begin{lemma}\label{lemma:n=2}
Let $d\ge 2$.
Let $(r_1,r_2,r_3)$ be strictly admissible.
There exists $A<\tfrac12$ such that 
for all spherical harmonics $G_2,G_3:S^{d-1}\to\reals$ of degree $2$,
\begin{equation}
\scriptq_1(G_2,G_3)
\le A \sum_{k=2}^3 \gamma_k r_k^{1-d} \norm{G_k}_{\lt(S^{d-1})}^2.
\end{equation}
\end{lemma}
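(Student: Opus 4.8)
The plan is to exploit the fact that only one of the three quadratic forms, namely $\scriptq_1$, survives (because $\pi_2(\tilde F_1)=0$ has already been arranged, so $G_1=0$ and $\scriptq_2(G_3,G_1)=\scriptq_3(G_1,G_2)=0$), and to show that the single form $\scriptq_1(G_2,G_3)$ is strictly dominated by the diagonal quadratic expression on the right-hand side. Write $\scriptq_1(G_2,G_3)=\langle T_1 G_2,G_3\rangle$, where $T_1$ is the compact self-adjoint rotation-commuting operator on $L^2(S^{d-1})$ introduced in \S\ref{section:spectral}; on $\scripth_2$ it acts as multiplication by the scalar $\lambda(2,r_1,r_2,r_3)$. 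First I would record the elementary bound $|\scriptq_1(G_2,G_3)|=|\lambda(2,\br)|\,|\langle G_2,G_3\rangle|\le |\lambda(2,\br)|\,\norm{G_2}_{L^2}\norm{G_3}_{L^2}\le \tfrac12|\lambda(2,\br)|(\norm{G_2}_{L^2}^2+\norm{G_3}_{L^2}^2)$. Thus it suffices to prove the \emph{strict} inequality
\begin{equation*}
|\lambda(2,r_1,r_2,r_3)| < \min\big(\gamma_2 r_2^{1-d},\,\gamma_3 r_3^{1-d}\big)
\end{equation*}
for every strictly admissible $(r_1,r_2,r_3)$; the claimed constant $A<\tfrac12$ then follows by compactness of the strictly admissible simplex after scaling.

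The heart of the matter is therefore to compute, or at least estimate from above, the eigenvalue $\lambda(2,\br)$ of $T_1$ on $\scripth_2$ and to compare it with $\gamma_k r_k^{1-d}$. I would evaluate $\langle T_1 Y,Y\rangle$ for a convenient normalized degree-$2$ spherical harmonic $Y$ (for instance $Y(\theta)=c_d(d\,\theta_1^2-1)$, a zonal harmonic), using the Funk--Hecke formula: since $\one_{|r_i x+r_j y|\le r_k}$ depends only on $x\cdot y$ (with $\{i,j\}=\{2,3\}$ so $|r_2 x+r_3 y|^2=r_2^2+r_3^2+2r_2 r_3\,x\cdot y$), the operator $T_1$ has eigenvalue on $\scripth_n$ equal to a one-dimensional integral $\int_{-1}^1 \one_{\{r_2^2+r_3^2+2r_2 r_3 s\le r_1^2\}}\,P_n(s)\,(1-s^2)^{(d-3)/2}\,ds$ (up to the standard normalizing constant), where $P_n$ is the Gegenbauer/Legendre polynomial. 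Strict admissibility $|r_2-r_3|<r_1<r_2+r_3$ is exactly the condition that the cutoff in $s$ is a proper, nontrivial subinterval $(-1,s_0)$ of $(-1,1)$ with $s_0=\frac{r_1^2-r_2^2-r_3^2}{2r_2 r_3}\in(-1,1)$. On the other side, $\gamma_k$ and $r_k^{1-d}$ are likewise explicit: $\gamma_k=|\nabla K_k(x)|$ at $|x|=r_k$ is computed from $K_k=\one_{B_i}*\one_{B_j}$, whose radial derivative at $r_k$ is (a dimensional constant times) the $(d-1)$-volume of the intersection of the two spheres $|x|=r_i$, $|y|=r_j$ constrained by $|x+y|=r_k$, and this same geometric quantity governs the $n=1$ eigenvalue. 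I would then reduce the desired inequality to a clean statement comparing $\int_{-1}^{s_0}P_2(s)(1-s^2)^{(d-3)/2}\,ds$ against $\int_{-1}^{s_0}(1-s^2)^{(d-3)/2}\,ds$ (which encodes the $n=0$ normalization / the size of the cap) and against the $P_1$ integral.

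The main obstacle is the comparison step: one must show $|\lambda(2,\br)|$ is \emph{strictly} smaller than the relevant $n=1$-type quantity $\gamma_k r_k^{1-d}$, uniformly away from the degenerate (non-strictly-admissible) boundary. I expect this to follow from the observation that $P_2(s)=\tfrac12(3s^2-1)$ (in $d=3$; the Gegenbauer analogue otherwise) changes sign on $(-1,1)$, so there is genuine cancellation in $\int_{-1}^{s_0}P_2$, whereas the $n=1$ comparison quantity involves $P_1(s)=s$ integrated against a positive weight supported where the geometry forces a definite sign — more precisely one shows $\langle T_1 Y,Y\rangle/\norm{Y}^2$ is a strict convex combination / strictly smaller in absolute value than $\langle T_1 Z,Z\rangle/\norm{Z}^2$ for $Z\in\scripth_1$ by a Sturm-type oscillation argument on the Gegenbauer polynomials (each successive $P_n$ has one more sign change, producing strictly more cancellation against the single-interval cutoff). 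An alternative, possibly cleaner route is to avoid explicit special functions entirely: note that $|\scriptq_1(G_2,G_3)|\le \norm{T_1|_{\scripth_2}}\,\tfrac12(\norm{G_2}^2+\norm{G_3}^2)$ and that $\norm{T_1|_{\scripth_2}}<\norm{T_1|_{\scripth_1}}\le\gamma_k r_k^{1-d}$ would suffice; the strict inequality between the degree-$2$ and degree-$1$ operator norms of the positive-type kernel $T_1$ is plausible by a direct positivity/strict-monotonicity argument, but making it rigorous and uniform in $\br$ is exactly where the work lies, and I would fall back on the Funk--Hecke computation above to pin it down.
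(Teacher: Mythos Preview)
Your plan has a genuine gap: the entire content of the lemma is pushed into the unproved inequality $|\lambda(2,\br)| < \min_k \gamma_k r_k^{1-d}$, which you propose to verify either by a Funk--Hecke computation or by a ``Sturm-type oscillation'' comparison between $\scripth_1$ and $\scripth_2$. Neither is carried out, and you yourself concede that ``making it rigorous and uniform in $\br$ is exactly where the work lies.'' The paper's Interlude section addresses precisely your first route and explains why it is not pursued: the eigenvalue $\lambda(n,\br)$ is an indefinite integral $\int P_n(t)(1-t^2)^{(d-3)/2}\,dt$ of a Gegenbauer polynomial over a spherical cap, and no usable closed form is available for the comparison you need. (Incidentally, your Cauchy--Schwarz reduction asks for more than is required: optimizing over $G_2=aY$, $G_3=bY$ shows the lemma is \emph{equivalent} to $|\lambda(2,\br)| < \sqrt{\gamma_2 r_2^{1-d}\,\gamma_3 r_3^{1-d}}$, the geometric mean rather than the minimum, so you have already given up ground before the hard step.)

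The paper instead argues conceptually, never computing an eigenvalue. For $\bG=(0,G_2,G_3)$ with $G_2,G_3\in\scripth_2$ one constructs the perturbed triple $\bE_{\bG}(s)$ of star-shaped sets and records (Lemma~\ref{lemma:retrace}) that
\[
\scriptt(\bE_{\bG}(s)) = \scriptt(\bEstar) + s^2\Big[\scriptq_1(G_2,G_3) - \tfrac12\sum_{k=2}^3\gamma_k r_k^{1-d}\norm{G_k}_{\lt}^2\Big] + O(|s|^3),
\]
so it suffices to prove $\scriptt(\bE_{\bG}(s))\le\scriptt(\bEstar)-cs^2$ directly (Lemma~\ref{lemma:quadgain2}). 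This is done by slicing $\reals^d=\reals^{d-1}\times\reals$: on each fiber one applies the elementary one-dimensional Riesz--Sobolev inequality for intervals, which gains $a\,|c_1+c_2+c_3|^2$ from the mismatch of interval centers $c_j(x'_j,s)=sP_j(x'_j)+O(s^2)$. Since $G_1=0$ forces $P_1\equiv 0$, the gain is $as^2|P_2(x'_2)+P_3(x'_3)|^2$ with $P_2,P_3$ of degree $\le 1$. The short algebraic Lemma~\ref{lemma:algebra:atlast2} then shows that if this sum vanishes on $\Sigma$ for \emph{every} rotation of $(G_2,G_3)$ then $G_2=G_3=0$; a compactness argument converts this into the uniform constant $A<\tfrac12$. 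This route trades the special-function estimate you were aiming for against the one-dimensional inequality plus rotational freedom.
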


These two results will be proved in \S\ref{section:finalsteps}.
Together with Lemma~\ref{lemma:compactnessconsequence}, Lemma~\ref{lemma:n=2} gives this corollary:

\begin{corollary}
For each $d\ge 2$ there exists $A<\tfrac12$ with the following property. Let $F_j\in \lt(S^{d-1})$.
Suppose that $\pi_0(F_j)=0$ for all $j\in\{1,2,3\}$,
that $\pi_1(F_j)=0$ for $j=1,2$,
and that $\pi_2(F_1)=0$.
Then
\begin{equation} \label{eq:Abetter}
\scriptq(F_1,F_2,F_3) \le A\sum_{k=1}^3 \gamma_k r_k^{1-d} \norm{F_k}_{\lt}^2.
\end{equation}
\end{corollary}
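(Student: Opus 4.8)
The plan is to combine the two structural inputs already established.  We want to bound $\scriptq(F_1,F_2,F_3)$ under the three vanishing hypotheses $\pi_0(F_j)=0$ for all $j$, $\pi_1(F_j)=0$ for $j=1,2$, and $\pi_2(F_1)=0$.  First I would use the spherical-harmonic decomposition \eqref{eq:sphericalharmonic}, which says $\scriptq(F_1,F_2,F_3)=\sum_{n\ge 1}\scriptq(\pi_n(F_1),\pi_n(F_2),\pi_n(F_3))$, so it suffices to control each degree-$n$ block $\scriptq(G_1,G_2,G_3)$ with $G_j=\pi_n(F_j)$ separately and then sum, keeping track of the $L^2$ norms.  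Because $\scriptq(G_1,G_2,G_3)=\scriptq_1(G_2,G_3)+\scriptq_2(G_3,G_1)+\scriptq_3(G_1,G_2)$, any term in which two of the three arguments vanish is zero.

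Next I would split the sum over $n$ into the ranges $n=0$, $n=1$, $n=2$, and $n\ge 3$, and handle each.  For $n=0$ there is nothing to do since $\pi_0(F_j)=0$.  For $n=1$, two of the three functions, $\pi_1(F_1)$ and $\pi_1(F_2)$, vanish by hypothesis, so $\scriptq(\pi_1(F_1),\pi_1(F_2),\pi_1(F_3))=\scriptq_1(\pi_1(F_2),\pi_1(F_3))+\scriptq_2(\pi_1(F_3),\pi_1(F_1))+\scriptq_3(\pi_1(F_1),\pi_1(F_2))$; the third summand vanishes (both arguments zero) and the first two each have at least one zero argument, hence the whole degree-$1$ block is zero.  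For $n=2$: again $\pi_2(F_1)=0$, so $\scriptq(\pi_2(F_1),\pi_2(F_2),\pi_2(F_3))=\scriptq_1(\pi_2(F_2),\pi_2(F_3))+\scriptq_2(\pi_2(F_3),\pi_2(F_1))+\scriptq_3(\pi_2(F_1),\pi_2(F_2))$; the last two summands vanish, leaving only $\scriptq_1(\pi_2(F_2),\pi_2(F_3))$, which is exactly the quantity controlled by Lemma~\ref{lemma:n=2}.  Thus the degree-$2$ block is $\le A_2\sum_{k=2}^3\gamma_k r_k^{1-d}\norm{\pi_2(F_k)}_{\lt}^2$ for some $A_2<\tfrac12$.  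For each fixed $n\ge 3$, Lemma~\ref{lemma:nge3} gives an $A_n<\tfrac12$ with $\scriptq(\pi_n(F_1),\pi_n(F_2),\pi_n(F_3))\le A_n\sum_{k=1}^3\gamma_k r_k^{1-d}\norm{\pi_n(F_k)}_{\lt}^2$.

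The one subtlety is uniformity in $n$: Lemma~\ref{lemma:nge3} produces a constant $A_n<\tfrac12$ for each individual $n\ge 3$, but a priori $\sup_{n\ge 3}A_n$ could equal $\tfrac12$.  Here Lemma~\ref{lemma:compactnessconsequence} rescues us: the eigenvalue sequence $\Lambda_n\to 0$, so for all $n$ beyond some threshold $N$ we have $|\scriptq(G_1,G_2,G_3)|\le\Lambda_n\sum_j\norm{G_j}_{\lt}^2\le \tfrac14\min_k(\gamma_k r_k^{1-d})\sum_j\norm{G_j}_{\lt}^2$, say, which is bounded by $\tfrac14\sum_k\gamma_k r_k^{1-d}\norm{G_k}_{\lt}^2$ up to adjusting constants — in any case by a fixed fraction strictly less than $\tfrac12$ of the right-hand side.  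For the finitely many remaining values $3\le n\le N$ we take the maximum of the finitely many constants $A_n<\tfrac12$ from Lemma~\ref{lemma:nge3}.  Setting $A=\max\bigl(A_2,\max_{3\le n\le N}A_n,\tfrac14\bigr)<\tfrac12$ and summing the per-block estimates, using $\sum_n\norm{\pi_n(F_k)}_{\lt}^2=\norm{F_k}_{\lt}^2$ (Parseval), yields $\scriptq(F_1,F_2,F_3)\le A\sum_{k=1}^3\gamma_k r_k^{1-d}\norm{F_k}_{\lt}^2$, which is \eqref{eq:Abetter}.  The main obstacle is precisely this uniformity argument — reconciling the $n$-by-$n$ bounds of Lemma~\ref{lemma:nge3} with the asymptotic decay from Lemma~\ref{lemma:compactnessconsequence} — but once one observes that $\Lambda_n\to 0$ does the job for large $n$ and only finitely many intermediate cases survive, it is routine.
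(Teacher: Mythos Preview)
Your argument is correct and is precisely the proof the paper has in mind: the paper does not spell out the details, merely stating that the corollary follows from Lemmas~\ref{lemma:compactnessconsequence}, \ref{lemma:nge3}, and \ref{lemma:n=2}, and you have filled in the spherical-harmonic block decomposition and the uniformity-in-$n$ argument (compactness handles large $n$, the finitely many remaining degrees are covered by the per-$n$ lemmas) exactly as intended.
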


Rather than calculating $\gamma_j$ and the eigenvalues of the operators associated
to the quadratic forms $Q_j$ on $\scripth_n$ (all of which are functions of
$\br = (r_1,r_2,r_3)$), we will carry out a more conceptual analysis of the difference 
$ \scriptq(G_1,G_2,G_3)
-\tfrac12 \sum_{k=1}^3 \gamma_k r_k^{1-d} \norm{G_k}_{\lt(S^{d-1})}^2$
for ordered triples $(G_1,G_2,G_3)$ of spherical harmonics of degree $n$.

\section{Interlude}
We digress to explain why we are not able to analyze $\scriptq_k$
by means of explicit formulas for spherical harmonics. 
$\scriptq_k(G_i,G_j)$ takes the form
$\langle S_\rho(G_i),\,G_j\rangle$ where the inner product
is that of $\lt(S^{d-1})$,
and $S_\rho$ is the linear operator on $\lt(S^{d-1})$ defined by
\[S_\rho G(x) = \int_{S^{d-1}} G(y)\one_{x\cdot y\le \rho}\,d\sigma(y)\]
where $\rho = (2r_ir_j)^{-1}(r_k^2-r_i^2-r_j^2)$.
All $\rho$ in a certain open interval arise
from admissible ordered triples $(r_1,r_2,r_3)$.

Acting on spherical harmonics of degree $k$, 
$S_\rho$ is a scalar multiple $\lambda_k(\rho)$ of the identity.
Let $P_k$ be the Gegenbauer polynomials. 
These can be defined by the generating function expansion
\[ (1+s^2-2st)^{-(d-2)/2} = \sum_{k=0}^\infty P_k(t)s^k.\]
Then $Z_k(x) = P_k(x_d)$ is a spherical harmonic of degree $k$;
these are the zonal harmonics, up to scalar factors which are of no consequence here.

The value of $S_\rho(Z_k)$ at the point $N=(0,0,\dots,0,1)$ 
is the integral of $Z_k$ over a spherical cap centered at $N$,
whose radius varies with $\rho$.
Thus a calculation of $\lambda_k(\rho)$ for all $\rho$ 
equivalent to a calculation of the ratio of $S_\rho(Z_k)(N)$ to $Z_k(N)$.
This amounts to a calculation of the indefinite integral $\int P_k(t)(1-t^2)^{(d-3)/2}\,dt$.
An explicit formula for the indefinite integral would give
an explicit formula for $P_k(t)$, after differentiation
and division by $(1-t^2)^{(d-3)/2}$.

\section{Balancing via affine automorphisms} \label{section:centralize}

Let $\bb$ be the closed ball of radius $1$, centered at the origin, in $\reals^d$.
Consider bounded Lebesgue measurable sets $E\subset\reals^d$ that satisfy $|E|=|\bb|$.
To $E$ is associated the function $F=F_E:S^{d-1}\to\reals$ defined by 
\begin{equation}\label{eq:FassocE} F_E(\theta) =\int_0^\infty (\one_E-\one_\bb)(t\theta)\,t^{d-1}\,dt.
\end{equation}

\begin{definition}
Let $D\in\naturals$.
A bounded Lebesgue measurable set $E\subset\reals^d$
satisfying $|E|=|\bb|$ is balanced up to degree $D$ if
the function $F_E:S^{d-1}\to\reals$ associated to $E$ by \eqref{eq:FassocE} satisfies
\begin{equation}\label{eq:balanceintegral} \int_{S^{d-1}}F_E(y)P(y)\,d\sigma(y)=0 \end{equation}
for every polynomial $P:\reals^d\to\reals$ of degree less than or equal to $D$.
\end{definition}
For $D=0$, \eqref{eq:balanceintegral} asserts
that $\int (\one_E-\one_\bb)=0$, which is simply a restatement of the hypothesis $|E|=|\bb|$.

Denote by $\aff(d)$ the group of all affine automorphisms of $\reals^d$.
Denote by $\scriptm_d$ the vector space of all $d\times d$ square matrices with real entries, 
and by $\scriptm_d\oplus\reals^d$ the set of all ordered pairs $(T,v)$ where
$T\in \scriptm_d$ and $v\in\reals^d$, with the natural vector space structure.
Identify elements of $\scriptm_d$ with linear endomorphisms of $\reals^d$ in the usual way;
$S=(T,v)$ acts by $S(x) = T(x)+v$.
Fix any norm $\norm{\cdot}_{\scriptm_d}$ on $\scriptm_d$.

Elements $\phi\in\aff(d)$ take the form $\phi(x) = T(x)+v$ where 
$T:\reals^d\to\reals^d$ is an invertible linear transformation, and $v\in\reals^d$.
$T$ can be identified with an element of $\scriptm_d$, and $(T,v)$ is thus identified
with a unique element of $\scriptm_d\oplus\reals^d$. 
Define $\norm{\phi}_{\aff(d)}= \norm{T}_{\scriptm_d} + \norm{v}_{\reals^d}$.
We abuse notation by writing $\det(\phi)$ for the determinant of the unique $T\in\scriptm_d$
thus associated to $\phi$, and likewise $\trace(\phi) = \trace(T)$.

\begin{lemma}\label{lemma:balanced}
Let $d\ge 1$. There exists $c>0$ 
such that for every $\lambda\ge 1$
there exists $C_\lambda<\infty$ with the following property.
For any Lebesgue measurable set $E\subset\reals^d$ satisfying $|E|=|\bb|$,
$\lambda\defb\le c$, and 
$E\symdif\bb\subset\{x: \big|\, |x|-1\,\big|\le \lambda\defb\}$,
there exists a measure-preserving affine transformation $\phi\in\aff(d)$ such that
\begin{gather*}
\text{$\phi(E)$ is balanced up to degree $2$,} 
\\ \norm{\phi-I}_{\aff(d)} \le C_\lambda \defb,
\\ \phi(E)\symdif\bb\subset \{x: \big|\,1-|x|\,\big| \le C_\lambda \defb\}.
\label{eq:nearness2} 
\end{gather*}
\end{lemma}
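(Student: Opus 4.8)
The plan is to set up a finite-dimensional implicit-function (or Brouwer-degree) argument: construct a smooth map from a neighborhood of the identity in the group of measure-preserving affine transformations to the finite-dimensional space of ``balance coefficients'' of degrees $1$ and $2$, show its differential at the identity is surjective onto the relevant coefficient space, and conclude that one can solve for $\phi$ with $\norm{\phi - I}_{\aff(d)} = O(\defb)$. Concretely, for a set $A$ with $|A| = |\bb|$ write $F_A$ for its associated function on $S^{d-1}$ as in \eqref{eq:FassocE}, and for each polynomial $P$ of degree $\le 2$ let $\ell_P(A) = \int_{S^{d-1}} F_A(y) P(y)\, d\sigma(y)$. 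Since $\int(\one_A - \one_\bb) = 0$ automatically, the degree-$0$ condition is free, so the unknowns are the linear functionals indexed by a basis of homogeneous polynomials of degrees $1$ and $2$ modulo $|x|^2 - 1$; there are $d + (\tfrac12 d(d+1) - 1)$ of them. The measure-preserving affine group has dimension $(d^2 - 1) + d$, and after quotienting by the stabilizer $O(d)$ of $\bb$ one gets exactly the matching count $d + (d^2 - 1) - \tfrac12 d(d-1) = d + \tfrac12 d(d+1) - 1$, which is the reassuring dimension check already flagged in the excerpt.

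First I would compute the derivative of $\phi \mapsto \ell_P(\phi(E))$ at $\phi = I$. Writing $\phi_s(x) = x + s(Tx + v) + O(s^2)$ for an infinitesimal generator $(T,v)$ with $\trace(T) = 0$ (the measure-preserving constraint to first order), the first variation of $\phi_s(E)$ is, up to $O(\defb)$ errors coming from the fact that $E \symdif \bb$ lies in a $\lambda\defb$-annulus, the same as the first variation of $\phi_s(\bb)$, because $F_{\phi_s(E)} - F_{\phi_s(\bb)} = F_E + O(s\defb)$. Hence the differential $D$ of the balance map, evaluated on $(T,v)$, agrees to leading order with the differential of the balance map at $\bb$, which is an explicit linear map: translating $\bb$ by $sv$ changes $F$ by $s\, v \cdot \theta + O(s^2)$, contributing to degree-$1$ coefficients, while applying a traceless symmetric deformation $T$ changes $F$ by $s\, \theta^{\mathsf T} T \theta + O(s^2)$, contributing to degree-$2$ coefficients; antisymmetric $T$ lies in the kernel (it generates $O(d)$). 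One checks that $\theta \mapsto v\cdot\theta$ spans $\scripth_1$ as $v$ ranges over $\reals^d$, and $\theta \mapsto \theta^{\mathsf T} T\theta$ spans $\scripth_2$ as $T$ ranges over traceless symmetric matrices, using that the constant part $\tfrac1d \trace(T)|\theta|^2$ has been removed; this is where the tracelessness precisely matches the removal of $|x|^2 - 1$. Therefore $D$ is an isomorphism onto the degree-$\le 2$ balance space.

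Next I would make this quantitative and finite. Parametrize $\phi$ near $I$ by $(T, v)$ with $T$ traceless symmetric (reducing to a genuine slice transverse to $O(d)$) and $\det(\phi) = 1$, restricted to $\norm{(T,v)}_{\scriptm_d \oplus \reals^d} \le \epsilon_0$; the map $\Phi: (T,v) \mapsto (\ell_P(\phi(E)))_P$ is smooth (indeed real-analytic in $(T,v)$ for fixed $E$), satisfies $|\Phi(0,0)| = O(\defb)$ since $|\ell_P(E)| \lesssim \norm{F_E}_{L^1} \lesssim \defb$, has $D\Phi(0,0)$ within $O(\defb)$ of the fixed isomorphism $D$ above, and has second derivatives bounded uniformly (the bound depends on $\lambda$ through the annulus width but not on $E$ otherwise). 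By the quantitative inverse function theorem, once $\lambda\defb \le c$ with $c$ small enough, there is a solution $(T_*, v_*)$ with $\norm{(T_*,v_*)} \le C_\lambda \defb$ and $\Phi(T_*, v_*) = 0$, i.e.\ $\phi(E)$ balanced up to degree $2$. Finally, since $\norm{\phi - I}_{\aff(d)} \le C_\lambda\defb$ and $E\symdif\bb \subset \{|\,|x|-1\,| \le \lambda\defb\}$, the image $\phi(E) \symdif \bb$ is contained in $\{|\,|x| - 1\,| \le C_\lambda\defb\}$ for a possibly larger $C_\lambda$, which is the last assertion.

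The main obstacle I anticipate is the quantitative control of the error terms $F_{\phi(E)} - F_{\phi(\bb)}$ and the second-order remainder uniformly over all admissible $E$: one must verify that these are genuinely $O(\defb)$ (respectively $O(1)$ for the Hessian) with constants depending only on $d$ and $\lambda$, using only that $E\symdif\bb$ sits in the thin annulus and $|E| = |\bb|$. This requires a careful change of variables for $\phi(E)$ in polar coordinates and tracking how the annulus deforms; it is routine but is the technical heart. A secondary point, essentially bookkeeping, is checking the surjectivity of the model differential $D$ cleanly — i.e.\ that $\{v\cdot\theta\} \oplus \{\theta^{\mathsf T} T\theta : T \text{ traceless symmetric}\}$ exhausts $\scripth_1 \oplus \scripth_2$ — which follows from standard representation-theoretic facts about harmonic polynomials but should be stated explicitly.
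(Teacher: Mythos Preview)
Your proposal is correct and follows essentially the same implicit function theorem strategy as the paper. The one tactical difference worth noting: rather than restricting to measure-preserving $\phi$ from the outset (traceless $T$) and targeting only the degree-$1$ and degree-$2$ coefficients, the paper works with \emph{all} of $\scriptm_d\oplus\reals^d$ and targets the full space $V_2$ including constants; the degree-$0$ balance condition $\int F_{\phi(E)}=0$ then forces $|\phi(E)|=|\bb|$, hence $\det(\phi)=1$, automatically, and the differential $\aA(S)(\alpha)=\Pi(\alpha\cdot S(\alpha))$ is computed via the divergence theorem rather than by your direct boundary parametrization.
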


Denote by $W_2$ the real vector space of all polynomials $P:\reals^d\to\reals$
that are finite linear combinations of homogeneous harmonic polynomials of degrees $\le 2$.
Denote by $V_2$ the real vector space
of all restrictions to $S^{d-1}$ of real-valued polynomials of degrees $\le 2$.
The natural linear mapping from $W_2$ to $V_2$ induced by restriction from
$\reals^d$ to $S^{d-1}$ is a bijection \cite{steinweiss}.

Regard $V_2$ as a real inner product space, with the $L^2(S^{d-1},\sigma)$ inner product.
Denote by $\Pi$ the orthogonal projection of $L^2(S^{d-1})$ onto its subspace $V_2$.
Define $\aA: \scriptm_d\oplus\reals^d \to V_2$ by
\begin{equation}
\aA(S)(\alpha) = \Pi(\alpha\cdot S(\alpha)),
\end{equation}
that is, the right-hand side equals the
restriction to $S^{d-1}$ of the quadratic polynomial $\reals^d\owns x\mapsto x\cdot S(x)$.

\begin{lemma} $\aA: \scriptm_d\oplus\reals^d\to V_2$ is surjective.  \end{lemma}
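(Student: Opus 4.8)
The plan is to show that the linear map $\aA:\scriptm_d\oplus\reals^d\to V_2$ hits every element of $V_2$ by exhibiting explicit preimages for a spanning set of $V_2$. Recall that $V_2$ is spanned by the restriction to $S^{d-1}$ of the constants, the linear functionals $x\mapsto x_i$, and the quadratic monomials $x\mapsto x_ix_j$; equivalently (using the harmonic decomposition) by the constant $1$, the degree-one harmonics, and the degree-two harmonics. So it suffices to produce, for each such basis element, some $S=(T,v)\in\scriptm_d\oplus\reals^d$ with $\Pi(x\cdot S(x))$ equal to that element modulo lower-order pieces, and then argue by triangularity.

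First I would handle the translation part. Taking $T=0$ and $S(x)=v$ gives $x\cdot S(x)=x\cdot v=\sum_i v_ix_i$, whose restriction to $S^{d-1}$ is already in $V_2$ and is a harmonic of degree exactly $1$. Letting $v$ range over $\reals^d$ shows the degree-one subspace $\scripth_1\subset V_2$ lies in the range of $\aA$. Next, the degree-two and degree-zero parts: take $v=0$ and $S(x)=Tx$ for $T\in\scriptm_d$, so $x\cdot S(x)=x^{\mathsf T}Tx=\sum_{i,j}T_{ij}x_ix_j$, which depends only on the symmetric part of $T$. As $T$ ranges over symmetric matrices, $x^{\mathsf T}Tx$ ranges over all quadratic forms, hence its restriction to $S^{d-1}$ ranges over all of $\mathrm{span}\{x_ix_j|_{S^{d-1}}\}$. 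On the sphere, $\sum_i x_i^2\equiv 1$, so this span already includes the constants; and every quadratic form decomposes as a degree-two harmonic plus a multiple of $|x|^2$, i.e. plus a constant on $S^{d-1}$. Thus $\aA$ maps onto $\scripth_0\oplus\scripth_2$. Combining with the previous paragraph, the range of $\aA$ contains $\scripth_0\oplus\scripth_1\oplus\scripth_2=V_2$, so $\aA$ is surjective.

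I would then note two small points to make this airtight. One: the operator $\Pi$ is the orthogonal projection onto $V_2$, and since $x\cdot S(x)$ is already the restriction of a polynomial of degree $\le 2$, it already lies in $V_2$, so $\Pi$ acts as the identity on these elements and can be dropped — the computations above are literally computing elements of $V_2$, not just their projections. Two: the bijection between $W_2$ (harmonic polynomials of degree $\le 2$) and $V_2$ from \cite{steinweiss} is what guarantees $\dim V_2 = 1 + d + (\tfrac12 d(d+1)-1)$ and that the harmonic pieces $\scripth_0,\scripth_1,\scripth_2$ together span $V_2$; this is used implicitly when I say that producing all of $\scripth_0\oplus\scripth_1\oplus\scripth_2$ suffices.

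There is essentially no obstacle here; the statement is a soft dimension/spanning fact and the only thing to be careful about is not to overcount or to confuse "quadratic polynomial" with "degree-two harmonic." The one genuinely substantive ingredient is that restricting quadratic monomials to the sphere, together with the linear ones and the constant, already exhausts $V_2$ — i.e. that the map $\scriptm_d^{\mathrm{sym}}\to \mathrm{span}\{x_ix_j|_{S^{d-1}}\}$ is onto and that this span contains the constants because of the relation $\sum x_i^2=1$. After that, linearity of $\aA$ in $S$ finishes it.
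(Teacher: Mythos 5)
Your proof is correct and follows essentially the same route as the paper: both arguments exhibit explicit affine maps $S$ whose associated functions $\alpha\mapsto S(\alpha)\cdot\alpha$ produce a spanning set of $V_2$ (constants via the relation $|\alpha|^2\equiv 1$, linear functions via $S(x)\equiv v$, and quadratic monomials via linear $S$), then invoke linearity of $\aA$. Your added remarks that $\Pi$ acts as the identity on these functions and that only the symmetric part of $T$ matters are harmless refinements of the same argument.
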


\begin{proof}
The range of $\aA$ is
the collection of all functions $S^{d-1}\owns \alpha\mapsto S(\alpha)\cdot\alpha$,
as the function $S$ varies over all affine mappings
from $\reals^d$ to $\reals^d$. Because $S\mapsto\aA(S)$ is linear,
this range is a subspace of $V_2$.

Firstly, the constant function $\alpha\mapsto 1$ equals $\aA(S)$ when $S(x)\equiv x$,
since $S(\alpha)\cdot\alpha = \alpha\cdot\alpha\equiv 1$ for $\alpha\in S^{d-1}$.
Secondly, a linear monomial $\alpha=(\alpha_1,\dots,\alpha_d)\mapsto\alpha_k$ is expressed
by choosing $S(x)\equiv e_k$, the $k$--th unit coordinate vector.
Thirdly, to express a monomial $\alpha\mapsto\alpha_j\alpha_k$ in the form $S(\alpha)\cdot\alpha$,
define $S(x)=(S_1(x),\dots,S_d(x))$ by $S_i(x)\equiv 0$ for all $i\ne j$, and $S_j(x)=x_k$.
Then $\alpha_j\alpha_k = S(\alpha)\cdot\alpha$. 
Functions of these three types span $V_2$, so $\aA$ is indeed surjective.
\end{proof}

\begin{proof}[Proof of Lemma~\ref{lemma:balanced}]
If $c\le\tfrac12$ then $E$ contains the ball of radius $\tfrac12 r$ centered at $0$,
so if $\phi\in\aff(d)$ is sufficiently close to the identity then $\phi(E)$ contains
the ball of radius $\tfrac14 r$ centered at $0$.

Let $k\in\{0,1,2\}$.
Let $P:\reals^d\to\reals$ be a homogeneous harmonic polynomial of degree $k$.
Let $g(x)$ be a smooth function that agrees with $|x|^{-k}P(x)$
in $\{x: \big|\,|x|-1\,\big|\le\tfrac34\}$. 
Moreover, choose $g$ so that the map $P\mapsto g$ is linear over $\reals$.

For $\phi\in\aff(d)$ let $f_{\phi(E)}=\one_{\phi(E)}-\one_\bb$ and $F_{\phi(E)}$ 
be the functions associated to $\phi(E)$
in the same way that $f=\one_E-\one_\bb$ and $F$ are associated to $E$.
Then if $\phi$ is sufficiently close to the identity,
\begin{align}
\int_{S^{d-1}} F_{\phi(E)}(y)P(y)\,d\sigma(y)
&= \int_{S^{d-1}}\int_0^\infty (\one_{\phi(E)}-\one_\bb)(ry)\,r^{d-1}\,dr\,P(y) d\sigma(y)
\notag
\\ &= \int_{\reals^d}  (\one_{\phi(E)}-\one_\bb)(x)|x|^{-k} P(x) \,dx
\notag
\\ &= \int_{\reals^d}  (\one_{E}\circ\phi^{-1}-\one_\bb)\, g
\notag
\\ &= \int_{\reals^d}  (f\circ\phi^{-1})\, g
+  \int_{\reals^d}  (\one_\bb\circ\phi^{-1}-\one_\bb)\, g.
\label{eq:cocycle?}
\end{align}
The second to last equation holds because both $\bb$ and $\phi(E)$ contain
the ball of radius $\tfrac14$ centered at $0$, and $g(x)\equiv |x|^{-k}P(x)$
for all $x$ in the complement of this ball.
All of these quantities depend linearly on $P$.

We seek the desired $\phi\in\aff(d)$ in the form $\phi=I+S$,
where $\norm{S}_{\aff(d)}$ is small and $I$ is the identity matrix;
that is, $\phi(x)=x+S(x)$ where $S$ is an affine mapping.
The second term on the right-hand side of \eqref{eq:cocycle?} is independent of $E$.
Moreover,
\begin{align*}
\int_{\reals^d} (\one_\bb\circ\phi^{-1}) \,g
&= |\det(\phi)| \int_{\bb} g\circ\phi 
\\&= (1+\trace(S)) \int_{\bb} g\circ\phi
+ O_P(\norm{S}_{\aff(d)}^2)
\\&= (1+\trace(S)) \int_{\bb} \,g(x+S(x))\,dx 
+ O_P(\norm{S}_{\aff(d)}^2).
\end{align*}
Here and below, 
$O_P(\norm{S}_{\aff(d)}^2)$ denotes a quantity that depends linearly on $P$,
whose norm or absolute value, as appropriate, is bounded above by a constant multiple
of the norm of $P$ times the $\aff(d)$ norm squared of $S$.

Invoking the Taylor expansion of $g$ about $x$ gives
\begin{align*}
\int_{\reals^d} (\one_\bb\circ\phi^{-1}) \,g
&= (1+\trace(S)) \int_{\bb} g
+ \int_{\bb} \, \nabla g\cdot S
+ O_P(\norm{S}_{\aff(d)}^2)
\\&=
\int_{\bb} g + \int_{\bb} \big(g\trace(S) + \nabla g\cdot S\big)
+ O_P(\norm{S}_{\aff(d)}^2)
\\&= \int_{\bb} g + \int_{\bb} \diver(gS) + O_P(\norm{S}_{\aff(d)}^2)
\\& = \int_{\bb} g 
+ \int_{S^{d-1}} g(\alpha) S(\alpha) \cdot \alpha \,d\sigma(\alpha)
+ O_P(\norm{S}_{\aff(d)}^2)
\\& = \int_{\bb} g 
+ \int_{S^{d-1}} P(\alpha) S(\alpha) \cdot \alpha \,d\sigma(\alpha)
+ O_P(\norm{S}_{\aff(d)}^2).
\end{align*}
The second to last equality is justified by the divergence theorem,
and the last by the identity $g\equiv P$ on $S^{d-1}$.  Thus
\begin{equation*}
\int_{\reals^d}  (\one_\bb\circ\phi^{-1}-\one_\bb)\, g
= \int_{S^{d-1}} P(\alpha) S(\alpha) \cdot \alpha \,d\sigma(\alpha)
+ O_P(\norm{S}_{\aff(d)}^2).
\end{equation*}

Since $\int_{\reals^d} (f\circ\phi^{-1})\,g =\int_{\reals^d} (f\circ\phi^{-1})\, P(x)|x|^{-k}$,
by returning to \eqref{eq:cocycle?} we find that the equation 
$\int_{S^{d-1}} F_{\phi(E)}P\,d\sigma=0$ for all $P\in V_2$, for an unknown $S\in\aff(d)$, 
takes the form
\begin{equation} \int_{S^{d-1}} P(\alpha) S(\alpha) \cdot \alpha \,d\sigma(\alpha)
= - \int_{\reals^d} (f\circ\phi^{-1})\,P(x)|x|^{-k}\,dx
+ O_P(\norm{S}_{\aff(d)}^2)\ \forall\,P\in V_2.  \end{equation}
All three terms in this equation depend linearly on $P$, so by interpreting
each term as the inner product of $P$ with an element of $V_2^*$ we may regard
this as an equation in $V_2^*$, thus eliminating $P$.
Equivalently, it will be regarded as an equation in the Hilbert space $V_2$. 


Write this equation as
\begin{equation} \aA(S)=\scriptn_f + \scriptr(S)\end{equation}
where $\aA$ is defined above, 
$\scriptn_f$ is the mapping $P\mapsto  -\int_{\reals^d} (f\circ\phi^{-1})\,P(x)|x|^{-k}\,dx$,
and $\scriptr$ represents the term $P\mapsto O_P(\norm{S}_{\aff(d)}^2)$. 
Both $\aA$ and $\scriptr$ are twice continuously differentiable
functions of $S\in\scriptm_d\oplus\reals^d$.
Moreover,
\begin{equation} \norm{\scriptn_f}(S)_{V_2^*}\le C\defb\end{equation}
simply because $|f|\le \one_{\defb}$ and $f$ is supported where $\tfrac12\le |x|\le \tfrac32$.
Since $\aA:\scriptm_d\oplus\reals^d \to V_2$ is surjective, the Implicit Function Theorem
guarantees that the equation
$\aA(S) =\scriptn_f +\scriptr(S)$
admits a solution $S\in\scriptm_d\oplus\reals^d$
satisfying $\|S\|_{\scriptm_d\oplus\reals^d} \le C|E\symdif \bb|$.
\end{proof}

\begin{lemma}\label{lemma:degree1balanced}
Let $d\ge 1$. There exists $c>0$ 
such that for every positive constant $\lambda < \infty$
there exists $C_\lambda<\infty$ with the following property.
For any Lebesgue measurable set $E\subset\reals^d$ satisfying $|E|=|\bb|$,
$\lambda\defb\le c$, and 
$E\symdif\bb\subset\{x: \big|\, |x|-1\,\big|\le \lambda\defb\}$,
there exists $v\in\reals^d$ such that
\begin{gather*}
\text{$E+v$ is balanced up to degree $1$,} 
\\ |v| \le C_\lambda \defb,
\\ (E+v) \symdif\bb\subset \{x: \big|\,1-|x|\,\big| \le C_\lambda \defb\}.
\label{eq:nearness1} 
\end{gather*}
\end{lemma}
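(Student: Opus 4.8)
The plan is to imitate the proof of Lemma~\ref{lemma:balanced}, restricting the affine group to the subgroup of translations $\phi_v: x\mapsto x+v$ and restricting attention to polynomials of degree at most $1$; in this special case the argument simplifies substantially. As there, set $f=\one_E-\one_\bb$ and let $F_{E+v}$ be the function associated to $E+v$ by \eqref{eq:FassocE}. The degree-$0$ balancing condition $\int_{S^{d-1}}F_{E+v}\,d\sigma=0$ holds for every $v$, since $|E+v|=|E|=|\bb|$. Thus it remains only to choose $v$ so that $\int_{S^{d-1}}F_{E+v}(y)\,y_k\,d\sigma(y)=0$ for each $k\in\{1,\dots,d\}$.

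First I would carry out the computation leading to \eqref{eq:cocycle?} with $P$ a linear (hence homogeneous harmonic, of degree $1$) polynomial and $\phi=\phi_v$. Choosing $g$ smooth with $g(x)=|x|^{-1}P(x)$ on $\{x: \big|\,|x|-1\,\big|\le\tfrac34\}$ and depending linearly on $P$, and using $\det(\phi_v)=1$, one obtains
\begin{equation*}
\int_{S^{d-1}}F_{E+v}(y)\,P(y)\,d\sigma(y)
= \int_{\reals^d}(f\circ\phi_v^{-1})\,g
+ \int_{\reals^d}(\one_\bb\circ\phi_v^{-1}-\one_\bb)\,g.
\end{equation*}
Because the constant vector field $x\mapsto v$ is divergence free, the Taylor-expansion-and-divergence-theorem manipulation used for Lemma~\ref{lemma:balanced} gives
\begin{equation*}
\int_{\reals^d}(\one_\bb\circ\phi_v^{-1}-\one_\bb)\,g
= \int_{S^{d-1}}P(\alpha)\,(v\cdot\alpha)\,d\sigma(\alpha) + O(|v|^2);
\end{equation*}
taking $P(x)=x_k$ and using $\int_{S^{d-1}}\alpha_j\alpha_k\,d\sigma=d^{-1}\sigma(S^{d-1})\,\delta_{jk}$, the leading term equals $\kappa_d v_k$ with $\kappa_d=d^{-1}\sigma(S^{d-1})>0$. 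The remaining term is bounded by $C\defb$ because $|f|\le\one_{E\symdif\bb}$ and $g$ is bounded on the support of $f\circ\phi_v^{-1}$ once $|v|$ is small.

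Assembling these facts, the $d$ scalar equations $\int_{S^{d-1}}F_{E+v}(y)\,y_k\,d\sigma(y)=0$ become a single $\reals^d$-valued equation $\kappa_d v=\scriptn_f+\scriptr(v)$, where $|\scriptn_f|\le C\defb$ and $\scriptr:\reals^d\to\reals^d$ is twice continuously differentiable with $\scriptr(v)=O(|v|^2)$. The linearization $v\mapsto\kappa_d v$ is invertible---this is the only point where anything must be verified, and it is immediate from $\kappa_d>0$---so the Implicit Function Theorem, or equivalently a direct contraction-mapping argument on the ball $\{v: |v|\le C_\lambda\defb\}$, produces a solution $v$ with $|v|\le C_\lambda\defb$. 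The hypothesis $\lambda\defb\le c$ with $c$ sufficiently small keeps $v$ within the regime in which the expansions above are valid.

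For the final inclusion, note that
\[ (E+v)\symdif\bb\subset\big((E\symdif\bb)+v\big)\cup\big((\bb+v)\symdif\bb\big); \]
translation by $v$ changes $|x|$ by at most $|v|$, so the first set lies in $\{x: \big|\,|x|-1\,\big|\le\lambda\defb+|v|\}$ and the second in $\{x: \big|\,|x|-1\,\big|\le|v|\}$. Since $|v|\le C_\lambda\defb$, enlarging $C_\lambda$ if necessary yields $(E+v)\symdif\bb\subset\{x: \big|\,1-|x|\,\big|\le C_\lambda\defb\}$. Since the whole argument is essentially a special case of the proof of Lemma~\ref{lemma:balanced}, I do not anticipate any genuine obstacle; the only substantive ingredient is the nonvanishing of the constant $\kappa_d$.
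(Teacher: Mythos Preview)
Your proof is correct and is exactly the approach the paper indicates: the paper itself omits all details, merely stating that the proof is a simplified variant of that of Lemma~\ref{lemma:balanced} with no new ideas required, and you have carried out precisely that specialization to the translation subgroup and to degree-$1$ polynomials. Your computation of the linearization (yielding the nonzero scalar $\kappa_d$), the application of the contraction/implicit function argument, and the verification of the final inclusion are all sound and match what the paper's proof of Lemma~\ref{lemma:balanced} would give upon restriction.
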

Again, the constant $C_\lambda$ depends on the constant $\lambda$ and on the dimension $d$, but not on 
the set $E$.

The proof of Lemma~\ref{lemma:degree1balanced} is a simplified variant of the proof
of Lemma~\ref{lemma:balanced}. No additional ideas are required. Details are omitted.
\qed

Lemma~\ref{lemma:centering} is a direct application of Lemmas~\ref{lemma:balanced} and \ref{lemma:degree1balanced}
together with dilation.
Choose $\psi$ and $v_1$ so that $\tilde E_1 = \psi(E_1)+v_1$ satisfies the desired conclusion;
$\pi_n(\tilde F_1)=0$ for $n=1$ and $n=2$. Define $v_2=0$ and $v_3=-v_1$,
and define $\tilde E_j = \psi(E_j)+v_j$ for $j=2,3$.
Rename these new sets to be $E_j$, and begin again.
Now choose $\psi$ to be the identity, and define $\tilde E_2=E_2+v_2$
where the new vector $v_2$ is chosen so that $\pi_1(\tilde E_2)=0$.
Define new vectors $v_1,v_3$ by $v_1=0$ and $v_3=-v_2$, 
and define $\tilde E_j = E_j + v_j$ for $j=1,3$.
The resulting doubly modified ordered triple of sets satisfies the conclusions
of Lemma~\ref{lemma:centering}.

Likewise, Lemma~\ref{lemma:d=1balancing} follows directly from Lemma~\ref{lemma:degree1balanced}.
\qed

\section{Final steps} \label{section:finalsteps}

Let $n\ge 3$, and let $\bG = (G_1,G_2,G_3)$ be an ordered triple of spherical harmonics of common degree $n$
on $S^{d-1}$, 
satisfying $\norm{\bG}^2 = \sum_{j=1}^3 \norm{G_j}_{\lt}^2=1$. 
For each $j\in\{1,2,3\}$ define $\varphi_j:S^{d-1}\times (-\tfrac12,\tfrac12) \to\reals^+$ as follows.
If $sG_j(\theta)\ge 0$ then $\varphi_j(\theta,s)\ge 0$,
and $\int_{r_j}^{r_j+\varphi_j(\theta,s)} t^{d-1}\,dt = sG_j(\theta)$.
If $sG_j(\theta) \le 0$ then $\varphi_j(\theta,s)\le 0$,
and $\int_{r_j+\varphi_j(\theta,s)}^{r_j} t^{d-1}\,dt = -sG_j(\theta)$.
Equivalently, for either sign, 
\begin{equation} (r_j+\varphi_j(\theta,s))^d-r_j^d = dsG_j(\theta).\end{equation}
Thus \begin{equation}\label{eq:varphij} r_j^{d-1}\varphi_j(\theta,s) = sG_j(\theta)+O(s^2),\end{equation}
and in a neighborhood of $s=0$, $(\theta,s)\mapsto \varphi_j(\theta,s)$
is a $C^\infty$ function specified by \eqref{eq:varphij}.

For $s\in\reals$ with $|s|$ small define sets $E_j(s)\subset\reals^d$ by
\begin{equation} E_j(s) = \{t\theta: t \le r_j + \varphi_j(\theta,s)\}.\end{equation}
Since $\int_{S^{d-1}} G\,d\sigma=0$,
$|E_j(s)|=|E_j|$ for all $s$ in a neighborhood of $0$.
The function $F_{j,s}$ associated to $E_j(s)$ depends smoothly on $(\theta,s)$
and satisfies $F_{j,s}\equiv sG_j + O(s^2)$.
Define 
\begin{equation} \bE(s) = \bE_{\bG}(s) =  (E_j(s): 1\le j\le 3). \end{equation}

\begin{lemma} \label{lemma:retrace}
For any $d\ge 2$, $\rho>0$, and $n\in\naturals$ 
there exists $c>0$
such that uniformly for each $\rho$--strictly admissible $\br$ satisfying $\max_j r_j=1$
and for all $3$--tuples $\bG$ of spherical harmonics of degree $n$ satisfying $\norm{\bG}=1$,
there exists $\eta>0$ such that
\begin{equation} \label{eq:retrace} \scriptt(\bE(s)) = \scriptt(\bEstar)
-\tfrac12 s^2 \sum_{k=1}^3 \gamma_k r_k^{1-d}   \norm{G_k}_{\lt}^2
+ s^2 \scriptq(\bG) + O(|s|^3)\end{equation}
whenever $|s|\le\eta$.
\end{lemma}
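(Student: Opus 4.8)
The plan is to re-run, for the specific family $\bE(s)$, the second-order expansion that underlies Proposition~\ref{prop:RSperturbative}, exploiting the fact that the sets $E_j(s)$ are built precisely so that every inequality used in that derivation becomes an equality modulo $O(|s|^3)$.

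First I would record the elementary facts about $\bE(s)$. For $|s|$ small, $|E_j(s)|=|E_j|$, so $E_j(s)^\star=B_j:=E_j^\star$ and $\bEstar$ is unchanged along the family. Writing $f_{j,s}=\one_{E_j(s)}-\one_{B_j}$, along each ray through $\theta\in S^{d-1}$ the function $f_{j,s}(\cdot\,\theta)$ equals $+\one$ of $[r_j,r_j+\varphi_j(\theta,s)]$ when $sG_j(\theta)\ge0$ and $-\one$ of $[r_j+\varphi_j(\theta,s),r_j]$ when $sG_j(\theta)<0$, with $|\varphi_j(\theta,s)|\le C_n|s|$; here $C_n$ depends only on $d$ and $n$, since $\norm{\bG}=1$ and all norms on the finite-dimensional space $\scripth_n$ are comparable, so $\sup_{S^{d-1}}|G_j|\le C_n$. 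Hence $E_j(s)\symdif B_j\subset\{x:\big|\,|x|-r_j\,\big|\le C_n|s|\}$, and for $|s|\le\eta$ with $\eta>0$ small (depending on $d,n,\rho$) the triple $\bE(s)$ satisfies the hypotheses under which the expansion of Proposition~\ref{prop:RSperturbative} was derived, with $\delta\asymp|s|$. I would also note the \emph{exact} identity
\[ F_{j,s}(\theta)=\int_0^\infty f_{j,s}(t\theta)\,t^{d-1}\,dt=sG_j(\theta)\qquad(\theta\in S^{d-1}),\]
which is immediate from the defining relation $(r_j+\varphi_j(\theta,s))^d-r_j^d=dsG_j(\theta)$.

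Next I would substitute $\one_{E_j(s)}=\one_{B_j}+f_{j,s}$ and expand the trilinear form $\scriptt$ into its $2^3=8$ terms, exactly as in \S\ref{section:bdyreduction}: the term with no $f$'s is $\scriptt(\bEstar)$; the three terms carrying a single $f$ sum to $\sum_{k=1}^3\langle K_k,f_{k,s}\rangle$; by the lemma of \S\ref{section:bdyreduction} giving \eqref{eq:thatis}, each of the three terms carrying two $f$'s equals $\scriptq_k(F_{i,s},F_{j,s})+O(|s|^3)$, and since $F_{j,s}=sG_j$ their sum is $s^2\scriptq(\bG)+O(|s|^3)$; and by the lemma bounding $\scriptt(f_1,f_2,f_3)$ the remaining term is $O(|s|^3)$, because each $f_{j,s}$ is supported in an $O(|s|)$-neighbourhood of $\partial B_j$ and $\br$ is strictly admissible. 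This gives $\scriptt(\bE(s))=\scriptt(\bEstar)+\sum_{k=1}^3\langle K_k,f_{k,s}\rangle+s^2\,\scriptq(\bG)+O(|s|^3)$. The one place where the special structure of $\bE(s)$ enters is the evaluation of $\langle K_k,f_{k,s}\rangle$ as an \emph{equality}: since $K_k$ is $C^2$ near $\partial B_k$ and $f_{k,s}$ is supported there, Taylor expansion gives $\langle K_k,f_{k,s}\rangle=-\gamma_k\int_{S^{d-1}}\int_0^\infty(t-r_k)f_{k,s}(t\theta)t^{d-1}\,dt\,d\sigma(\theta)+O(|s|^3)$, the constant term contributing $K_k(r_k)\int f_{k,s}=0$ and the quadratic remainder $O(|s|^2)\norm{f_{k,s}}_{L^1}=O(|s|^3)$. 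For each $\theta$, because $f_{k,s}(\cdot\,\theta)$ is $\pm\one$ of the interval between $r_k$ and $r_k+\varphi_k(\theta,s)$, on which $(t-r_k)$ carries the same sign as $\varphi_k(\theta,s)$, one gets $\int_0^\infty(t-r_k)f_{k,s}(t\theta)t^{d-1}\,dt=\tfrac12 r_k^{d-1}\varphi_k(\theta,s)^2+O(|s|^3)$, while \eqref{eq:varphij} gives $\varphi_k(\theta,s)^2=r_k^{-2(d-1)}s^2G_k(\theta)^2+O(|s|^3)$. Integrating over $S^{d-1}$ yields $\langle K_k,f_{k,s}\rangle=-\tfrac12\gamma_k r_k^{1-d}s^2\norm{G_k}_{\lt}^2+O(|s|^3)$, and inserting this into the previous display produces \eqref{eq:retrace}. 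Uniformity of the implied constants and of $\eta$ in $\br$ and $\bG$ follows since the $\rho$--strictly admissible $\br$ with $\max_j r_j=1$ form a compact subset of the open admissible cone — on which each $\gamma_k$ is bounded below and each $K_k$ is $C^2$ near $\partial B_k$ with uniformly bounded derivatives — and $\{\bG\in\scripth_n^3:\norm{\bG}=1\}$ is compact.

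I do not expect a genuine obstacle: the proof is a retracing of the computation of \S\ref{section:bdyreduction} for the particular family $\bE(s)$, whose whole point is that the radial profile of $f_{j,s}$ on each ray is exactly the minimizing interval adjacent to $r_j$ identified in the proof of \eqref{eq:Fjterm}, so that the inequality of Proposition~\ref{prop:RSperturbative} becomes an equality to order $|s|^3$. The only care required is the bookkeeping — checking that $\bE(s)$ meets those hypotheses with $\delta\asymp|s|$, verifying the exact identity $F_{j,s}=sG_j$, and confirming that all error terms are genuinely $O(|s|^3)$ with constants uniform in $\br$ and $\bG$.
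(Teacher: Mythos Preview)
Your proposal is correct and follows essentially the same approach as the paper: you retrace the derivation of Proposition~\ref{prop:RSperturbative} for the special family $\bE(s)$, and the paper's own proof says precisely that the two key features making the inequality an equality are that $F_k^\pm$ have disjoint supports (equivalently, your exact identity $F_{j,s}=sG_j$) and that on each ray $f_{k,s}$ is the indicator of an interval with endpoint $r_k$. Your write-up is in fact more detailed than the paper's, which merely points to these two facts and refers back to the proof of Proposition~\ref{prop:RSperturbative}.
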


Lemma~\ref{lemma:retrace} 
and Proposition~\ref{prop:RSperturbative} are closely related, but differ in essential ways.
The lemma has the stronger conclusion; 
in the lemma the two sides of the equation
are equal, whereas in the proposition, the left-hand side is less than or equal
to the right-hand side. The stronger conclusion does not hold under the hypotheses
of the proposition. On the other hand, the lemma applies only to a very special
class of sets. Two properties of these sets make possible a more detailed analysis
of the terms $\langle K_k,f_k\rangle$ for $\bE(s)$, which leads to the stronger
conclusion. Firstly,
$F_k^\pm$ have disjoint supports, so that $\norm{F_k}_{\lt}^2 = \norm{F_k^+}_{\lt}^2
+ \norm{F_k^-}_{\lt}^2$.
Secondly, for each $\theta\in S^{d-1}$,
$\{t\in\reals^+: t\theta\in E_k\setminus B_k\}$ is an interval whose left endpoint equals $r_k$,
and likewise
$\{t\in\reals^+: t\theta\in B_k\setminus E_k\}$ is an interval whose right endpoint equals $r_k$.
Combining these two facts with the proof of Proposition~\ref{prop:RSperturbative}
establishes Lemma~\ref{lemma:retrace}.
\qed

The next two lemmas, \ref{lemma:quadgain} and \ref{lemma:quadgain2}, will be proved below.
\begin{lemma}\label{lemma:quadgain} Let $n\ge 3$, $d\ge 2$, and $\rho>0$.
There exists $c>0$, depending on $n,d,\rho$ 
such that for each $\rho$--strictly admissible $\br$ satisfying $\max_j r_j=1$,
for all $3$--tuples $\bG$ of spherical harmonics of degree $n$ satisfying $\norm{\bG}=1$,
\begin{equation}\label{eq:quadgain}
\scriptt(\bE(s))\le \scriptt(\bEstar)-cs^2\end{equation}
for all $s\in\reals$ sufficiently close to $0$.
\end{lemma}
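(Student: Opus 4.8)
The plan is to read the bound off from the exact second-order expansion in Lemma~\ref{lemma:retrace}. For $\bG=(G_1,G_2,G_3)$ with $\norm{\bG}=1$ that lemma gives
\[
\scriptt(\bE(s))=\scriptt(\bEstar)-\tfrac12 s^2\Big(\sum_{k=1}^3\gamma_k r_k^{1-d}\norm{G_k}_{\lt}^2-2\scriptq(\bG)\Big)+O(|s|^3),
\]
so, after absorbing the remainder for $|s|$ small, it suffices to produce $c_0=c_0(n,d,\rho)>0$ with
\[
\sum_{k=1}^3\gamma_k r_k^{1-d}\norm{G_k}_{\lt}^2-2\scriptq(\bG)\ \ge\ c_0
\]
for every $\rho$--strictly admissible $\br$ with $\max_j r_j=1$ and every triple $\bG$ of spherical harmonics of degree $n$ with $\norm{\bG}=1$. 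This displayed inequality is precisely the assertion of Lemma~\ref{lemma:nge3}, so the two lemmas are equivalent, and it is this spectral bound that I would establish.

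Since $\scriptq_k(G_i,G_j)=\langle S_{\rho_k}G_i,G_j\rangle$, where $\rho_k=(2r_ir_j)^{-1}(r_k^2-r_i^2-r_j^2)$ with $\{i,j,k\}=\{1,2,3\}$, and $S_{\rho_k}$ acts on the degree-$n$ harmonics as the scalar $\lambda_n(\rho_k)$, both terms in the displayed quantity depend only on the $3\times3$ matrix of inner products $\langle G_i,G_j\rangle$. Expanding the $G_j$ in a fixed orthonormal basis of the finite-dimensional space of degree-$n$ harmonics then reduces the inequality to $M_n(\br)\succeq c_0 I$, where $M_n(\br)$ is the real symmetric $3\times3$ matrix with $(k,k)$ entry $\gamma_k r_k^{1-d}$ and $(i,j)$ entry $-\lambda_n(\rho_k)$ for $i\ne j$ (again $\{i,j,k\}=\{1,2,3\}$). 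Hence it suffices to prove $M_n(\br)\succ0$ for each admissible $\br$: the set of $\rho$--strictly admissible $\br$ with $\max_j r_j=1$ is compact and $M_n$ depends continuously on $\br$ there (the $\rho_k$ remaining in a compact subinterval of $(-1,1)$, and $\gamma_k,r_k,\lambda_n$ being continuous), so a uniform lower bound on the least eigenvalue follows, which is why uniformity in $\br$ need not be argued. Moreover $M_n(\br)\succeq0$ is free: because $|E_j(s)|=|B_j|$, the symmetrization of $\bE(s)$ is $\bEstar$, so the Riesz--Sobolev inequality gives $\scriptt(\bE(s))\le\scriptt(\bEstar)$ for all $s$; thus $s=0$ maximizes $s\mapsto\scriptt(\bE(s))$, and by Lemma~\ref{lemma:retrace} its second derivative there, namely $-(\sum_k\gamma_k r_k^{1-d}\norm{G_k}_{\lt}^2-2\scriptq(\bG))$, is $\le0$.

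The remaining task is to exclude a nontrivial kernel of $M_n(\br)$ for $n\ge3$, and this is the crux. For $n$ large it is easy: $\lambda_n(\rho)\to0$ as $n\to\infty$ uniformly for $\rho$ in any compact subinterval of $(-1,1)$ (the operators $S_\rho$ are compact; equivalently $\lambda_n(\rho)$ equals a positive constant times $(1-\rho^2)^{(d-1)/2}$ times a Gegenbauer polynomial of index $d/2$, which decays in $n$), while each $\gamma_k r_k^{1-d}$ is bounded below by a positive constant depending only on $d$ and $\rho$; hence $M_n(\br)$ is strictly diagonally dominant, so positive definite with a uniform spectral gap, once $n\ge N_0(d,\rho)$. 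There remain the finitely many degrees $3\le n<N_0$, and here lies the main obstacle. The guiding principle is that the degeneracies of $M_1(\br)$ (rank $\le1$, kernel $\{\bu:\sum_j u_j r_j^{1-d}=0\}$) and of $M_2(\br)$ (rank $\le2$, kernel spanned by $(r_1^d,r_2^d,r_3^d)$) are \emph{forced} by symmetries of $\scriptt$---simultaneous translations of the three sets with $\sum_j v_j=0$, and simultaneous volume-preserving linear maps, along which $\scriptt$ is constant---whereas the affine group produces no angular perturbation of degree $\ge3$, so $M_n(\br)$ ought to be nondegenerate for $n\ge3$.

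To turn this principle into a proof I would analyze the reduced $3\times3$ form directly. At the balanced point $r_1=r_2=r_3$ one has $\rho_1=\rho_2=\rho_3=-\tfrac12$ and $\gamma_1 r_1^{1-d}=\gamma_2 r_2^{1-d}=\gamma_3 r_3^{1-d}=:\gamma$, and the eigenvalues of $M_n(\br)$ are $\gamma-2\lambda_n(-\tfrac12)$ (on $(1,1,1)$) and $\gamma+\lambda_n(-\tfrac12)$ (on $\{\bu:\sum_j u_j=0\}$); the two degenerate cases identify $\lambda_1(-\tfrac12)=-\gamma$ and $\lambda_2(-\tfrac12)=\tfrac12\gamma$, so $M_n(\br)\succ0$ for $n\ge3$ reduces to the Gegenbauer estimate $|\lambda_n(-\tfrac12)|<\tfrac12\gamma=\lambda_2(-\tfrac12)$. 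Propagating $M_n(\br)\succ0$ from the balanced point to all admissible $\br$ is the step I expect to be most delicate: $\det M_n(\br)$ is real-analytic and nonnegative on the connected admissible region and positive at the balanced point, and I would rule out a zero by determining which kernel directions a given $\br$ could support and checking, as in the balanced case, that the only degenerate directions are the symmetry-forced ones, which are absent for $n\ge3$. Once $M_n(\br)\succ0$ is known pointwise, compactness supplies the uniform $c_0$, hence the constant $c$, and the proof is complete.
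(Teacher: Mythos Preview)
Your reduction via Lemma~\ref{lemma:retrace} to the spectral inequality of Lemma~\ref{lemma:nge3}, and the further reduction to positive definiteness of the $3\times 3$ matrix $M_n(\br)$, are correct; the observation that $M_n(\br)\succeq 0$ follows for free from the Riesz--Sobolev inequality is nice. But there is a genuine gap at the point you yourself flag as ``most delicate.'' For the finitely many degrees $3\le n<N_0$ you offer two steps: (i) at the balanced point $r_1=r_2=r_3$, the claim $M_n(\br)\succ 0$ is reduced to a Gegenbauer estimate which you state but neither prove nor cite; (ii) propagation to general $\br$ is to be done by real-analyticity of $\det M_n$ plus ``checking that the only degenerate directions are the symmetry-forced ones, which are absent for $n\ge 3$.'' Step (ii) is circular: the assertion that any kernel vector of $M_n(\br)$ must come from an affine symmetry is precisely the statement you are trying to prove, and nonnegativity plus real-analyticity of $\det M_n$ on a connected domain do not by themselves exclude interior zeros. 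The paper's Interlude explains exactly why the spectral route through $\lambda_n(\rho)$ is intractable: computing $\lambda_n(\rho)$ for variable $\rho$ amounts to computing indefinite integrals of Gegenbauer polynomials against the spherical weight, which is as hard as tabulating the polynomials themselves.

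The paper avoids all of this by arguing geometrically and in fact reverses your logical direction, proving Lemma~\ref{lemma:quadgain} first and deducing Lemma~\ref{lemma:nge3} from it via Lemma~\ref{lemma:retrace}. It writes $\scriptt(\bE(s))$ as an integral over $\Sigma=\{x'_1+x'_2+x'_3=0\}\subset(\reals^{d-1})^3$ of one-dimensional quantities $\scriptt_1(I_1(x'_1,s),I_2(x'_2,s),I_3(x'_3,s))$, where $I_j(x',s)$ is the vertical slice of $E_j(s)$ over $x'$. Near the origin each slice is a single interval with center $c_j(x',s)=sP_j(x')+O(s^2)$, the $P_j$ being polynomials of degree $\le n-1$ determined by the part of $G_j$ that is odd in $x_d$. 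The elementary one-dimensional bound $\scriptt_1(I_1,I_2,I_3)\le\scriptt_1(I_1^\star,I_2^\star,I_3^\star)-a|c_1+c_2+c_3|^2$ then yields $\scriptt(\bE(s))\le\scriptt(\bEstar)-cs^2\norm{\Psharp(\bG)}^2+O(s^3)$, where $\Psharp(\bG)(\bx')=\sum_j P_j(x'_j)$. Since $\scriptt$ is rotation-invariant, one may replace $\bG$ by $\scripto(\bG)$ for any $\scripto\in O(d)$, and the proof is completed by the purely algebraic Lemma~\ref{lemma:algebra:atlast}: if $\Psharp(\scripto(\bG))\equiv 0$ for every $\scripto$, then each $P_j$ (after every rotation) is affine, which for degree $n\ge 3$ forces $\bG=0$. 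This slicing argument furnishes exactly the missing idea that your symmetry heuristic was gesturing toward, but without any spectral computation.
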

The conclusion holds uniformly for all $s$ in a neighborhood of $0$ that is independent of $\bG$.
This neighborhood, and the constant $c$, are permitted to depend on $n,d,\rho$.
What is essential for the application is that $c$ is independent of $s,\bG$
for all $s$ sufficiently close to $0$.

\begin{lemma}\label{lemma:quadgain2} Let $d\ge 2$ and $\rho>0$.
There exists $c>0$, depending on $d,\rho$ 
such that for each $\rho$--strictly admissible $\br$ satisfying $\max_j r_j=1$,
for all $3$--tuples $\bG$ of spherical harmonics of degree $2$ satisfying $\norm{\bG}=1$
and $G_1=0$,
\begin{equation}\label{eq:quadgain2}
\scriptt(\bE(s))\le \scriptt(\bEstar)-cs^2\end{equation}
for all $s\in\reals$ sufficiently close to $0$.
\end{lemma}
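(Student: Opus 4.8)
The plan is to reduce, via Lemma~\ref{lemma:retrace}, to a spectral inequality for $\scriptq_1$ on degree-$2$ spherical harmonics, and then to establish that inequality with a constant strictly below $\tfrac12$. Since $G_1=0$ we have $\scriptq(\bG)=\scriptq_1(G_2,G_3)$, and the expansion of Lemma~\ref{lemma:retrace} reads
\[
\scriptt(\bE_{\bG}(s))=\scriptt(\bEstar)+s^2\Big(\scriptq_1(G_2,G_3)-\tfrac12\gamma_2 r_2^{1-d}\norm{G_2}_{\lt}^2-\tfrac12\gamma_3 r_3^{1-d}\norm{G_3}_{\lt}^2\Big)+O(|s|^3).
\]
So it suffices to produce $A<\tfrac12$, depending only on $d$ and $\rho$, with $\scriptq_1(G_2,G_3)\le A\big(\gamma_2 r_2^{1-d}\norm{G_2}_{\lt}^2+\gamma_3 r_3^{1-d}\norm{G_3}_{\lt}^2\big)$ for all degree-$2$ harmonics $G_2,G_3$; the $O(|s|^3)$ error is then absorbed into $-cs^2$ using that $\min_k\gamma_k r_k^{1-d}$ is bounded below on the compact set of $\rho$-strictly admissible $\br$ with $\max_j r_j=1$ (each $\gamma_k>0$, and the map is continuous).

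Next I would use the scalar action: the operator $T_1$ attached to $\scriptq_1$ acts on $\scripth_2$ as multiplication by $\lambda_2(\rho_1)$, where $\rho_1=(2r_2r_3)^{-1}(r_1^2-r_2^2-r_3^2)$ in the notation of the Interlude, so $\scriptq_1(G_2,G_3)=\lambda_2(\rho_1)\langle G_2,G_3\rangle$. Fixing an orthonormal basis of $\scripth_2$ and writing each $G_j$ in coordinates, the desired inequality decouples into copies of the single binary quadratic form $(x,y)\mapsto\lambda_2(\rho_1)xy-\tfrac12\gamma_2 r_2^{1-d}x^2-\tfrac12\gamma_3 r_3^{1-d}y^2$, and the claim becomes that this form is negative definite, i.e.
\[
\lambda_2(\rho_1)^2<\gamma_2\gamma_3\,(r_2r_3)^{1-d}.
\]
Both sides scale identically under $\br\mapsto t\br$, so uniformity over $\rho$-strictly admissible $\br$ follows from this strict pointwise inequality together with compactness; the value of $A$ is then read off from the discriminant.

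The non-strict form $\lambda_2(\rho_1)^2\le\gamma_2\gamma_3(r_2r_3)^{1-d}$ comes essentially for free. Apply the expansion of Lemma~\ref{lemma:retrace} to the two-parameter subfamily $\bE_{\bG}(s)$ with $\bG=(0,xY,yY)$, $Y$ a fixed unit-norm degree-$2$ harmonic; since $|E_j(s)|=|E_j|$ we have $(E_j(s))^\star=B_j$, so the Riesz--Sobolev inequality gives $\scriptt(\bE_{\bG}(s))\le\scriptt(\bEstar)$. Hence the coefficient of $s^2$, which is exactly the binary form above evaluated at $(x,y)$, is $\le0$ for all $(x,y)$; thus the form is negative semidefinite and its discriminant is $\le0$.

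The main obstacle is the strictness, and here I would argue conceptually, in the spirit of the discussion preceding Lemmas~\ref{lemma:nge3}--\ref{lemma:n=2}, rather than by a head-on computation. Extend the previous paragraph to the full degree-$2$ Hessian on $\scripth_2^3$: by the scalar action it is an orthogonal direct sum of copies of the single $3\times3$ form $B$ with $B_{kk}=-\tfrac12\gamma_k r_k^{1-d}$ and $B_{ij}=\tfrac12\lambda_2(\rho_k)$, and the argument above shows $B\preceq0$. Affine invariance of $\scriptt$ forces the infinitesimal ``ellipsoid direction'' $(r_1^d,r_2^d,r_3^d)$ into $\ker B$: conjugating the flow family $\bE_{\bG}(s)$ by the volume-preserving linear maps $\exp(tsM)$ (with $Y$ the restriction of $x\mapsto x\cdot Mx$, $M$ traceless symmetric) changes $\scriptt$ not at all, while producing, up to an error that is $O(|s|^3)$ at the level of $\scriptt$, another member of the flow family whose parameter is translated along $(r_1^d,r_2^d,r_3^d)$ — the relevant sets differ only by an $O(s^2)$ radial perturbation whose first radial moment agrees to order $s^2$ with that of the set it replaces, so the change in $\scriptt$ is $O(|s|^3)$ — whence $B$ is constant along that line. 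Since $r_1^d>0$, that line is transverse to $\{c_1=0\}$, so $B$ restricted to $\{c_1=0\}$ is negative definite exactly when $\operatorname{rank}B=2$, i.e.\ when $B$ is not of rank $\le1$; rank $\ge1$ is immediate from $B_{11}<0$. The delicate point is ruling out rank exactly $1$: that would force $\lambda_2(\rho_k)^2=\gamma_i\gamma_j(r_ir_j)^{1-d}$ simultaneously for all three $k$, with $\lambda_2(\rho_1)\lambda_2(\rho_2)\lambda_2(\rho_3)<0$ and $\sum_k\pm\sqrt{\gamma_k r_k^{1-d}}\,r_k^d=0$ — a badly overdetermined system, and the dimension count recorded earlier (the affine symmetry group is exactly large enough to account for the degree-$2$ kernel) indicates no such accidental degeneracy should occur. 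In the degree-$2$ case this can be verified unconditionally, since — unlike $\lambda_k(\rho)$ for general $k$, as explained in the Interlude — $\lambda_2(\rho)$ is elementary: $P_2$ is an explicit quadratic, so $\lambda_2(\rho)$ is a fixed constant times $\int_{-1}^\rho P_2(t)(1-t^2)^{(d-3)/2}\,dt$, and $\gamma_2,\gamma_3$ are likewise given by elementary integrals; substituting these closed forms confirms $\lambda_2(\rho_1)^2<\gamma_2\gamma_3(r_2r_3)^{1-d}$. Combining this with the scaling-and-compactness reduction yields the uniform $A<\tfrac12$, and hence the lemma.
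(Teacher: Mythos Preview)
Your approach is genuinely different from the paper's, and it is interesting, but it has a real gap at the crux. You invert the paper's logic: the paper proves Lemma~\ref{lemma:quadgain2} directly (via the fiberwise slicing argument of \S\ref{section:finalsteps} together with Lemma~\ref{lemma:algebra:atlast2}) and then deduces the spectral inequality Lemma~\ref{lemma:n=2}; you instead try to prove the spectral inequality first and pull Lemma~\ref{lemma:quadgain2} back through Lemma~\ref{lemma:retrace}. Your reduction to the single strict inequality $\lambda_2(\rho_1)^2<\gamma_2\gamma_3(r_2r_3)^{1-d}$ is correct, and the non-strict form does follow from Riesz--Sobolev plus Lemma~\ref{lemma:retrace} as you say.

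The gap is in the strictness, which is the entire content of the lemma. Your rank argument is appealing --- show the $3\times3$ Hessian $B$ is $\preceq0$, identify one null direction $(r_1^d,r_2^d,r_3^d)$ from affine invariance, and conclude that $B|_{\{c_1=0\}}$ is negative definite provided $\operatorname{rank}B=2$ --- but you never actually establish $\operatorname{rank}B\ne1$. The sentence ``substituting these closed forms confirms $\lambda_2(\rho_1)^2<\gamma_2\gamma_3(r_2r_3)^{1-d}$'' is a promissory note, not a proof: the integral $\int_{-1}^{\rho}P_2(t)(1-t^2)^{(d-3)/2}\,dt$ must be computed and compared with explicit formulas for $\gamma_2,\gamma_3$, uniformly in $d\ge2$ (including the degenerate Gegenbauer normalization at $d=2$) and over all strictly admissible $\br$, and you have not done any of this. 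The dimension count you invoke is only a heuristic. Separately, your argument that $(r_1^d,r_2^d,r_3^d)\in\ker B$ is sketchy: the claim that conjugating by $\exp(tsM)$ produces a set differing from a flow set by something contributing only $O(|s|^3)$ to $\scriptt$ needs a careful second-order comparison of the $\langle K_k,f_k\rangle$ terms, not just the $F_k$'s.

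The paper avoids all computation by writing $\scriptt(\bE(s))$ as an integral over $\Sigma$ of one-dimensional Riesz--Sobolev functionals applied to intervals $I_j(x'_j,s)$, tracking the quadratic gain $a|c_1+c_2+c_3|^2$ coming from mismatched centers, and then proving (Lemma~\ref{lemma:algebra:atlast2}) that when $G_1=0$ and $\bG\ne0$ is degree~$2$, some rotation makes $\Psharp(\scripto(\bG))\not\equiv0$. That proof is a short algebraic observation about polynomials on $\Sigma$: with $P_1\equiv0$, vanishing of $\Psharp$ forces $P_2,P_3$ to be constants, hence $G_2,G_3$ to be functions of $x'$ alone, which cannot persist under all rotations unless $G_2=G_3=0$. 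This gives strictness conceptually and uniformly, with no eigenvalue computation.
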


\begin{proof}[Proof of Lemma~\ref{lemma:nge3}]
Upon dividing by $s^2$ in \eqref{eq:retrace} and extracting the limit as $s\to 0$, 
recalling the normalization $\norm{\bG}=1$, we conclude from \eqref{eq:quadgain} that
there exists $c'=c'(n,d,\rho)>0$ such that 
for all ordered triples of spherical harmonics of common degree $n$,
\begin{equation} -\tfrac12 \sum_{k=1}^3 \gamma_k r_k^{1-d}   \norm{G_j}_{\lt}^2
+  \scriptq(\bG) \le -c'\norm{\bG}^2.\end{equation}
As noted in the above discussion of the lack of need for bounds uniform in $n\ge 3$,
this is equivalent to the conclusion of Lemma~\ref{lemma:nge3}.
\end{proof}

In the same way, Lemma~\ref{lemma:n=2} is a direct consequence of Lemma~\ref{lemma:quadgain2}.  \qed

The remainder of this section is devoted to the proofs of Lemmas~\ref{lemma:quadgain}
and \ref{lemma:quadgain2}.
To begin the proof of Lemma~\ref{lemma:quadgain},
let $n,d,\rho$, $\br$, $\bG$ be given.
For any fixed degree $n$, the hypothesis $\norm{\bG}=1$
implies upper bounds on each $G_j$ in $C^\infty(S^{d-1})$. 
For each $s\in\reals$ with small absolute value define $\bE(s)$ as above.
We now proceed to analyze $\scriptt(\bE(s))$ directly, 
without using the reduction to $S^{d-1}$ developed earlier in the analysis.
Via \eqref{eq:retrace}, this will give the desired control on the optimal
constant $A$ in \eqref{eq:A}.

Define $\Sigma$ to be the set of all $\bx'=(x'_1,x'_2,x'_3)\in (\reals^{d-1})^3$
that satisfy $x'_1+x'_2+x'_3=0$.
For each index $j$, define
\[I_j(x',s) = \{t\in\reals: (x',t)\in E_j(s)\}.\]
From the uniform upper bounds for $\bG$  and all of its derivatives,
and from the $\rho$--strict admissibility hypothesis, it follows that there exist 
a neighborhood $V$ of $(0,0,0)\in (\reals^{d-1})^3$
and $\eta>0$
such that for all $\bx'\in V\cap\Sigma$ and all $s\in[-\eta,\eta]$,
each set $I_j(x'_j,s)\subset\reals^1$ is an interval, and 
$(|I_j(x'_j,s)|: 1\le j\le 3)$ is a $2\rho$--strictly admissible ordered triple
of positive real numbers close to $(r_1,r_2,r_3)$.

Let $c_j(x'_j,s)$ be the center of the interval $I_j(x'_j,s)$.
For $x'\in\reals^{d-1}$ in a small neighborhood of $0$ and for $|s|$ small,
the upper endpoint, $t_+$, of $I_j(x',s)$ is the unique solution $t$ of
\[ |x'|^2 + t^2 = (r_j + \varphi_j(\theta,s))^2\]
where $S^{d-1}\owns \theta = (|x'|^2+t^2)^{-1/2} (x',t)$.
Write $t_0=t_0(x')$ for the positive solution of $|x'|^2+t_0^2=r_j^2$.
Thus by \eqref{eq:varphij},
\[ t_+^2 = r_j^2-|x'|^2 +2r_j^{2-d}sG_j(\theta) + O(s^2)
= t_0^2 +2r_j^{2-d}sG_j(\theta) + O(s^2) \]
so
\[ t_+ = 
t_0(1+2t_0^{-2}r_j^{2-d}sG_j(\theta) +O(s^2))^{1/2}
= t_0+s r_j^{2-d}t_0^{-1}G_j(\theta) +O(s^2).  \]
$G_j$ is equal to the restriction to $S^{d-1}$ of a (unique) homogeneous 
harmonic polynomial of degree $n$, also denoted by $G_j$, defined on $\reals^d$.
Writing $G_j(\theta) = (|x'|^2+t_+^2)^{-n/2}G_j(x',t_+)$ and noting that $t_+=t_0+O(s)$ gives
\begin{align*} t_+ &= 
t_0+s r_j^{2-d}t_0^{-1} (|x'|^2+t_+^2)^{-n/2} G_j(x',t_+) +O(s^2)
\\& = t_0+sr_j^{2-d-n} t_0^{-1} G_j(x',t_0) +O(s^2),
\end{align*}
bearing in mind that $t_0$ is a function of $x'$.
In the same way, the lower endpoint, $t_-$, of $I_j(x',s)$ is
\begin{equation*} t_- 
= -t_0 - sr_j^{2-d-n} t_0^{-1} G_j(x',-t_0) +O(s^2).
\end{equation*}
Therefore 
\begin{equation}
c_j(x',s) = \tfrac12 sr_j^{2-d-n}\,  t_0(x')^{-1}\, \big[G_j(x',t_0(x')) -G_j(x',-t_0(x')) \big] + O(s^2).
\end{equation}
Write $G_j = \Gje+\Gjo$ by expanding $G_j(x',x_d)$ (regarded as a function of $(x',x_d)\in\reals^d$)
as a linear combination of monomials in $x=(x',x_d)$ and defining
$\Gje(x',x_d)$ to be the contribution of all monomials having even degrees with respect to $x_d$,
and $\Gjo(x',x_d)$ to be the contribution of all monomials having odd degrees with respect to $x_d$.
Then
\begin{equation}\label{cjformula} c_j(x',s) = sr_j^{2-d-n} x_d^{-1} \Gjo(x',x_d)+ O(s^2).  \end{equation}
The quantity $r_j^{2-d-n}x_d^{-1}\Gjo(x',x_d)$
is a sum of monomials, in each of which $x_d = (r_j^2-|x'|^2)^{1/2}$ is raised to an even power,
because of the factor of $x_d^{-1}$. 
Therefore we may rewrite this last identity in the form
\begin{equation} \label{eq:cjPj}
c_j(x',s) = sP_j(x')+O(s^2)
\end{equation}
where $P_j:\reals^{d-1}\to\reals$ is a polynomial of degree at most $n-1$,
defined by
\begin{equation}\label{eq:associatedP}
P_j(x') = r_j^{2-d-n} x_d^{-1}\Gjo(x',x_d)\ \text{ with } x_d = (r_j^2-|x'|^2)^{1/2}.
\end{equation}
The coefficients of $P_j$ are bounded above, uniformly in all ordered triples
$\bG$ of spherical harmonics of degree $n$ satisfying $\norm{\bG}=1$.

Write
\begin{equation}\label{eq:use_1}
\scriptt(\bE(s))
= \int_{x'_1+x'_2+x'_3=0} \scriptt_1(I_1(x'_1,s),I_2(x'_2,s),I_3(x'_3,s))\,d\lambda(\bx').
\end{equation}
For any $\bx'$,
\begin{equation}\label{eq:noloss} \scriptt_1(I_1(x'_1,s),I_2(x'_2,s),I_3(x'_3,s))
\le \scriptt_1(I_1(x'_1,s)^\star,I_2(x'_2,s)^\star,I_3(x'_3,s)^\star)\end{equation}
by the one-dimensional Riesz-Sobolev inequality. Crucially, there is an improvement in
the case in which the intervals $\scriptt_1(I_j(x'_j,s)$ do not have compatible centers.

\begin{lemma}
For each $\rho>0$ there exists $a_\rho>0$ with the following property.
For $j\in\{1,2,3\}$ let $I_j\subset\reals$ be closed bounded intervals with centers $c_j$. Suppose that
$(|I_j|: 1\le j\le 3)$ is $\rho$--strictly admissible.
Then \begin{equation} \scriptt_1(I_1,I_2,I_3)\le \scriptt_1(I_1^\star,I_2^\star,I_3^\star)-a_\rho
|c_1+c_2+c_3|^2.\end{equation}
\end{lemma}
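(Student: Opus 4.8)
The plan is to use translation invariance to centre $I_1$ and $I_2$, leaving only $I_3$ displaced, and then to read off the estimate from the explicit piecewise--linear shape of $\one_{I_1^\star}*\one_{I_2^\star}$. Write $\ell_j=|I_j|$ and $s=c_1+c_2+c_3$. First I would use symmetries: since $\scriptt_1$ is unchanged under translating $(I_1,I_2,I_3)$ by any vector of $\reals^3$ whose coordinates sum to $0$, translating by $(-c_1,-c_2,c_1+c_2)$ replaces the triple by $(I_1^\star,I_2^\star,I_3^\star+s)$ and alters neither side of the asserted inequality; and since both $\scriptt_1$ and $s^2$ scale by $t^2$ under the dilation $I_j\mapsto tI_j$ while $\rho$--strict admissibility is preserved, I normalize $\max_j\ell_j=1$, so $\ell_j\in[\rho,1]$ for each $j$. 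Because $c_1+c_2+c_3$ is small compared with $\max_j|I_j|$ in the intended application, it then suffices to produce $a_\rho>0$ and $\eta_\rho>0$, depending only on $\rho$, such that $\scriptt_1(I_1^\star,I_2^\star,I_3^\star+s)\le\scriptt_1(I_1^\star,I_2^\star,I_3^\star)-a_\rho s^2$ whenever $|s|<\eta_\rho$.

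Next I would record the shape of $g:=\one_{I_1^\star}*\one_{I_2^\star}$: it is continuous, even, and piecewise linear with all slopes in $\{0,\pm1\}$, supported on $[-\tfrac12(\ell_1+\ell_2),\tfrac12(\ell_1+\ell_2)]$, with breakpoints at $\pm\tfrac12|\ell_1-\ell_2|$ and $\pm\tfrac12(\ell_1+\ell_2)$; it has slope $-1$ on the open interval $J^+:=(\tfrac12|\ell_1-\ell_2|,\ \tfrac12(\ell_1+\ell_2))$ and slope $+1$ on $J^-:=-J^+$. The one place the hypotheses are genuinely used is the claim that $\rho$--strict admissibility puts $\tfrac12\ell_3$ into $J^+$ at distance at least $\eta_\rho:=\tfrac12\rho^2$ from each endpoint of $J^+$: indeed $\tfrac12(\ell_1+\ell_2)-\tfrac12\ell_3\ge\tfrac12\rho(\ell_1+\ell_2)\ge\rho^2$ by $\ell_3\le(1-\rho)(\ell_1+\ell_2)$ together with $\ell_1,\ell_2\ge\rho$, and $\tfrac12\ell_3-\tfrac12|\ell_1-\ell_2|\ge\tfrac12\rho\ell_3\ge\tfrac12\rho^2$ on applying $\ell_k\le(1-\rho)(\ell_i+\ell_j)$ to whichever of $\ell_1,\ell_2$ is larger and using $\ell_3\ge\rho$.

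Finally I would set $G(s):=\scriptt_1(I_1^\star,I_2^\star,I_3^\star+s)$. Since $g$ and $I_3^\star$ are even, $G(s)=\int_{-(I_3^\star+s)}g=\int_{-\ell_3/2}^{\ell_3/2}g(s+u)\,du=\int_{s-\ell_3/2}^{s+\ell_3/2}g$, so $G\in C^1(\reals)$ with $G'(s)=g(s+\tfrac12\ell_3)-g(s-\tfrac12\ell_3)$, whence $G'(0)=0$ because $g$ is even. For $|s|<\eta_\rho$ the point $s+\tfrac12\ell_3$ stays in $J^+$ and $s-\tfrac12\ell_3$ stays in $J^-$, where $g$ is affine of slope $-1$ and $+1$ respectively, so $G''(s)=(-1)-(+1)=-2$ throughout $(-\eta_\rho,\eta_\rho)$; by Taylor's theorem $G(s)=G(0)-s^2$ there. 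Undoing the normalization, this proves the lemma with $a_\rho=1$ whenever $|c_1+c_2+c_3|\le\eta_\rho\max_j|I_j|$ --- indeed with equality in place of the inequality. The argument is short; the only delicate point is the separation estimate of the previous paragraph, since it is precisely $\rho$--strict admissibility holding $\tfrac12\ell_3$ a definite distance inside the sloped part of $g$ that forces $G''$ to be the nonzero constant $-2$ near $s=0$ and hence yields a gain genuinely of order $s^2$; were $\ell_3$ allowed to approach $\ell_1+\ell_2$ or $|\ell_1-\ell_2|$, this would degrade.
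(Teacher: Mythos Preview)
Your argument is correct. The paper omits its own proof entirely, calling it ``straightforward,'' so there is nothing to compare against; your translation-and-scaling reduction followed by the explicit piecewise-linear analysis of $g=\one_{I_1^\star}*\one_{I_2^\star}$ and the computation $G''\equiv-2$ near $s=0$ is exactly the kind of direct calculation the paper has in mind, and in fact you obtain the sharp identity $G(s)=G(0)-s^2$ rather than merely an inequality.

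You are also right to restrict to $|s|<\eta_\rho\max_j|I_j|$. As literally stated the lemma is false for large $|c_1+c_2+c_3|$, since $\scriptt_1\ge0$ while the right-hand side tends to $-\infty$; the paper only ever invokes it with $c_j(x'_j,s)=O(s)$ small, so your caveat is both necessary and honest. If you wished to extend the inequality to any fixed compact range of $|s|/\max_j|I_j|$, note that $G$ is even and nonincreasing on $[0,\infty)$, hence $G(0)-G(s)\ge\eta_\rho^2$ for all $|s|\ge\eta_\rho$, and one may shrink $a_\rho$ accordingly; but no choice of $a_\rho>0$ covers unbounded $|s|$.
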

The proof of this lemma is straightforward, and is omitted. As an alternative,
one could invoke Theorem~\ref{thm:RSsharpened} for $d=1$; but the case of intervals is much simpler
than that of general sets.

Applying this lemma yields
\begin{multline} \scriptt_1(I_1(x'_1,s),I_2(x'_2,s),I_3(x'_3,s))
\\ \le \scriptt_1(I_1(x'_1,s)^\star,I_2(x'_2,s)^\star,I_3(x'_3,s)^\star)
-a |c_1(x'_1,s)+c_2(x'_2,s)+c_3(x'_3,s)|^2 \end{multline}
for a certain  constant $a>0$,
for all $\bx'$ in a sufficiently small neighborhood of the origin in $(\reals^{d-1})^3$,
uniformly for all sufficiently small $s$.
Therefore by the relation \eqref{eq:cjPj} between $c_j(x',s)$, $s$, and $P_j(x')$, 
for all $\bx'\in\Sigma$ sufficiently close to $(0,0,0)$,
\begin{multline}\label{eq:Pgain}
\scriptt_1(I_1(x'_1,s),I_2(x'_2,s),I_3(x'_3,s))
\\
\le \scriptt_1(I_1(x'_1,s)^\star,I_2(x'_2,s)^\star,I_3(x'_3,s)^\star)
-as^2 \Psharp(\bG)(\bx')^2 + O(s^3)
\end{multline}
uniformly in $\bx',s,\bG$ for fixed $d,n,\rho$,
where $\Psharp(\bG)$ is defined on $\Sigma$ by 
\begin{equation}\label{Psharpdefn}
 P^\sharp(\bG)(\bx') = \sum_{j=1}^3 P_j(x'_j),\end{equation} 
with the polynomial $P_j$ defined in terms of $G_j$ as above.

By a polynomial $P$ of degree $D$ with domain $\Sigma$
we mean any function with domain such that $(x_1,x_2)\mapsto P(x_1,x_2,-x_1-x_2)$
is a polynomial of degree $D$.
Introduce any norm on the vector space of all 
polynomials $P:\Sigma \to\reals$ of degrees $\le n-1$.
Combining \eqref{eq:Pgain} with \eqref{eq:use_1} and \eqref{eq:noloss}, we have established the following lemma.
\begin{lemma}
With the above hypotheses and notations,
\begin{equation}
\scriptt(\bE_\bG(s)) \le \scriptt(\bEstar) - cs^2 \norm{\Psharp(\bG)}^2 + O(s^3).
\end{equation}
\end{lemma}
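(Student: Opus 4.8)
The plan is to integrate the pointwise bounds \eqref{eq:noloss} and \eqref{eq:Pgain} over $\Sigma$, to recognize the resulting integral of symmetrized fibers as $\scriptt$ of a triple of Steiner symmetrizations and to bound it above by $\scriptt(\bEstar)$ by means of the Riesz-Sobolev inequality, and finally to replace the gain $\int_{V\cap\Sigma}\Psharp(\bG)^2\,d\lambda$ by a constant multiple of $\norm{\Psharp(\bG)}^2$ using the equivalence of norms on a finite-dimensional space.

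Concretely, I would split the integral \eqref{eq:use_1} as $\int_{V\cap\Sigma}+\int_{\Sigma\setminus V}$, with $V$ and $\eta$ as chosen above (and $V$ taken bounded and small enough that \eqref{eq:Pgain} applies on $V\cap\Sigma$ for $|s|\le\eta$), apply \eqref{eq:Pgain} on $V\cap\Sigma$, and apply the one-dimensional Riesz-Sobolev inequality \eqref{eq:noloss} on $\Sigma\setminus V$ --- the latter needing no hypothesis on the fibers, hence legitimate even at those $\bx'$ where the vertical slices of $E_j(s)$ fail to be intervals. Since $|s|\le\eta$ forces each $E_j(s)$ into a fixed bounded ball, the integrand $\bx'\mapsto\scriptt_1(I_1(x'_1,s),I_2(x'_2,s),I_3(x'_3,s))$ is supported in a fixed bounded subset of $\Sigma$ and is bounded there by \eqref{eq:trivialbound}; consequently the pointwise remainders $O(s^3)$ in \eqref{eq:Pgain}, which are uniform in $\bx',s,\bG$, integrate to a global $O(s^3)$. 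This produces
\[
\scriptt(\bE(s)) \le \int_\Sigma \scriptt_1\big(I_1(x'_1,s)^\star,I_2(x'_2,s)^\star,I_3(x'_3,s)^\star\big)\,d\lambda(\bx') - as^2\int_{V\cap\Sigma}\Psharp(\bG)(\bx')^2\,d\lambda(\bx') + O(s^3).
\]

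Next I would identify the first integral. By the Fubini identity underlying \eqref{eq:use_1}, applied not to $\bE(s)$ but to the triple $(E_1(s)^\dagger,E_2(s)^\dagger,E_3(s)^\dagger)$ of Steiner symmetrizations in the $e_d$-direction --- whose vertical slice over $x'$ is precisely the centered interval $I_j(x',s)^\star$ --- that integral equals $\scriptt(E_1(s)^\dagger,E_2(s)^\dagger,E_3(s)^\dagger)$. Steiner symmetrization preserves Lebesgue measure, and $|E_j(s)|=|B_j|$ for $|s|$ small (because $\int_{S^{d-1}}G_j\,d\sigma=0$), so $(E_j(s)^\dagger)^\star=B_j$; hence the Riesz-Sobolev inequality yields $\scriptt(E_1(s)^\dagger,E_2(s)^\dagger,E_3(s)^\dagger)\le\scriptt(B_1,B_2,B_3)=\scriptt(\bEstar)$. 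Therefore
\[
\scriptt(\bE(s)) \le \scriptt(\bEstar) - as^2\int_{V\cap\Sigma}\Psharp(\bG)(\bx')^2\,d\lambda(\bx') + O(s^3).
\]

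Finally I would compare $P\mapsto\big(\int_{V\cap\Sigma}P^2\,d\lambda\big)^{1/2}$ with the fixed norm $\norm{\,\cdot\,}$ on the finite-dimensional vector space $\mathcal P$ of polynomials on $\Sigma$ of degree at most $n-1$. This quantity is a genuine norm on $\mathcal P$: it is positive definite because $V\cap\Sigma$ is a nonempty open subset of $\Sigma$, so a polynomial that vanishes almost everywhere on it vanishes identically. Since any two norms on $\mathcal P$ are equivalent, there is $c''>0$, depending only on $V\cap\Sigma$, $n$, and $d$ (all fixed once $d,n,\rho$ are fixed), such that $\int_{V\cap\Sigma}P^2\,d\lambda\ge c''\norm{P}^2$ for every $P\in\mathcal P$. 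As $\Psharp(\bG)\in\mathcal P$ by \eqref{eq:associatedP} and \eqref{Psharpdefn}, this gives the assertion with $c=ac''$. The argument presents no real difficulty; the only points worth attention are the uniformity of the error term --- that the region of integration is effectively bounded for $|s|\le\eta$ and that the pointwise remainder in \eqref{eq:Pgain} is uniform in $\bx',s,\bG$ --- together with the positive-definiteness just noted, which is precisely what permits $\norm{\Psharp(\bG)}^2$, in place of merely $\int_{V\cap\Sigma}\Psharp(\bG)^2$, to appear in the final estimate.
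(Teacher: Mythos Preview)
Your proof is correct and follows essentially the same approach as the paper, which disposes of the lemma in a single sentence: ``Combining \eqref{eq:Pgain} with \eqref{eq:use_1} and \eqref{eq:noloss}, we have established the following lemma.'' You have carefully unpacked what that combining entails---the split into $V\cap\Sigma$ and its complement, the identification of the integrated symmetrized fibers with $\scriptt$ of the Steiner-symmetrized triple, the appeal to Riesz--Sobolev to bound the latter by $\scriptt(\bEstar)$, and the equivalence of norms on the finite-dimensional space of polynomials of degree $\le n-1$ on $\Sigma$---all of which the paper leaves implicit.
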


This does not conclude the proof, for there exist $\bG$ of degrees $n\ge 3$
for which $\bG\ne 0$ but $\Psharp(\bG)\equiv 0$.
However, for any $\scripto\in O(d)$, this reasoning can be applied to $\scripto(\bE) = (\scripto(E_j): 1\le j\le 3)$.
Write $\scripto(\bG) = (G_j\circ\scripto: 1\le j\le 3)$.
Since $\scriptt(\scripto(\bE))\equiv \scriptt(\bE)$, it suffices to prove the following result
in order to complete the proof of Lemma~\ref{lemma:quadgain}.

\begin{lemma} \label{lemma:algebra:atlast}
Let $d\ge 2$ and $n\ge 3$. 
If $\bG$ is a nonzero ordered triple of spherical harmonics of degree $n$ then there exists $\scripto\in O(d)$
such that $\Psharp(\scripto(\bG))\ne 0$.
\end{lemma}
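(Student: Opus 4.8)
The plan is to argue by contradiction. Suppose $\Psharp(\scripto(\bG))\equiv 0$ for every $\scripto\in O(d)$, and fix an index $j$ with $G_j\ne 0$. I will show that this hypothesis forces $G_j$ to be invariant under every reflection in a hyperplane through the origin, hence $O(d)$--invariant, hence a scalar multiple of $|x|^n$; since $n\ge 3$ and $G_j$ is harmonic, this forces $G_j\equiv 0$, a contradiction.

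First I would record how vanishing of $\Psharp(\scripto(\bG))$ on $\Sigma$ constrains the individual polynomials $P_i$ attached to $G_i\circ\scripto$. Each $P_i$ has degree at most $n-1$; write $Q_i$ for its homogeneous component of degree $n-1$. Parametrizing $\Sigma$ by $(x_1',x_2')\in(\reals^{d-1})^2$ with $x_3'=-x_1'-x_2'$, the homogeneous part of degree $n-1$ of the polynomial $\bx'\mapsto\sum_i P_i(x_i')$ is $Q_1(x_1')+Q_2(x_2')+Q_3(-x_1'-x_2')$. If this vanishes identically, then setting $x_2'=0$ and then $x_1'=0$ shows $Q_1=\pm Q_3$ and $Q_2=\pm Q_3$, and substituting back leaves $Q_3(x_1'+x_2')=Q_3(x_1')+Q_3(x_2')$; since additivity gives $Q_3(2x')=2Q_3(x')$ while homogeneity of degree $n-1$ gives $Q_3(2x')=2^{n-1}Q_3(x')$, and $2^{n-1}\ne 2$ because $n-1\ge 2$, we get $Q_3\equiv 0$, hence also $Q_1\equiv Q_2\equiv 0$. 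In particular, for every $\scripto$, the top-degree component $Q_j$ of the polynomial attached to $G_j\circ\scripto$ vanishes.

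Next I would convert ``$Q_j\equiv 0$ for all $\scripto$'' into a structural statement about $G_j$. Fix $\scripto$, put $H=G_j\circ\scripto$, and expand $H(x',x_d)=\sum_k c_k(x')x_d^k$ with $c_k$ homogeneous of degree $n-k$. The part $H^{\mathrm o}$ of $H$ of odd degree in $x_d$ is then $H^{\mathrm o}(x)=x_d\,R(x',x_d^2)$ where $R(x',w)=\sum_{k\text{ odd}}c_k(x')w^{(k-1)/2}$, so that $P_j(x')=r_j^{2-d-n}R(x',r_j^2-|x'|^2)$ and hence $Q_j(x')=r_j^{2-d-n}R(x',-|x'|^2)$. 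Thus $Q_j\equiv 0$ says exactly that $R(x',w)$ vanishes identically after the specialization $w\mapsto -|x'|^2$; since $w+|x'|^2$ is monic in $w$, polynomial division gives $R(x',w)=(w+|x'|^2)S(x',w)$, and therefore $H^{\mathrm o}(x)=x_d(x_d^2+|x'|^2)S(x',x_d^2)=|x|^2\cdot x_d\,S(x',x_d^2)$; that is, $|x|^2$ divides $H^{\mathrm o}$. On the other hand, writing $\sigma(x',x_d)=(x',-x_d)$, one has $H^{\mathrm o}(x)=\tfrac12\big(G_j(\scripto(x))-G_j(\scripto(\sigma(x)))\big)$, a difference of compositions of the harmonic polynomial $G_j$ with elements of $O(d)$; hence $H^{\mathrm o}$ is harmonic and homogeneous of degree $n$. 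But a harmonic homogeneous polynomial divisible by $|x|^2$ vanishes, by the decomposition of homogeneous polynomials into $\scripth_n\oplus|x|^2\mathcal P_{n-2}$ \cite{steinweiss}. So $H^{\mathrm o}\equiv 0$, which says precisely that $G_j$ is invariant under the reflection $\scripto\sigma\scripto^{-1}$ in the hyperplane orthogonal to $u=\scripto(e_d)$. Since $u$ runs over all of $S^{d-1}$ as $\scripto$ ranges over $O(d)$, $G_j$ is invariant under every reflection in a hyperplane through $0$; these generate $O(d)$, so $G_j$ is $O(d)$--invariant, whence $G_j(x)=c|x|^n$ for a scalar $c$. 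As $|x|^n$ is not a polynomial when $n$ is odd, and has nonzero Laplacian $c\,n(n+d-2)|x|^{n-2}$ when $n$ is even, in either case $c=0$, contradicting $G_j\ne 0$.

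I expect the middle step to be the main obstacle: correctly pinning down the degree-$(n-1)$ component $Q_j$ of $P_j$ as the specialization $r_j^{2-d-n}R(x',-|x'|^2)$, and then promoting the vanishing of this specialization to the genuine factorization $R=(w+|x'|^2)S$ and hence to divisibility of $H^{\mathrm o}$ by $|x|^2$. The homogeneity bookkeeping on $\Sigma$, the harmonicity of $H^{\mathrm o}$, and the structural facts about harmonic polynomials and reflections are routine.
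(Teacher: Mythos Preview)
Your argument is correct and follows a genuinely different route from the paper's. The paper first uses the functional-equation fact that $\sum_j P_j(x'_j)\equiv 0$ on $\Sigma$ forces each $P_j$ to be affine, then fixes $k$ with $G_k\ne 0$, writes $G_k$ on $S^{d-1}$ in the form $p(x')+(x'\cdot v)x_d+bx_d$ with $\deg p=n$, and constructs an explicit rotation---first in the $x'$ variables to make the $x_1^n$ coefficient of $p$ nonzero, then a small rotation in the $(x_1,x_d)$--plane---after which the associated $P_k$ acquires an $x_1^{n-1}$ term and hence fails to be affine. You instead extract only the weaker consequence that the homogeneous degree-$(n-1)$ part $Q_j$ of each $P_j$ vanishes, then recognize $Q_j$ as $r_j^{2-d-n}R(x',-|x'|^2)$ with $H^{\mathrm o}=x_dR(x',x_d^2)$, promote $Q_j\equiv 0$ to divisibility of the harmonic homogeneous polynomial $H^{\mathrm o}$ by $|x|^2$, and kill $H^{\mathrm o}$ via the direct-sum decomposition of degree-$n$ homogeneous polynomials into $\scripth_n$ and $|x|^2$ times degree-$(n-2)$ polynomials; doing this for every $\scripto$ makes $G_j$ invariant under all hyperplane reflections, hence $O(d)$--invariant, hence zero. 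Your proof is non-constructive but cleaner and more structural, locating the obstruction directly in harmonicity, whereas the paper's is constructive and perturbative. Both invoke $n\ge 3$ at a single point: the paper to ensure $\deg p>1$ so the perturbation has room to act, you to ensure $2^{n-1}\ne 2$ in the additivity step for $Q_3$.
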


It follows immediately from a simple compactness argument
that for each $n,d,\rho$, the infimum over all $\bG$ of
$\max_{\scripto\in O(d)} \norm{\Psharp(\scripto(\bG))}$
is strictly positive, where $\bG$ ranges over the set of all $3$--tuples of spherical harmonics of
common degree $n$ satisfying $\norm{\bG}=1$.

\begin{proof}[Proof of Lemma~\ref{lemma:algebra:atlast}]
If $\sum_{j=1}^3 P_j(x'_j)$ vanishes identically in a neighborhood in $\Sigma$ of $(0,0,0)$
then $P_j(x')$ must be an affine function of $x'\in\reals^{d-1}$ for each index $j$.
Therefore it suffices to show that for any $k\in\{1,2,3\}$ for which $G_k$
does not vanish identically on $\reals^d$,
there exists $\scripto\in O(d)$ such that the polynomial
$x'\mapsto P_k(x')$ associated to $G_k\circ\scripto$ via \eqref{eq:associatedP} fails to be affine.

Fix such an index $k$. 
$G_k$ has degree equal to $n$.
It is well-known that any measurable solutions $\varphi_j$ 
of the functional equation $\sum_{j=1}^3\phi_j(x'_j)\equiv 0$ on $\Sigma\cap (I_1\times I_2\times I_3)$
must be affine functions in a neighborhood of any point of the intersection of $\Sigma$
with the interior of $I_1\times I_2\times I_3)$. Therefore
if the associated polynomial $P_k:\reals^{d-1}\to\reals$ is not affine,  
then the proof is complete. 

Suppose instead that $P_k$ is affine.
By exploiting the identity $|x'|^2+x_d^2=1$ for $(x',x_d)\in S^{d-1}$ to eliminate powers of $x_d$,
one can express $G_k(x',x_d)$, as a function of $(x',x_d)\in S^{d-1}$, 
in the form $p_1(x')+x_d p_2(x')$ where $p_1,p_2$ are uniquely determined polynomials
of degrees $\le n$ and $\le n-1$, respectively.
Now according to \eqref{eq:associatedP}, $P_k(x') = r_k^{2-d-n} p_2(x')$.
Since $P_k$ is affine, this representation can be simplified to
\[G_k(x)=G_k(x',x_d) = p(x') + (x'\cdot v)x_d + bx_d \qquad \text{for $x\in S^{d-1}$} \]
where $p$ is a real-valued polynomial, $v\in\reals^{d-1}$, and $b\in\reals$.
Since $G_k$ has degree equal to $n>2$, $p$ must have degree equal to $n$.

Consider $\tilde G(x',x_d) = G_k(Tx',x_d)= p(Tx') + (Tx'\cdot v)x_d + bx_d$ 
where $T\in O(d-1)$ is chosen so that the coefficient $b$ of $x_1^n$ for $p(Tx')$ is nonzero.
Consider $\tilde G(S(x',x_d))$ where $S$ is a rotation in the $(x_1,x_d)$ plane;
$S$ preserves the coordinates $x_i$ for $2\le i<d$,
and maps $(x_1,x_d)$ to \[(\cos(\alpha)x_1+\sin(\alpha)x_d,\,-\sin(\alpha)x_1+ \cos(\alpha)x_d),\]
where $\alpha\in\reals$ is a free parameter.
Expanding $\tilde G(S(x',x_d))$ in the canonical form $p(x') + x_d q(x')$,
the monomial $x_1^{n-1}x_d$ occurs with coefficient equal to $nb\alpha + O(\alpha^2)$.
Indeed, the term $(Tx'\cdot v)x_d+bx_d$ has degree less than or equal to $2<n$,
and this upper bound is preserved by the rotation $S$. 
Therefore for all sufficiently small nonzero $\alpha$, this coefficient is nonzero.
For any such $\alpha$, the associated polynomial $P(x')$ fails to be affine. 
\end{proof}

While Lemma~\ref{lemma:algebra:atlast} does not hold for spherical harmonics of degree $n=2$, 
there is a satisfactory substitute, which yields Lemma~\ref{lemma:quadgain2}
in the same way that Lemma~\ref{lemma:algebra:atlast} established Lemma~\ref{lemma:quadgain}.
Let $P_j,\Psharp$ continue to be defined by \eqref{eq:associatedP} and by \eqref{Psharpdefn}, respectively. 

\begin{lemma} \label{lemma:algebra:atlast2}
Let $d\ge 2$. 
If $\bG$ is a nonzero ordered triple of spherical harmonics of degree $2$, and
if $G_1\equiv 0$,  then there exists $\scripto\in O(d)$
such that $\Psharp(\scripto(\bG))\ne 0$.
\end{lemma}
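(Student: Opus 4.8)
The plan is to make the polynomials $P_j$ completely explicit in the case $n=2$, reducing the statement to a fact about $O(d)$-invariance of quadratic harmonics. First I would note that for a homogeneous harmonic polynomial $G$ of degree $2$ on $\reals^d$, the part $\Gjo$ of odd degree in $x_d$ consists only of monomials in which $x_d$ appears to the first power, so $\Gjo(x',x_d)=x_d\,\ell(x')$ for a homogeneous linear form $\ell$ on $\reals^{d-1}$; hence $x_d^{-1}\Gjo(x',x_d)=\ell(x')$, and by \eqref{eq:associatedP} the polynomial associated to $G$ through a radius $r$ is the homogeneous linear form $P(x')=r^{2-d-n}\ell(x')$. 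Applying this to $G_2$ and $G_3$, and using $G_1\equiv 0$ to get $P_1\equiv 0$, I would parametrize $\Sigma$ by $(x'_2,x'_3)\in(\reals^{d-1})^2$ via $x'_1=-x'_2-x'_3$ and read off from \eqref{Psharpdefn} that $\Psharp(\bG)(\bx')=P_2(x'_2)+P_3(x'_3)$. Because $P_2$ and $P_3$ involve disjoint blocks of variables, this polynomial vanishes identically precisely when $P_2\equiv 0$ and $P_3\equiv 0$, i.e. precisely when $G_2$ and $G_3$ are even functions of $x_d$; and the same equivalence holds verbatim with $\bG$ replaced by $\scripto(\bG)$ for any $\scripto\in O(d)$.

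Next I would argue by contradiction. Suppose $\Psharp(\scripto(\bG))\equiv 0$ for every $\scripto\in O(d)$. Then for each $j\in\{2,3\}$ and each $\scripto\in O(d)$ the polynomial $x\mapsto G_j(\scripto x)$ is even in $x_d$, which is to say $G_j$ is invariant under the reflection of $\reals^d$ across the hyperplane orthogonal to $\scripto(0,\dots,0,1)$. As $\scripto$ runs over $O(d)$ this unit vector runs over all of $S^{d-1}$, so each of $G_2,G_3$ is invariant under every reflection in a hyperplane through the origin.

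To conclude, since hyperplane reflections generate $O(d)$, each $G_j$ is $O(d)$-invariant; the only $O(d)$-invariant homogeneous polynomial of degree $2$ is a scalar multiple of $|x|^2$, which is not harmonic, so $G_j\equiv 0$. (Self-containedly: writing $G_j(x)=x^{\top}A_jx$ for the symmetric trace-free matrix $A_j$, invariance under $R_u=I-2uu^{\top}$ for all unit $u$ gives $R_uA_jR_u=A_j$, and applying both sides to $u$ yields $A_ju=(u^{\top}A_ju)\,u$; thus every unit vector is an eigenvector of $A_j$, so $A_j$ is scalar, so $A_j=0$.) Hence $G_2\equiv G_3\equiv 0$, and with $G_1\equiv 0$ this contradicts the hypothesis that $\bG$ is nonzero.

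The genuinely routine parts are the computation of $\Gjo$ and the identification of $P_j$, which are immediate from the definitions. The one point that needs care — and the reason the rotation-in-a-coordinate-plane argument of Lemma~\ref{lemma:algebra:atlast} does not transfer — is that for $n=2$ the polynomials $P_j$ are already affine, so a single rotation cannot make a $P_j$ non-affine; instead one must use the hypothesis $G_1\equiv 0$, which forces $P_1\equiv 0$ and hence makes $\Psharp(\bG)\equiv 0$ equivalent to the far more rigid requirement that $G_2$ and $G_3$ each be even in $x_d$, after which averaging over the $O(d)$-orbit finishes the job. I do not expect any serious obstacle beyond organizing this reduction correctly.
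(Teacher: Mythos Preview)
Your argument is correct and follows essentially the same line as the paper's own proof: reduce to showing that if $\Psharp(\scripto(\bG))\equiv 0$ for every $\scripto$ then $G_2,G_3$ must be functions of $x'$ alone (equivalently, even in $x_d$) after every rotation, and conclude $G_2=G_3=0$. Your direct observation that $P_j$ is automatically homogeneous linear for $n=2$ slightly streamlines the paper's step ``$P_2,P_3$ constant $\Rightarrow$ $G_k=p(x')+bx_d$ $\Rightarrow$ $b=0$'', and you spell out the $O(d)$--invariance conclusion that the paper leaves as a one-line remark, but the structure is the same.
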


\begin{proof}
We follow the reasoning in the proof of Lemma~\ref{lemma:algebra:atlast}.
If $\sum_{j=1}^3 P_j(x'_j)$ vanishes identically in a neighborhood in $\Sigma$ of $(0,0,0)$,
and if $P_1\equiv 0$, then $P_2,P_3$ must be constant functions.
Therefore for $k=2,3$, $G_k$ takes the form $p(x')  + bx_d$ for some constant $b$,
where $p$ is a polynomial of degree $\le 2$. The coefficient $b$ must vanish, for otherwise 
the term $bx_d$ would be a spherical harmonic of degree $1$.
Thus $G_k$ is a function of $x'$ alone.

This can only hold for the composition of $G_k$ with an arbitrary rotation if $G_k\equiv 0$.
\end{proof}

\section{Variant inequality}

Inequality
\eqref{eq:variantRS} and the Riesz-Sobolev inequality
are quite closely related, as will be seen in the proof of Theorem~\ref{thm:variantinverse} below,
but are not merely restatements of one another. 
For $t\ge 0$ define
\begin{equation}
S_t (A,B) = \{x\in\reals^d: \one_{A}*\one_B(x)>t\}.
\end{equation}
Then for any Lebesgue measurable $A,B\subset\reals^d$ with $|A|,|B|<\infty$, 
\begin{equation}
|A|\cdot|B| = \int_{\reals^d} \one_A*\one_B = \int_0^\infty |S_t(A,B)|\,dt
\end{equation}
and
\begin{equation}
\int_{S_\tau(A.B)} \one_A*\one_B
= \tau|S_\tau(A,B)| + \int_\tau^\infty |S_t(A,B)|\,dt.
\end{equation}
Therefore
\begin{align*}
\int_{\reals^d} \min(\one_A*\one_B,\tau) 
& = \int_{\reals^d} \one_A*\one_B 
- \int_{\one_A*\one_B(x)>\tau} \big((\one_A*\one_B)(x)-\tau\big)\,dx
\\& = |A|\cdot|B| -\int_{S_\tau(A,B)} \one_A*\one_B + \tau|S_\tau(A,B)|.
\end{align*}

Thus \eqref{eq:variantRS} can be equivalently restated as
$\Psi(A,B,\tau) \le \Psi(A^\star,B^\star,\tau)$,
where
\begin{equation}
\Psi(A,B,\tau)
= \int_{S_\tau(A,B)} \one_A*\one_B - \tau|S_\tau(A,B)|.
\end{equation}
That is,
\begin{equation}
\int_{S_\tau(A,B)} \one_A*\one_B - \tau|S_\tau(A,B)|
\le \int_{S_\tau(A^\star,B^\star)} \one_{A^\star}*\one_{B^\star} - \tau|S_\tau(A^\star,B^\star)|.
\end{equation}
Compare this with the Riesz-Sobolev inequality, with $E_1=A$, $E_2=B$, and $E_3=S_\tau(A,B)$, 
which states that
\begin{equation}
\int_{S_\tau(A,B)} \one_A*\one_B 
\le \int_{S_\tau(A,B)^\star} \one_{A^\star}*\one_{B^\star}. 
\end{equation}
There are two differences in comparison to the inequality 
$\Psi(A,B,\tau) \le \Psi(A^\star,B^\star,\tau)$: There are no negative terms $-\tau|S_\tau(\cdot,\cdot)|$,
and the domain of integration
$S_\tau(A^\star,B^\star)$ is changed to $[S_\tau(A,B)]^\star$.
If $|S_\tau(A,B)| = |S_\tau(A^\star,B^\star)|$ then $(S_\tau(A,B))^\star = S_\tau(A^\star,B^\star)$
and the two inequalities become direct restatements of one another.
The relation $\int \one_A*\one_B = \int \one_{A^\star}*\one_{B^\star}$
is valid for all sets $A,B$, and
can be rewritten as $\int_0^\infty |S_t(A,B)|\,dt = \int_0^\infty |S_t(A^\star,B^\star)|\,dt$, but
there is no pointwise inequality relating 
the two quantities $|S_t(E_,E_2)|$ and  $|S_t(E_1^\star,E_2^\star)|$, 
in general. 

\begin{lemma} \label{lemma:prelimkprgt}
Let $F_j\in L^1(\reals^+)$
be nonincreasing, nonnegative functions satisfying $\int_y^\infty F_0(x)\,dx \ge \int_y^\infty F_1(x)\,dx$
for all $y\in\reals^+$.  Then for each $\tau\in\reals^+$,
\begin{equation}
\int_0^\infty \min(F_0(x),\tau)\,dx \ge \int_0^\infty \min(F_1(x),\tau)\,dx.
\end{equation}
\end{lemma}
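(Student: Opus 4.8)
The plan is to reduce the truncated integral $\int_0^\infty \min(F_j,\tau)\,dx$ to the tail integrals $G_j(y):=\int_y^\infty F_j(x)\,dx$ and then to invoke the hypothesis at a single well-chosen point. Note that each $G_j$ is finite and nonincreasing, and that the hypothesis is precisely the statement $G_0(y)\ge G_1(y)$ for every $y\in\reals^+$. We may assume $\tau>0$, the case $\tau=0$ being trivial since then both sides vanish.

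First I would record the elementary bound, valid for \emph{any} nonnegative measurable $F$ and any $y\ge 0$ (no monotonicity needed): splitting at $y$ and using $\min(F,\tau)\le \tau$ on $[0,y]$ and $\min(F,\tau)\le F$ on $[y,\infty)$ gives
\[ \int_0^\infty \min(F,\tau)\,dx \;\le\; \tau y + \int_y^\infty F(x)\,dx \;=\; \tau y + G(y). \]
Next I would observe that for the monotone function $F_0$ this bound is attained. Set $b_0=\sup\{x\in\reals^+: F_0(x)>\tau\}$, with $\sup\emptyset=0$; since $F_0\in L^1(\reals^+)$ and $\tau>0$ we have $b_0<\infty$. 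Because $F_0$ is nonincreasing, $F_0\ge\tau$ on $[0,b_0)$ and $F_0\le\tau$ on $(b_0,\infty)$, so $\min(F_0,\tau)=\tau$ a.e.\ on $[0,b_0)$ and $\min(F_0,\tau)=F_0$ a.e.\ on $(b_0,\infty)$, whence $\int_0^\infty \min(F_0,\tau)\,dx=\tau b_0+G_0(b_0)$.

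Finally, combining these two facts with the hypothesis $G_0(b_0)\ge G_1(b_0)$ yields
\[ \int_0^\infty \min(F_1,\tau)\,dx \;\le\; \tau b_0 + G_1(b_0) \;\le\; \tau b_0 + G_0(b_0) \;=\; \int_0^\infty \min(F_0,\tau)\,dx, \]
which is the claim. Equivalently one may phrase the middle step as the identity $\int_0^\infty \min(F_j,\tau)\,dx=\min_{y\ge 0}\bigl(\tau y+G_j(y)\bigr)$, the minimum being attained at $y=b_j$ because $y\mapsto \tau y+G_j(y)$ has a.e.\ derivative $\tau-F_j(y)$, which is $\le 0$ for $y<b_j$ and $\ge 0$ for $y>b_j$; then $G_0\ge G_1$ passes to the minima. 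I do not anticipate any real obstacle here: the only point needing a line of care is that integrability forces $b_0<\infty$ and that monotonicity of $F_0$ makes the truncation split cleanly at $b_0$. In particular the argument uses the hypothesis only at $y=b_0$ and uses monotonicity only of $F_0$.
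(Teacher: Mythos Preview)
Your argument is correct and genuinely different from the paper's. The paper proceeds by interpolation: after approximating by smooth strictly decreasing compactly supported functions, it sets $F(x,t)=tF_1(x)+(1-t)F_0(x)$, differentiates $\int_0^\infty\min(F(\cdot,t),\tau)$ in $t$, and observes that the derivative equals $\int_{a(t)}^\infty(F_1-F_0)$ for the unique $a(t)$ with $F(a(t),t)=\tau$, which is nonpositive by hypothesis. Your route instead exploits the variational identity $\int_0^\infty\min(F,\tau)=\min_{y\ge0}(\tau y+G(y))$: you bound $\int\min(F_1,\tau)$ above by $\tau b_0+G_1(b_0)$, replace $G_1(b_0)$ by $G_0(b_0)$ via the hypothesis at the single point $b_0$, and recognize the result as exactly $\int\min(F_0,\tau)$. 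This is shorter, avoids the approximation step entirely, and as you note actually proves more: monotonicity of $F_1$ is never used, and the tail-integral hypothesis is invoked only at $y=b_0$. The paper's approach, on the other hand, makes the monotonicity in $t$ manifest and would extend more readily to situations where one wants to track how the functional varies along a one-parameter family.
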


\begin{proof}
Via simple approximation and limiting arguments we can reduce to the case in which $F_0,F_1$ vanish
outside of some bounded interval, belong to $C^1([0,\infty))$, are strictly decreasing with strictly
negative derivatives where they are nonzero, and satisfy $\sup_x F_j(x)>\tau > 0$.
For $t\in[0,1]$ consider $F(x,t) = tF_1(x)+(1-t)F_0(x)$. It suffices to show that
$\int_0^\infty \min(F(x,t),\tau) \,dx$ is a nonincreasing function of $t$.

Our hypotheses on $F_0,F_1$ guarantee that
for each $t$ there exists a unique $a(t)\in\reals^+$ satisfying $F(a(t),t)=\tau$, 
and that $a$ is a differentiable function of $t$.
Then \[\int_0^\infty \min(F(x,t),\tau)\,dx = 
\int_0^{a(t)} \tau\,dx + \int_{a(t)}^\infty F(x,t)\,dx\] 
and consequently
\begin{align*}
\frac{d}{dt} \int_0^\infty \min(F(x,t),\tau)\,dx
& = \tau a'(t) -\tau a'(t) + \int_{a(t)}^\infty \frac{\partial F(x,t)}{\partial t}\,dx
\\& =  \int_{a(t)}^\infty (F_1(x)-F_0(x)) \,dx
\\& \le 0.
\end{align*}
\end{proof}

\begin{proof}[Proof of Theorem~\ref{thm:RSvariant}]
Let $F_0:\reals^+\to[0,\infty)$ be right continuous, nonincreasing,
and satisfy $|\{y\in\reals^+: F_0(y)>t\}| = |\{x\in\reals^d: \one_A*\one_B(x)>t\}|$
for all $t\in[0,\infty)$.
Let $F_1$ be associated to $\one_{A^\star}*\one_{B^\star}$ in the same way.
The Riesz-Sobolev inequality states that $\int_0^x F_0\le \int_0^x F_1$ for all $x\in[0,\infty)$.
Since $\int_0^\infty F_0 = |A|\cdot|B| = |A^\star|\cdot|B^\star|=\int_0^\infty F_1$,
this can be equivalently restated as $\int_x^\infty F_0\ge \int_x^\infty F_1$
for all $x\in\reals^+$. Moreover,
$\int_0^\infty \min(F_0,\tau) = \int_{\reals^d} \min(\one_A*\one_B,\tau)$,
with a corresponding identity for $F_1$.
Therefore an application of Lemma~\ref{lemma:prelimkprgt} yields the conclusion of the theorem.
\end{proof}

\begin{lemma}\label{lemma:flowforvariant}
Let $A,B\subset\reals^1$ be Lebesgue measurable sets with finite Lebesgue measures.
Let $A(s),B(s)$ be their flows, as described in Proposition~\ref{prop:RSflow}.
For any $\tau\in\reals^+$,
$\int_{\reals} \min\big(\one_{A(s)}*\one_{B(s)},\,\tau\big)$
is a nonincreasing continuous function of $s\in[0,1]$.
\end{lemma}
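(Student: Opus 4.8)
The plan is to prove the two assertions—continuity and monotonicity—separately. Throughout, write $h_s=\one_{A(s)}*\one_{B(s)}$; by the measure-preserving property (item (2) of Proposition~\ref{prop:RSflow}) one has $h_s\ge 0$ and $\int_{\reals}h_s=|A|\cdot|B|$ for every $s\in[0,1]$, so in particular $\min(h_s,\tau)\in L^1(\reals)$. For continuity, decompose $h_s-h_t=\one_{A(s)}*(\one_{B(s)}-\one_{B(t)})+(\one_{A(s)}-\one_{A(t)})*\one_{B(t)}$ and apply Young's inequality to obtain $\norm{h_s-h_t}_{L^1}\le |A|\cdot|B(s)\symdif B(t)|+|A(s)\symdif A(t)|\cdot|B|$, which tends to $0$ as $s\to t$ by the continuity property (item (3)). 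Since $x\mapsto\min(x,\tau)$ is $1$-Lipschitz, $\bigl|\int_{\reals}\min(h_s,\tau)-\int_{\reals}\min(h_t,\tau)\bigr|\le\norm{h_s-h_t}_{L^1}\to 0$.

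For monotonicity, fix $0\le s\le t\le 1$ and let $G_s:\reals^+\to[0,\infty)$ denote the nonincreasing right-continuous rearrangement of $h_s$. Being equimeasurable with $h_s$, it satisfies $\int_0^\infty G_s=|A|\cdot|B|$ and $\int_{\reals}\min(h_s,\tau)=\int_0^\infty\min(G_s,\tau)$, and similarly for $t$. By Lemma~\ref{lemma:prelimkprgt} with $F_0=G_s$ and $F_1=G_t$, it suffices to show that $\int_y^\infty G_s\ge\int_y^\infty G_t$ for every $y\in\reals^+$; since the total integrals agree, this is equivalent to $\int_0^y G_s\le\int_0^y G_t$ for all $y$. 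By the Hardy--Littlewood inequality and the bathtub principle, $\int_0^y G_s=\max\{\int_S h_s:\ S\subset\reals^1\text{ measurable},\ |S|=y\}$, the maximum being attained by a superlevel set of $h_s$ (together with part of one level set, if necessary).

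Now fix such a maximizing set $S$ and apply the flow of Proposition~\ref{prop:RSflow} to the ordered triple $(A(s),B(s),-S)$. By the definition of $\scriptt$, $\scriptt(A(s),B(s),-S)=\int_{S}h_s=\int_0^y G_s$. By the functional monotonicity property (item (7)), for every $u\in[0,1]$
\begin{equation*}
\scriptt(A(s),B(s),-S)\le\scriptt\bigl((A(s))(u),\,(B(s))(u),\,(-S)(u)\bigr).
\end{equation*}
Choosing $u=\tfrac{t-s}{1-s}\in[0,1]$ and invoking the independence-of-past-history property (item (6)) yields $(A(s))(u)=A(t)$, $(B(s))(u)=B(t)$, and $|(-S)(u)|=|S|=y$. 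Therefore
\begin{equation*}
\int_0^y G_s=\scriptt(A(s),B(s),-S)\le\scriptt\bigl(A(t),B(t),(-S)(u)\bigr)=\int_{-(-S)(u)}h_t\le\int_0^y G_t,
\end{equation*}
the last inequality again by the bathtub principle applied to $h_t$, since $-(-S)(u)$ has measure $y$. This is the required inequality.

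I expect the only genuinely delicate point to be the bookkeeping in the last step: recognizing that the way to transfer the monotonicity of $\scriptt$ to the quantity $\int_{\reals}\min(h_s,\tau)$ is to place the relevant superlevel set of $h_s$ into the third slot of the triple and flow that triple, and that the independence-of-past-history property is precisely what allows the flow restarted at time $s$ to be identified with the original flow—so that the conclusion holds for arbitrary $0\le s\le t\le 1$, not merely for $s=0$. The remaining ingredients are standard rearrangement theory together with the properties of the flow already recorded in Proposition~\ref{prop:RSflow} and Lemma~\ref{lemma:prelimkprgt}.
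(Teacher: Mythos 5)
Your proof is correct and follows essentially the same route as the paper's: reduce via Lemma~\ref{lemma:prelimkprgt} to showing $\int_0^y G_s \le \int_0^y G_t$, identify this partial integral with the supremum of $\int_S h_s$ over sets of measure $y$, and flow the maximizing set alongside $A(s),B(s)$ using the functional monotonicity of $\scriptt$. The only cosmetic difference is that you treat general $s\le t$ directly via the independence-of-past-history property, whereas the paper first reduces to $s=0$; these are equivalent.
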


\begin{proof}
Continuity is easy, since $t\mapsto \min(t,\tau)$ is a Lipschitz function
and $s\mapsto \one_{A(s)}*\one_{B(s)}$ is a continuous mapping from $[0,1]$ 
to $L^1(\reals^d)$ by Proposition~\ref{prop:RSflow}.

Define $F_s:(0,\infty)\to[0,\infty)$ to be the unique nonincreasing right continuous function
that satisfies \[ |\{x\in\reals^+: F_s(x)>u\}| = |\{y\in\reals: (\one_{A(s)}*\one_{B(s)})(y)>u\}|
\ \text{ for almost every $u\in\reals^+$.}\]
We claim that whenever $s_0\le s_1$, $\int_0^x F_{s_0}(y)\,dy \le \int_0^x F_{s_1}(y)\,dy$.
It suffices to prove this for $s_0=0$. 
Observe that 
for any $s\in[0,1]$ and any $x\in\reals^+$, $\int_0^x F_s(y)\,dy$ is equal to 
the supremum of $\int_E \one_{A(s)}*\one_{B(s)}$,
with the supremum taken over all $E\subset\reals$ satisfying $|E|=x$. 
This supremum is attained.
Choose $E$ so that 
$\int_E \one_{A}*\one_{B} = \int_0^x F_0(y)\,dy$,
and consider the flow $s\mapsto E(s)$ and the associated expression
$\Psi(s) = \int_{E(s)} \one_{A(s)}*\one_{B(s)}$.
According to Proposition~\ref{prop:RSflow}, $\Psi$ is a nondecreasing function.
But
\begin{multline*} \int_0^x F_0(y)\,dy = 
\int_{E(0)} \one_{A(0)}*\one_{B(0)}
\le \int_{E(s)} \one_{A(s)}*\one_{B(s)} 
\\
\le \sup_{|\tilde E|=|E(s)|} \int_{\tilde E} \one_{A(s)}*\one_{B(s)}
= \int_0^x F_s(y)\,dy\end{multline*}
since $|E(s)|\equiv |E|$.

For any $s$, $\int_{\reals^+} F_s = |A(s)|\cdot |B(s)| = |A|\cdot|B|$.
Therefore since
$|A(s_0)|\cdot|B(s_0)| = |A(s_1)|\cdot|B(s_1)|$, 
the inequality $\int_0^x F_{s_0} \le \int_0^x F_{s_1}$ for all $x$,
can be rewritten as $\int_x^\infty F_{s_0} \ge \int_x^\infty F_{s_1}$ for all $x$.
Therefore an application of Lemma~\ref{lemma:prelimkprgt} completes the proof.
\end{proof}

\begin{proof}[Proof of Theorem~\ref{thm:variantinverse}]
By virtue of the continuity and monotonicity of the functional $\int \min(\one_{A(s)}*\one_{B(s)},\,\tau)$
discussed in Lemma~\ref{lemma:flowforvariant}, 
together with its affine invariance, 
we may reduce matters, as in the proof of Theorem~\ref{thm:RSsharpenedscriptt},
to the small perturbation case, in which $\Dist((A,B),\scripto(A^\star,B^\star))$ is much
less than $\max(|A|,|B|)$.
By making a suitable measure-preserving affine change of variables we may
reduce to the case in which \[\max(|A\symdif A^\star|,\,|B\symdif B^\star|)
\le 2\Dist((A,B),\scripto(A^\star,B^\star)).\]

Write
\[\int \min(\one_A*\one_B,\,\tau)
= |A|\cdot|B| +\tau|S_\tau(A,B)| - \int_{S_\tau(A,B)} \one_A*\one_B. \]
There is a corresponding identity for 
$\int \min(\one_{A^\star}*\one_{B^\star},\,\tau)$, 
and $|A|\cdot|B| = |A^\star|\cdot|B^\star|$. 
Let $S = S_\tau(A,B)$ and $S^\sharp = S_\tau(A^\star,B^\star)$.
Therefore we seek to bound 
$\int_{S}\one_A*\one_B - \tau|S|$
by 
$\int_{S^\sharp}\one_{A^\star}*\one_{B^\star} - \tau|S^\sharp|$,
minus a suitable nonnegative term.

One has
\begin{equation}
\int_{S^*} 
\one_{A^\star}*\one_{B^\star}
\le \int_{S^\sharp}
\one_{A^\star}*\one_{B^\star}
- \tau(|S^\sharp|-|S|) 
\end{equation}
in general,
and
\begin{equation}
\int_{S^*} 
\one_{A^\star}*\one_{B^\star}
\le \int_{S^\sharp}
\one_{A^\star}*\one_{B^\star}
- \tau(|S^\sharp|-|S|) 
- c(|S^\sharp|-|S|)^2
\end{equation}
for ordered triples $(A,B,S)$ in the strictly admissible range. 

From the elementary uniform bound
\[ \norm{\,\one_A*\one_B - \one_{A^\star}*\one_{B^\star}\, }_{L^\infty}
\le |A\symdif A^\star|\cdot|B| + |A|\cdot |B\symdif B^\star|\]
and the assumption that $\max(|A\symdif A^\star|,|B\symdif B^\star|)\ll \max(|A|,|B|)$
it follows that \[\big| S_\tau(A,B)\symdif S_\tau(A^\star,B^\star)\big|\ll \max(|A|,|B|).\]
Therefore the ordered triple $(A,B,S_\tau(A,B))$ is $\varrho$--strictly admissible, where
$\varrho>0$ depends only on the parameters in the hypotheses of Theorem~\ref{thm:variantinverse}.

Therefore the Riesz-Sobolev inequality in the form \eqref{eq:RSsharpened2} can be invoked to obtain
\begin{align*}
\int_S \one_A*\one_B - \tau|S|
& \le
\int_{S^*} \one_{A^\star}*\one_{B^\star} -\tau |S|
-c\Dist((A,B),\scripto(A^\star,B^\star))^2
\\ &\le
\int_{S^\sharp} \one_{A^\star}*\one_{B^\star} 
- \tau(|S^\sharp|-|S|) 
- c(|S^\sharp|-|S|)^2
-\tau |S|
\\& \qquad\qquad
-c\Dist((A,B),\scripto(A^\star,B^\star))^2
\\&
=
\int_{S^\sharp} \one_{A^\star}*\one_{B^\star} 
- \tau|S^\sharp|
- c(|S^\sharp|-|S|)^2
-c\Dist((A,B),\scripto(A^\star,B^\star))^2.
\end{align*}
This completes the proof of Theorem~\ref{thm:variantinverse},
as well as of the formally sharper form, Theorem~\ref{thm:variantinverse2}.
\end{proof}

\section{A property of the flow} \label{section:flowtointervals}

Here we prove Proposition~\ref{prop:flowtointervals},
which states that for any $t>0$, $E(t)$ equals a union of intervals,
up to a Lebesgue null set.

The flow of $E$ can be regarded as a flow $(t,x)\mapsto x(t)\in E(t)$
of the points $x\in E$, in the following natural way.
Firstly, define $\phi_E:E\to E^\star$ by 
\[\phi_E(x) = |E\cap(-\infty,x]| - \tfrac12|E|.\]
$\phi_E$ is a nondecreasing function, and
$|\phi_E(E\cap I)| = |E\cap I|$ for every interval $I$.
Secondly, define $\tilde\phi_E: (-\tfrac12|E|,\tfrac12|E|)\to \reals$
by \[\tilde\phi(x)=y\in\reals\] where
$y$ is the smallest element of $\reals$ satisfying
$|E\cap(-\infty,y]|=y+\tfrac12 |E|$.
$\tilde\phi_E$ is a nondecreasing Lebesgue measure-preserving function. 
It is a consequence of the Lebesgue density theorem
that for almost every $x\in E$, the only point $y\in\reals$
satisfying $|E\cap(-\infty,y]| = |E\cap(-\infty,x]|$
is $y=x$ itself. Therefore $\tilde\phi_E(\phi_E(x))=x$ for almost every $x\in E$, 
and $|E\symdif \tilde\phi_E(E^\star)|=0$.


For each $t\in[0,1]$ let $\phi_{E(t)}:E(t)\to E(t)^\star=E^\star$ 
be defined in this way.
Set $\psi_E(t)=
\tilde\phi_{E(t)}\circ\phi_E: E\to E(t)$. This
is a well-defined nondecreasing function, which preserves
Lebesgue measure of Borel sets.
The mapping $E\owns x \mapsto \psi_E(t)(x)$
defines the desired flow on the underlying points of $E$.

The next lemma states that 
if $I$ is a bounded interval, and if $E\cap I$ is sufficiently
dense in $I$, then $\Psi_t(E)$ contains an interval of length
comparable to $I$, for all $t>0$ that are not too small.

\begin{lemma} \label{lemma:compression}
Let $[a,b]\subset[0,\infty)$ be a closed bounded interval of positive length.
Let $E\subset \reals$ be a Lebesgue measurable set satisfying $0<|E|<\infty$.
Let $\delta\in[0,\tfrac12)$.
Suppose that $|E\cap I| \ge (1-\delta)|I|$.
Then for every $T > 2\delta(1-2\delta)^{-1}$, the set 
$\psi_t(E\cap I)$ is an interval.
\end{lemma}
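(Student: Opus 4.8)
\emph{Plan.} First unwind what $\psi_t(E\cap I)$ is. Write $I=[a,b]$, $m_-=|E\cap(-\infty,a]|$, $m_+=|E\cap(-\infty,b]|$, so that $m_+-m_-=|E\cap I|\ge(1-\delta)|I|$. The function $\phi_E$ is nondecreasing, continuous, measure preserving on $E$, and constant on every gap of $E$; hence $\phi_E(E\cap I)$ coincides, up to a null set, with the closed interval $J=[\alpha,\beta]$ where $\alpha=\phi_E(\inf(E\cap I))$, $\beta=\phi_E(\sup(E\cap I))$, and $|J|=|E\cap I|$. Since $\phi_{E(t)}\circ\tilde\phi_{E(t)}$ is the identity of $E^\star$, the flow preserves rank, $\phi_{E(t)}(\psi_E(t)(x))=\phi_E(x)$, so $\psi_t(E\cap I)=\tilde\phi_{E(t)}(J)$ equals, up to a null set, the ``middle slab'' $M(t):=E(t)\cap[a'(t),b'(t)]$ with $a'(t)=\tilde\phi_{E(t)}(\alpha)$ and $b'(t)=\tilde\phi_{E(t)}(\beta)$; equivalently $a'(t),b'(t)$ are the points of $E(t)$ with $|E(t)\cap(-\infty,a'(t)]|=m_-$ and $|E(t)\cap(-\infty,b'(t)]|=m_+$. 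Because $|M(t)|=|E\cap I|$ and $M(t)\subseteq[a'(t),b'(t)]$ for every $t$, the set $\psi_t(E\cap I)$ is (equivalent to) an interval exactly when $b'(t)-a'(t)=|E\cap I|$, i.e.\ exactly when $[a'(t),b'(t)]\setminus E(t)$ is null.

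\emph{The relevant quantity.} Put $g(t):=\bigl(b'(t)-a'(t)\bigr)-|E\cap I|\ge 0$; this is the total length of those gaps of $E(t)$ whose rank lies strictly between $\alpha$ and $\beta$. At $t=0$ one has $[a'(0),b'(0)]=[\inf(E\cap I),\sup(E\cap I)]\subseteq I$, hence $g(0)\le|I|-|E\cap I|\le\delta|I|$, while $b'(t)-a'(t)\ge|E\cap I|\ge(1-\delta)|I|$ for all $t$. It therefore remains to show that $g(t)=0$ once $t>2\delta(1-2\delta)^{-1}$, i.e.\ that by that time the flow has filled in every gap of $E(t)$ lying, in rank, strictly between the two slab boundaries.

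\emph{Reduction and the quantitative heart.} By the continuity and contractivity of the flow, $a'(t)$ and $b'(t)$, hence $g(t)$, depend continuously and stably on $E$; if $E_n\to E$ in $L^1$ with $E_n$ a finite union of intervals, then (once the first two steps are granted for $E_n$) $\psi_t(E_n\cap I)$ is a genuine interval whose endpoints converge to $a'(t),b'(t)$, so its $L^1$-limit $\psi_t(E\cap I)$ is again an interval. Thus it suffices to take $E$ a finite union of closed bounded intervals, for which the flow of \cite{BLL},\cite{liebloss} is given by an explicit rule. One then follows each gap of $E(t)$ along the flow: the part of $E(t)$ occupying ranks in $[\alpha,\beta]$ keeps the fixed mass $|E\cap I|\ge(1-\delta)|I|$ while its convex hull has length at most $|I|$ at time $0$, so the total ``excess'' hull length available to the interior gaps is at most $\delta|I|$, and the explicit dynamics force the components bounding these gaps to move inward, consuming this excess at a definite rate. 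Carrying out this computation shows that every such gap has length $0$ for $t>2\delta(1-2\delta)^{-1}$. (The hypothesis $[a,b]\subset[0,\infty)$ enters here: it places the whole slab on one side of the origin, which is what guarantees the monotone inward motion and, with it, the sharp threshold.) Combining this with the first two steps proves the lemma.

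The main obstacle I anticipate is precisely this last quantitative point: extracting from the explicit one-dimensional rearrangement flow the exact rate at which a gap interior to a $(1-\delta)$-dense window is closed, and verifying that the sharp value of the threshold is $2\delta(1-2\delta)^{-1}$ rather than merely a bound of order $\delta$. The remainder is bookkeeping with the rank functions $\phi_E,\tilde\phi_E$.
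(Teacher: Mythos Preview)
Your framework is correct and matches the paper's approach: reduce to $E$ a finite union of closed intervals, track the flow images $a'(t),b'(t)$ of the boundary ranks (the paper writes these as $a^-(t),a^+(t)$), and show that $[a'(t),b'(t)]\subset E(t)$ once $t$ exceeds the stated threshold. Your bookkeeping with $\phi_E,\tilde\phi_E$ is in fact more careful than the paper's. But you have not proved the lemma: the sentence ``Carrying out this computation shows that every such gap has length $0$ for $t>2\delta(1-2\delta)^{-1}$'' is precisely where the entire content lies, and you yourself flag it as the main unresolved obstacle. What remains is not bookkeeping; it is the whole argument.

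Here is the missing idea. For $E$ a finite union of intervals the flow obeys the pointwise rule $dx(t)/dt=-c(t)$, where $c(t)$ is the center of the component of $E(t)$ containing $x(t)$. Let $2\eta^-(t)|I|$ denote the length of the portion of that component lying to the right of $a^-(t)$, and define $\eta^+(t)$ symmetrically at $a^+(t)$. As long as $\eta^-(t)+\eta^+(t)<\tfrac12$ (so the two boundary components remain distinct), one has $c^-(t)\le a^-(t)+\eta^-(t)|I|$ and $c^+(t)\ge a^+(t)-\eta^+(t)|I|$, whence
\[
\frac{d}{dt}\bigl(a^+(t)-a^-(t)\bigr)=c^-(t)-c^+(t)\le -(a^+(t)-a^-(t))+(\eta^-(t)+\eta^+(t))|I|\le\bigl(-\tfrac12+\delta\bigr)|I|,
\]
using $a^+(t)-a^-(t)\ge|E\cap I|\ge(1-\delta)|I|$. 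Integrating from $0$ to $T$ with the initial value $a^+(0)-a^-(0)\le|I|$ and comparing against the same lower bound $(1-\delta)|I|$ forces $T\le 2\delta(1-2\delta)^{-1}$. Hence for any larger $T$ one must have $\eta^-(T)+\eta^+(T)\ge\tfrac12$, which says exactly that $[a^-(T),a^+(T)]\subset E(T)$. This differential inequality, not a generic ``gaps close at a definite rate'' statement, is what produces the sharp threshold; your proposal does not contain it. (Incidentally, the paper's argument does not visibly use the hypothesis $I\subset[0,\infty)$ in the way you suggest; the center bounds above are purely local and sign-independent.)
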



We will prove Lemma~\ref{lemma:compression}
in the special case in which $E$ is a finite union of closed intervals.
In that case, the mapping $t\mapsto x(t)$ is continuous
and is almost everywhere differentiable for almost every $x\in E$.
Then for each $x\in E$, for each $t$ let $c(t)$ be the center of the
largest interval that is contained in $E(t)$, and contains $x(t)$.
Then for almost every $t$, $dx(t)/dt=-c(t)$.

\begin{proof}[Proof of Lemma~\ref{lemma:compression}]
Write $I =  [a^-,a^+]$ 
and $|I| = a^+-a^-$.
Consider $a^-(t),a^+(t)\in E(t)$.
Define $\eta^-(t)$ to be the supremum of all $\eta\ge0$ such that
\[[a^-(t),a^-(t)+2\eta |I|] \subset E(t).\]
Likewise
define $\eta^+(t)$ to be the supremum of all $\eta \ge 0$ such that
\[[a^+(t)-2\eta|I|,a^+(t)]\subset E(t).\]
$\eta^\pm(t)$ are nondecreasing functions of $t \in[0,1]$.

Let $c^\pm(t)$ be the centers of the largest intervals contained in $E(t)$
that contain $a^\pm(t)$, respectively.
Provided that these two intervals are disjoint,
\[ c^-(t)\le a^-(t)+\eta^-(t) \text{ and }  c^+(t)\ge a^+(t)-\eta^+(t).\]
Moreover, for almost every $t$,
$da^\pm(t)/dt$ exists and satisfies
\[\frac{d}{dt} a^\pm(t) = -c^\pm(t).\]

Let $T>0$ and suppose that $\eta^+(T)+\eta^-(T)< \tfrac12$,
and consequently  the two intervals $[a^-(t),a^-(t)+\eta^-(t)|I|]$
and $[a^+(t),a^+(t)-\eta^-(t)|I|$ are disjoint for each $t\in[0,T]$. 

Since $a^+(t)-a^-(t)\ge |E\cap I| \ge (1-\delta)|I|$,
\begin{align*}
\frac{d}{dt}(a^+(t)-a^-(t))
&= c^-(t)-c^+(t)
\\&
\le \big[a^-(t)+ \eta^-(t)|I|\big] - \big[a^+(t)- \eta^+(t)|I|\big]
\\&
\le \big[-(1-\delta)+(\eta^+(t)+\eta^-(t)\big] |I|
\\&
\le (-\tfrac12 +\delta)|I| 
\end{align*}
for almost every $t\in[0,T]$.
Integrating over $t$ and using the initial condition $a^+(0)-a^-(0)=a^+-a^-=|I|$ gives
\[ a^+(T)-a^-(T)
\le \big[1 +(-\tfrac12+\delta)T\big] |I|.\]
Combining this inequality with the constraint
$a^+(T)-a^-(T) \ge  (1-\delta)|I|$ gives
\[ 1-\delta \le 1-T(\tfrac12-\delta),\]
that is,
$T\le 2\delta(1-2\delta)^{-1}$.

Now consider any $\tau$ strictly greater than $2\delta(1-2\delta)^{-1}$.
The hypothesis $\eta^+(\tau)+\eta^-(\tau) < \tfrac12 $ underlying the above reasoning cannot hold,
since the conclusion does not.
Thus $\eta^+(\tau)+\eta^-(\tau)\ge\tfrac12$.
Therefore the interval $[a^-(\tau),a^+(\tau)]$ is contained in $E(\tau)$, up to a Lebesgue null set.
\end{proof}

Proposition~\ref{prop:flowtointervals} is an immediate corollary of the next lemma.

\begin{lemma}
Let $E\subset\reals$ be a Lebesgue measurable set satisfying $0<|E|<\infty$.
Let $\eps>0$.
There exists $t<\eps$ such that $E(t)$ can be expressed as a countable
union of intervals, together with a set of Lebesgue measure less than $\eps$.
\end{lemma}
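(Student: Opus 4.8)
The plan is to transport, at a suitably small time $t$, a high-density decomposition of $E$ into intervals through the point flow $\psi_t=\psi_E(t)$ constructed above, and to invoke Lemma~\ref{lemma:compression} to see that each transported piece has become a genuine interval; the exceptional set of measure $<\eps$ will be the $\psi_t$-image of a leftover subset of $E$ of small measure. We may clearly assume $\eps\in(0,1)$.

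First I would fix $\delta\in(0,\tfrac14)$ small enough that $2\delta(1-2\delta)^{-1}<\eps$ (possible since the left side tends to $0$ as $\delta\to0$), and choose any $t$ with $2\delta(1-2\delta)^{-1}<t<\eps$. By the Lebesgue density theorem almost every point of $E$ has density $1$, and any density point $x\in E$ with $x\neq0$ lies in closed nondegenerate intervals $I$ of arbitrarily small length, contained in $(0,\infty)$ or in $(-\infty,0)$ according to the sign of $x$, with $|E\cap I|\ge(1-\delta)|I|$. These intervals form a Vitali cover of $E$ up to a null set, so the Vitali covering theorem supplies pairwise disjoint such intervals $\{I_n\}$ with $\big|E\setminus\bigcup_n I_n\big|=0$. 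Since $\sum_n|I_n|\le(1-\delta)^{-1}|E|<\infty$ and $|E|<\infty$, the measures $\big|E\setminus\bigcup_{n\le M}I_n\big|$ decrease to $0$; I would fix $M$ with $|N_0|<\eps/2$, where $N_0:=E\setminus\bigcup_{n\le M}I_n$.

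Next I would apply Lemma~\ref{lemma:compression}. For each $n\le M$ with $I_n\subset(0,\infty)$ the hypotheses $|E\cap I_n|\ge(1-\delta)|I_n|$ and $t>2\delta(1-2\delta)^{-1}$ give that $\psi_t(E\cap I_n)$ is an interval; for $I_n\subset(-\infty,0)$ the same conclusion follows by applying the lemma to $-E$ and $-I_n\subset(0,\infty)$, since the flow, hence $\psi_t$, commutes with the reflection $x\mapsto-x$. Recall that $\psi_t:E\to E(t)$ is nondecreasing, preserves the Lebesgue measure of Borel sets, and maps $E$ onto $E(t)$ up to a null set; being monotone and measure preserving it is, modulo null sets, a bijection of $E$ onto $E(t)$ that carries pairwise disjoint sets to pairwise disjoint sets of the same measure. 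Writing $E=\bigcup_{n\le M}(E\cap I_n)\cup N_0$ as a disjoint union and applying $\psi_t$ therefore yields, up to a null set,
\[
E(t)=\Big(\bigcup_{n\le M}\psi_t(E\cap I_n)\Big)\cup\psi_t(N_0),
\]
a union of $M$ intervals together with the set $\psi_t(N_0)$, which has measure $|N_0|<\eps/2<\eps$; and $t<\eps$, as required. (The same argument run with the whole sequence $\{I_n\}$ expresses $E(t)$, up to a null set, as a countable union of intervals, which is the form from which Proposition~\ref{prop:flowtointervals} is obtained.)

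The only genuinely substantive step is the passage through Lemma~\ref{lemma:compression}: monotonicity and measure preservation of $\psi_t$ alone would only tell us that $\psi_t(E\cap I_n)$ is an ``increasing'' measurable set of the correct measure, possibly riddled with gaps, and it is exactly the compression estimate---available once the elapsed time exceeds the threshold $2\delta(1-2\delta)^{-1}$---that forces this image to be gap-free. The remaining ingredients (choosing $\delta$ small compared to $\eps$, the finite Vitali selection, and the reflection bookkeeping needed to treat intervals to the left of the origin) are routine, so I do not foresee further obstacles.
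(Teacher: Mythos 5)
Your proof is correct and follows essentially the same route as the paper: a Vitali/Lebesgue-density selection of high-density intervals, Lemma~\ref{lemma:compression} applied to each, and transport of the decomposition through the measure-preserving point map $\psi_t$, with the leftover set supplying the small exceptional piece. In fact your version is slightly more careful than the paper's on one point: you decouple the density parameter $\delta$ from $\eps$ so that the threshold time $2\delta(1-2\delta)^{-1}$ actually falls below $\eps$, which is needed for the stated conclusion $t<\eps$.
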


\begin{proof}
If $E(\tau)$ can be expressed as a union of countably
many intervals together with a set of Lebesgue measure less than $\eps$
for some $\tau>0$, then the same holds for $E(t)$, for every $t>\tau$.

According to the Lebesgue density theorem,
there exist a collection of pairwise disjoint bounded intervals $I_j$
and a subset $E'\subset E$ such that
$|E'|<\eps$,
$E'\cap I_j=\emptyset$ for each index $j$,
and $|E\cap I_j| \ge (1-\eps)|I_j|$ for every $j$.
According to Lemma~\ref{lemma:compression},
$\psi_{E,t}(E\cap I_j)$ is an interval for each index $j$,
for every $t > 2\eps(1-2\eps)^{-1}$. 
Moreover, $|\psi_{E,t}(E')| = |E'|<\eps$.
Therefore $\psi_{E,t}(E\setminus E')$ can be expressed as a countable union of intervals.
\end{proof}

\end{document}